\newtheorem{theorem}{Theorem}[section] 
\newtheorem{lemma}[theorem]{Lemma}     
\newtheorem{corollary}[theorem]{Corollary}
\newtheorem{proposition}[theorem]{Proposition}
\newtheorem{definition}{Definition}
\newtheorem{construction}{Construction}
\newtheorem{remark}{Remark}
\newcommand{\rd}{\,\mathrm{d}}
\newcommand{\bsb}{\boldsymbol{b}}
\newcommand{\bsx}{\boldsymbol{x}}
\newcommand{\bsy}{\boldsymbol{y}}
\newcommand{\bsz}{\boldsymbol{z}}
\newcommand{\bsell}{\boldsymbol{\ell}}
\newcommand{\bsk}{\boldsymbol{k}}
\newcommand{\bsa}{\boldsymbol{a}}
\newcommand{\bssigma}{\boldsymbol{\sigma}}
\newcommand{\bstheta}{\boldsymbol{\theta}}
\newcommand{\bszero}{\boldsymbol{0}}
\newcommand{\bsone}{\boldsymbol{1}}
\newcommand{\nat}{\mathbb{N}}
\newcommand{\LL}{{\cal L}}
\newcommand{\FF}{\mathbb{F}}
\newcommand{\NN}{\mathbb{N}}
\newcommand{\Mod}{\operatorname{mod}}
\newcommand{\mypmod}[1]{\,(\Mod\,#1)}
\newcommand{\wal}{\mathrm{wal}}
\newcommand{\cP}{\mathcal{P}}
\newcommand{\cS}{\mathcal{S}}
\newenvironment{proof}{\begin{trivlist}
    \item[\hskip\labelsep{\it Proof.}]}{$\hfill\Box$\end{trivlist}}
\newcommand{\satop}[2]{\stackrel{\scriptstyle{#1}}{\scriptstyle{#2}}}
\title{Discrepancy bounds for infinite-dimensional order two digital sequences over $\mathbb{F}_2$}
\author{J. Dick}
\date{}
\begin{document}
\maketitle

\begin{abstract}
We provide explicit constructions of infinite-dimensional digital sequences $\mathcal{S} = (\boldsymbol{x}_0, \boldsymbol{x}_1, \ldots, ) \subset [0,1]^{\mathbb{N}}$, which are constructed over the finite field $\mathbb{F}_2$, whose projection onto the first $s$ coordinates $\boldsymbol{x}_0^{(s)}, \boldsymbol{x}_{1}^{(s)}, \ldots$ for all $s \ge 1$, has $\mathcal{L}_q$ discrepancy bounded by
\begin{equation*}
\mathcal{L}_q(\{\boldsymbol{x}^{(s)}_0, \boldsymbol{x}^{(s)}_1, \ldots, \boldsymbol{x}^{(s)}_{N-1}\} ) \le C_{q,s} \frac{r^{3/2-1/q} }{N} \sqrt{ \sum_{v=1}^r m_v^{s-1} }
\end{equation*}
for all $N = 2^{m_1} + 2^{m_2} + \cdots + 2^{m_r} \ge 2$ and even integers $q$ with $2 \le q < \infty$, where the constant $C_{q,s} > 0$ is independent of $N$. In particular, we have
\begin{equation*}
\mathcal{L}_q(\{\boldsymbol{x}^{(s)}_0, \boldsymbol{x}^{(s)}_1, \ldots, \boldsymbol{x}^{(s)}_{2^m-1}\} ) \le C_{q,s} \frac{m^{(s-1)/2}}{2^m}
\end{equation*}
for all $m, s \ge 1$ and $2 \le q < \infty$. Further we give explicit constructions of finite point sets $\boldsymbol{y}_0, \boldsymbol{y}_1, \ldots, \boldsymbol{y}_{N-1}$ in $[0,1)^\mathbb{N}$ for all $N \ge 2$ such that their projection on the first $s$ coordinates $\boldsymbol{y}_0^{(s)}, \boldsymbol{y}_1^{(s)}, \ldots, \boldsymbol{y}_{N-1}^{(s)}$ in $[0,1)^s$ for all $s \ge 1$ satisfies
\begin{equation*}
\mathcal{L}_q(\{\boldsymbol{y}^{(s)}_0, \boldsymbol{y}^{(s)}_1, \ldots, \boldsymbol{y}^{(s)}_{N-1}\} ) \le C_{q,s} \frac{(\log N)^{(s-1)/2}}{N}
\end{equation*}
for all $2 \le q < \infty$, where $C_{q,s} > 0$ is again independent of $N$. The last two results are best possible by a lower bound of Roth [ K. F. Roth, On irregularities of distribution. Mathematika, {\bf 1} (1954), 73--79.].

The proofs are based on a generalization of the Niederreiter-Rosenbloom-Tsfasman metric, which itself is a generalization of the Hamming metric.
\end{abstract}

{\bf Keywords}: $\mathcal{L}_q$ discrepancy, optimal convergence, explicit constructions, digital sequence, higher order sequence, higher order digital sequence, higher order net, higher order digital net

{\bf AMS Subject Classification}: Primary: 11K38; Secondary: 11K06, 11K45; 

\section{Introduction}

The $\mathcal{L}_q$ discrepancy is a measure of the equidistribution properties of a point set $\widehat{\cP}_{N,s} = \{ \bsx^{(s)}_0,\bsx^{(s)}_1, \ldots, \bsx^{(s)}_{N-1}\}$ in the unit cube $[0,1]^s$, see \cite{BC,kuinie,mat}. It is based on the local discrepancy function
\begin{equation*}
\delta(\widehat{\cP}_{N,s}; \bstheta) = \frac{1}{N} \sum_{n=0}^{N-1} 1_{[\bszero, \bstheta)}(\bsx_n) - \prod_{j=1}^s \theta_j,
\end{equation*}
where $\bstheta = (\theta_1,\ldots, \theta_s)$, $[\bszero, \bstheta) = \prod_{j=1}^s [0, \theta_j)$, and $1_{[\bszero, \bstheta)}$ denotes the characteristic function of the interval $[\bszero, \bstheta)$. For a given interval $[\bszero, \bstheta)$, the local discrepancy function measures the difference between the proportion of points which fall into this interval and the volume of the interval. The $\mathcal{L}_q$ discrepancy is then the $\mathcal{L}_q$ norm of the discrepancy function
\begin{equation*}
\mathcal{L}_q(\widehat{\cP}_{N,s}) = \left(\int_{[0,1]^s} |\delta(\widehat{\cP}_{N,s}, \bstheta)|^q \,\mathrm{d} \bstheta \right)^{1/q},
\end{equation*}
with the obvious modifications for $q = \infty$. One of the questions on irregularities of distribution is concerned with the precise order of convergence of the smallest possible values of $\mathcal{L}_q(\widehat{\cP}_{N,s})$ as $N$ goes to infinity. That is, the aim is to study the convergence of
\begin{equation*}
\mathcal{L}_{q,N,s} = \inf_{\satop{\widehat{\cP}_{N,s} \subset [0,1]^s}{|\widehat{\cP}_{N,s}|=N}} \mathcal{L}_q(\widehat{\cP}_{N,s}),
\end{equation*}
as $N$ tends to infinity (for fixed dimension $s$) and the explicit construction of point sets $\widehat{\cP}_{N,s}$ which achieve the optimal rate of convergence of the $\mathcal{L}_q$ discrepancy \cite{BC}. (Such point sets are of use for instance in quasi-Monte Carlo integration \cite{DP10,DT97,niesiam}.)

In the next subsection we describe the results of this paper.

\subsection{The results}

Let $\mathbb{N}$ denote the set of natural numbers and $\mathbb{N}_0$ the set of nonnegative integers.

In the following we write $A(N,m,q,s) \ll_{q,s} B(N,m,q,s)$ if there is a constant $c_{q,s} > 0$ which depends only on $s$ and $q$ (but not on $N$ or $m$) such that $A(N,m,q, s) \le c_{q,s} B(N,m,q, s)$ for all $m$ and $N$, with analogous meanings for $\ll_s, \gg_{q,s}, \gg_s$. We write $\cS = (\bsx_0, \bsx_1,\ldots) \subset [0,1)^{\mathbb{N}}$ for an infinite dimensional sequence and $\cS_s = (\bsx_0^{(s)}, \bsx_1^{(s)}, \ldots) \subset [0,1)^s$ for the projection of $\cS$ onto the first $s$ coordinates. Further let $\cP_N = \{\bsx_0, \bsx_1, \ldots, \bsx_{N-1} \} \subset [0,1)^{\mathbb{N}}$ denote the first $N$ points of $\cS$ and let $\cP_{N,s} = \{\bsx_0^{(s)}, \bsx_1^{(s)}, \ldots, \bsx_{N-1}^{(s)}\} \subset [0,1)^s$ denote the first $N$ points of $\cS_s$. For point sets consisting of $N$ elements which are not obtained as the first $N$ points of a sequence for all $N \in \mathbb{N}$, we write $\widehat{\cP}_N$ if the point set is in $[0,1)^{\mathbb{N}}$ and $\widehat{\cP}_{N,s}$ for point sets in $[0,1)
^s$, where $\widehat{P}_{N,s}$ is the projection of $\widehat{P}_N$ onto the first $s$ coordinates.

We show the following theorem.
\begin{theorem}\label{thm1}
One can explicitly construct an infinite sequence $\cS$ of points in $[0,1)^{\mathbb{N}}$ such that for all $s \ge 1$, the projection of the first $N$ points of $\cS$ onto the first $s$ coordinates $\cP_{N,s}$ satisfies
\begin{equation*}
\mathcal{L}_{q}(\cP_{N,s}) \ll_{q,s}  \frac{r^{3/2-1/q} }{N} \sqrt{\sum_{v=1}^r m_v^{s-1}}
\end{equation*}
for all $N \in \mathbb{N}$, with $N \ge 2$, and with dyadic expansion $N = 2^{m_1} + 2^{m_2} + \cdots + 2^{m_r}$, where $m_1 > m_2 > \cdots > m_r \ge 0$, and all even integers $q$ with $2 \le q < \infty$. In particular, we have
\begin{equation*}
\mathcal{L}_{q}(\cP_{2^m,s}) \ll_{q,s} \frac{m^{(s-1)/2}}{2^m} \quad \mbox{for all } m \ge 1 \mbox{ and } 2 \le q < \infty.
\end{equation*}
\end{theorem}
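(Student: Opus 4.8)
The plan is to take $\mathcal{S}$ to be an explicitly constructed \emph{order two digital sequence over $\mathbb{F}_2$}. Concretely, one starts from an explicit digital sequence over $\mathbb{F}_2$ whose generating matrices $C_1,C_2,\dots$ are chosen so that all finite-dimensional truncations have a $t$-value bounded independently of the dimension (for instance the matrices of a Sobol' or Niederreiter--Xing sequence), and one forms $\mathcal{S}$ by digit interlacing the coordinates in pairs, so that the $j$-th coordinate of $\boldsymbol{x}_n$ is the digit interlacing of the $(2j-1)$-st and $2j$-th coordinates of the underlying sequence. By the theory of higher order digital nets, for every $m$ the first $2^m$ points of $\mathcal{S}_s$ then form a digital order two net for which the dual net $\mathcal{D}_{m,s}$ --- the set of $\boldsymbol{k}\in\mathbb{N}_0^s$ for which the first $m$ binary digits of $\boldsymbol{c}(\boldsymbol{k}):=C_1^{\top}\vec{k}_1\oplus\cdots\oplus C_s^{\top}\vec{k}_s$ vanish --- contains no nonzero element of order two NRT weight $\mu_2(\boldsymbol{k})$ below $2m-t_s$, with $t_s$ depending only on $s$. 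The generalisation of the Niederreiter--Rosenbloom--Tsfasman metric announced in the abstract is precisely the book-keeping device that allows one to control all of these dual nets, and hence the partial point sets arising for every $N$ rather than only for dyadic $N$, at once.

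For the analytic part I would use the Walsh expansion of the local discrepancy function. Writing $w_k(\theta):=\int_0^{\theta}\wal_k(y)\rd y$ (so $w_0(\theta)=\theta$) and expanding each factor $1_{[0,\theta_j)}$ in Walsh functions of the point variable gives
\[
\delta(\mathcal{P}_{N,s};\boldsymbol{\theta})=\sum_{\boldsymbol{k}\ne\boldsymbol{0}}\Big(\prod_{j=1}^s w_{k_j}(\theta_j)\Big)\,\frac{1}{N}\sum_{n=0}^{N-1}\wal_{\boldsymbol{k}}(\boldsymbol{x}_n).
\]
Since $q$ is even we may drop the absolute value, $|\delta|^q=\delta^q$, and integrating term by term over $\boldsymbol{\theta}\in[0,1]^s$ yields
\begin{equation*}
\mathcal{L}_q(\mathcal{P}_{N,s})^q=\sum_{\boldsymbol{k}_1,\dots,\boldsymbol{k}_q\ne\boldsymbol{0}}\;\prod_{i=1}^q\Big(\frac{1}{N}\sum_{n=0}^{N-1}\wal_{\boldsymbol{k}_i}(\boldsymbol{x}_n)\Big)\;\prod_{j=1}^s\int_0^1\prod_{i=1}^q w_{k_{i,j}}(\theta)\rd\theta,
\end{equation*}
the outer sum running over $\boldsymbol{k}_1,\dots,\boldsymbol{k}_q\in\mathbb{N}_0^s\setminus\{\boldsymbol{0}\}$. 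This identity requires two further inputs. First, a technical bound on the one-dimensional integrals $\int_0^1\prod_{i=1}^q w_{k_i}(\theta)\rd\theta$: they are $\ll_q 2^{-(a_1+\cdots+a_q)}$ in terms of the sorted resolutions $a_1\ge\cdots\ge a_q$ of the $k_i$ (the resolution of $k$ being $\lfloor\log_2 k\rfloor+1$ for $k\ge1$ and $0$ for $k=0$), with an additional gain of a factor $2^{-(a_1-a_2)}$ whenever the largest resolution is strictly dominant; this uses the finer structure of the Walsh expansions of the $w_k$ (in particular that each $w_k$ is Lipschitz with unit slope and has mean zero over the dyadic intervals on which $\wal_k$ is constant). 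Second, the character sum of a digital sequence factors exactly: with $N=2^{m_1}+\cdots+2^{m_r}$,
\[
\frac{1}{N}\sum_{n=0}^{N-1}\wal_{\boldsymbol{k}}(\boldsymbol{x}_n)=\frac{1}{N}\sum_{\substack{1\le v\le r\\ \boldsymbol{c}(\boldsymbol{k})|_{<m_v}=\boldsymbol{0}}}(-1)^{\langle\boldsymbol{c}(\boldsymbol{k}),\,\boldsymbol{a}_{v-1}\rangle}\,2^{m_v},
\]
where $\boldsymbol{a}_{v-1}$ is the binary digit vector of $2^{m_1}+\cdots+2^{m_{v-1}}$ and $\boldsymbol{c}(\boldsymbol{k})|_{<m_v}=\boldsymbol{0}$ means $\boldsymbol{k}\in\mathcal{D}_{m_v,s}$; by the order two property, every level $v$ occurring in this sum satisfies $2^{m_v}\ll_s 2^{\mu_2(\boldsymbol{k})/2}$, so that in particular $\big|\tfrac{1}{N}\sum_{n}\wal_{\boldsymbol{k}}(\boldsymbol{x}_n)\big|\ll_s\min\big(1,\,2^{\mu_2(\boldsymbol{k})/2}/N\big)$.

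Substituting these inputs reduces the theorem to a counting estimate. Expanding the product over $i$ produces a sum over the at most $r^q$ tuples $(v_1,\dots,v_q)$ of dyadic levels; for each tuple one sums over those $\boldsymbol{k}_i\in\mathcal{D}_{m_{v_i},s}\setminus\{\boldsymbol{0}\}$ not lying in the next larger dual net, weighting by $\prod_i 2^{m_{v_i}}/N$ and by the integral bound. The remaining work uses the key counting lemma --- the number of $\boldsymbol{k}\in\mathcal{D}_{m,s}$ with $\mu_2(\boldsymbol{k})=\mu$ is $\ll_s 2^{\mu/2-m}\mu^{s-1}$ for $\mu\ge2m-t_s$ and is $0$ otherwise --- together with the dominant-resolution gain in the integral bound: the gain makes the $q$-fold sum behave like a near-diagonal sum, so that the geometric series in the resolutions converge and the a priori divergent factor $\sum_{\mu}\mu^{s-1}$ is replaced by a power $m_v^{(s-1)/2}$ rather than $m_v^{s-1}$. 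One is then left with a bound of the shape $N^{-q}\sum_{v_1,\dots,v_q}\prod_{i=1}^q m_{v_i}^{(s-1)/2}$ up to factors depending only on $q$ and $s$; estimating the last sum by H\"older's inequality against $\big(\sum_{v=1}^r m_v^{s-1}\big)^{q/2}$ and collecting the residual powers of $r$ (from the $r^q$ choices of levels and from the H\"older step) gives a bound of the form $N^{-q}\,r^{\,3q/2-1}\big(\sum_{v=1}^r m_v^{s-1}\big)^{q/2}$, which is exactly the claimed estimate after taking $q$-th roots; the case $N=2^m$ is $r=1$.

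The step I expect to be the main obstacle is the counting in the previous paragraph, together with the precise form of the one-dimensional integral lemma it rests on. The term-by-term estimates diverge, and the proof closes only because the dominant-resolution gain forces the $q$-fold sum to be essentially diagonal; arranging this so that the polynomial factors in the $m_v$ come out with the exponent $(s-1)/2$ (which matches Roth's lower bound and hence cannot be improved) and the powers of $r$ come out precisely as $3/2-1/q$ is the delicate part, and is exactly what the generalised Niederreiter--Rosenbloom--Tsfasman metric is designed to organise.
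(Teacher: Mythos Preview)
Your construction and the arithmetic inputs are right: the sequence is the digit-interlaced Niederreiter/Sobol' sequence, each dyadic block $Q_h$ of size $2^{m_h}$ is a shifted order~2 digital net, and the dual-net inequality $\mu_2(\bsk)>2m_h-t$ for $\bsk\in\mathcal D_{m_h,s}^\ast$ is exactly what the paper uses. Where your plan diverges from the paper is in the analytic machinery, and that is also where the gap is.

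The paper does \emph{not} expand $\int\delta^q$ directly as a $q$-fold sum over $\bsk_1,\dots,\bsk_q$. Instead it first applies Skriganov's Littlewood--Paley inequality (Proposition~\ref{prop_skr}):
\[
\|\delta\|_{\mathcal L_q}^{2}\ \ll_{q,s}\ \sum_{\bsb\in\mathbb N_0^s}\|\sigma_{\bsb}\delta\|_{\mathcal L_q}^{2},
\]
and only \emph{then} expands each $\int|\sigma_{\bsb}|^q$ as a $(q-1)$-fold sum over $\bsell_1,\dots,\bsell_{q-1}\in B(\bsb)$, all lying in the same dyadic box. The square on the outside of the block decomposition is precisely what produces the exponent $(s-1)/2$: there are $\asymp m^{s-1}$ relevant boxes $\bsb$ with $|\bsb|_1\approx m$, each contributing $\|\sigma_{\bsb}\|_q^2\ll N^{-2}$, and summing gives $m^{s-1}N^{-2}$, hence $m^{(s-1)/2}/N$ after the square root. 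Your sketch tries to manufacture this square-root saving from a ``dominant-resolution gain'' $2^{-(a_1-a_2)}$ in the one-dimensional integrals, but that gain only couples pairs of indices and does not by itself diagonalise a $q$-fold sum to the extent needed; the passage ``the geometric series converge and the a~priori divergent factor $\sum_\mu\mu^{s-1}$ is replaced by $m_v^{(s-1)/2}$'' is the unsubstantiated step. Without the Littlewood--Paley square function you would naturally land at $m^{s-1}/N$ after taking $q$th roots, not $m^{(s-1)/2}/N$.

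A second, smaller issue: your final bookkeeping is inconsistent. You write the bound as $N^{-q}\sum_{v_1,\dots,v_q}\prod_i m_{v_i}^{(s-1)/2}$ and then invoke ``$r^q$ choices of levels'' again; but that sum already equals $\big(\sum_v m_v^{(s-1)/2}\big)^q\le r^{q/2}\big(\sum_v m_v^{s-1}\big)^{q/2}$, which would give $r^{1/2}$ after $q$th roots, not $r^{3/2-1/q}$. In the paper the extra powers of $r$ arise differently: one factor $r^{1-1/q}$ comes from bounding $\|\sigma_{\bsb}\|_q^q$ by $r^{q-1}N^{-q}\sum_h(\cdots)$ (Lemma~\ref{lem_bound_int_sigma}), and a further factor $r^{1/2}$ from summing $\|\sigma_{\bsb}\|_q^2$ over $\bsb$ (Lemma~\ref{lem_Lq_bound_weak}). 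So even if your integral/diagonalisation argument could be made to work, the $r$-accounting would have to be redone.
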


As a corollary to Theorem~\ref{thm1}, using an idea from \cite{CS02}, we can also obtain explicit constructions of finite point sets in the infinite dimensional unit cube $[0,1]^{\mathbb{N}}$ whose projection onto the first $s$ coordinates achieve the optimal rate of convergence of the $\mathcal{L}_q$ discrepancy.
\begin{corollary}\label{cor1}
For every $N \ge 2$ one can explicitly construct a point set  $\widehat{\cP}_{N}$ of $N$ points in $[0,1)^{\mathbb{N}}$ such that for all $s \ge 1$, the projection of $\widehat{\cP}_N$ onto the first $s$ coordinates $\widehat{\cP}_{N,s}$ satisfies
\begin{equation*}
\mathcal{L}_{q}(\widehat{\cP}_{N,s}) \ll_{q,s}  \frac{(\log N)^{(s-1)/2}}{N} \quad \mbox{for all } 2 \le q < \infty.
\end{equation*}
\end{corollary}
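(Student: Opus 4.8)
Following the idea of \cite{CS02}, I would build a point set of $N$ points by placing scaled copies of the digital nets of Theorem~\ref{thm1} into disjoint slabs of one extra coordinate, and then analyse its $\LL_q$ discrepancy by a dyadic decomposition in which copies of different sizes combine orthogonally. \emph{Construction.} Let $\cS=(\bsx_0,\bsx_1,\dots)$ be the sequence of Theorem~\ref{thm1}, write $\bsx_i=(x_{i,1},x_{i,2},\dots)$, fix $N\ge2$ with dyadic expansion $N=2^{m_1}+\dots+2^{m_r}$, $m_1>\dots>m_r\ge0$, and set $\sigma_1=0$, $\sigma_{v+1}=\sigma_v+2^{m_v}$ (so $\sigma_{r+1}=N$). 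Prepending a new leading coordinate, put
\[
\bsy_{\sigma_v+i}=\Big(\tfrac{\sigma_v+i}{N},\,x_{i,1},x_{i,2},\dots\Big)\in[0,1)^{\NN}\qquad(1\le v\le r,\ 0\le i<2^{m_v}),
\]
and $\widehat{\cP}_N=\{\bsy_0,\dots,\bsy_{N-1}\}$: block $v$ carries the first $2^{m_v}$ points of $\cS$ in the trailing coordinates and an equispaced grid of mesh $1/N$ inside the slab $[\sigma_v/N,\sigma_{v+1}/N)$ in the leading one. For $s=1$ the projection $\widehat{\cP}_{N,1}=\{0,1/N,\dots,(N-1)/N\}$ has $\LL_q$ discrepancy $\ll1/N$, so assume $s\ge2$; then $\widehat{\cP}_{N,s}$ consists of the points $\big((\sigma_v+i)/N,\bsx^{(s-1)}_i\big)$.

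\emph{Reduction.} Fix $(\alpha,\bstheta)\in(0,1]\times[0,1]^{s-1}$ and let $w$ be determined by $\sigma_w<\alpha N\le\sigma_{w+1}$. The points of $\widehat{\cP}_{N,s}$ whose leading coordinate is $<\alpha$ are exactly blocks $1,\dots,w-1$ in full together with the first $k:=\lceil\alpha N-\sigma_w\rceil\in\{1,\dots,2^{m_w}\}$ points of block $w$, i.e.\ $\cP_{k,s-1}$. Counting the points in $[\bszero,\bstheta)$ block by block and subtracting the volume yields $N\,\delta(\widehat{\cP}_{N,s};(\alpha,\bstheta))=G(\alpha,\bstheta)+\vartheta$, where
\[
G(\alpha,\bstheta):=\sum_{v=1}^{w-1}2^{m_v}\,\delta\big(\cP_{2^{m_v},s-1};\bstheta\big)+k\,\delta\big(\cP_{k,s-1};\bstheta\big),
\]
and $|\vartheta|\le\prod_{j=1}^{s-1}\theta_j\le1$ because $|\sigma_w+k-\alpha N|<1$. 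Hence, by Minkowski's inequality in $\bstheta$,
\[
\LL_q(\widehat{\cP}_{N,s})\le\frac1N\Big(1+\sup_{\alpha\in(0,1]}\big\|G(\alpha,\cdot)\big\|_{L^q([0,1]^{s-1})}\Big),
\]
so everything reduces to bounding $\|G(\alpha,\cdot)\|_q$ uniformly in $\alpha$.

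\emph{The key estimate.} Write $k=2^{m'_1}+2^{m'_2}+\dots$ in binary. The first $k$ points of the digital $\FF_2$-sequence $\cS$ split into consecutive runs of lengths $2^{m'_1},2^{m'_2},\dots$, and the $j$-th run is an $\FF_2$-digital shift of the digital net $\cP_{2^{m'_j},s-1}$, so $k\,\delta(\cP_{k,s-1};\bstheta)=\sum_j2^{m'_j}\delta(\mathcal{Q}_j;\bstheta)$ with $\mathcal{Q}_j$ a digital $\FF_2$-net of $2^{m'_j}$ points. Thus $G(\alpha,\cdot)$ is a sum of terms $2^\mu\delta(\mathcal{R}_\mu;\cdot)$ over the \emph{pairwise distinct} dyadic scales $\mu\in\{m_1,\dots,m_{w-1}\}\cup\{m'_1,m'_2,\dots\}$ — distinct because $m_v>m_w\ge m'_j$ for all $v<w$ and all $j$ — each $\mathcal{R}_\mu$ a digital $\FF_2$-net with $2^\mu$ points, satisfying $2^\mu\LL_q(\mathcal{R}_\mu)\ll_{q,s}(\mu+1)^{(s-2)/2}$ (the $N=2^\mu$ case of Theorem~\ref{thm1} in dimension $s-1$ for $\mu\ge1$, trivially for $\mu=0$; for the shifted nets $\mathcal{Q}_j$ one uses in addition that this bound is invariant under $\FF_2$-digital shifts). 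Since the discrepancy function of a digital $\FF_2$-net with $2^\mu$ points is concentrated at Walsh scale $\mu$, the Littlewood--Paley/square-function mechanism driving the proof of Theorem~\ref{thm1} turns the sum over distinct scales into an $\ell^2$ combination (this is where the even exponent $q$ enters), giving
\[
\big\|G(\alpha,\cdot)\big\|_{L^q([0,1]^{s-1})}\ll_{q,s}\Big(\sum_{v=1}^{w-1}\big(2^{m_v}\LL_q(\cP_{2^{m_v},s-1})\big)^2+\sum_j\big(2^{m'_j}\LL_q(\mathcal{Q}_j)\big)^2\Big)^{1/2}\ll_{q,s}\Big(\sum_{v=1}^{w-1}(m_v+1)^{s-2}+\sum_j(m'_j+1)^{s-2}\Big)^{1/2}.
\]
All scales occurring are $\le\log_2N$ and there are $\ll\log N$ of them, so the right-hand side is $\ll_{q,s}(\log N)^{(s-1)/2}$, uniformly in $\alpha$. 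Combined with the previous display this gives $\LL_q(\widehat{\cP}_{N,s})\ll_{q,s}(\log N)^{(s-1)/2}/N$.

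\emph{Where the difficulty lies.} The whole argument turns on the scale-orthogonality step. Estimating $G$ by Minkowski's inequality instead would produce $\sum_v(m_v+1)^{(s-2)/2}\asymp(\log N)^{s/2}$ in place of $(\sum_v(m_v+1)^{s-2})^{1/2}\asymp(\log N)^{(s-1)/2}$, i.e.\ it would lose precisely the $\sqrt{\log N}$ that separates the optimal rate from a sub-optimal one, and it would also force one to control the partial block $\cP_{k,s-1}$ only through the general-$N$ bound of Theorem~\ref{thm1}, which is itself too lossy. One therefore has to exploit genuinely that the discrepancy functions of digital $\FF_2$-nets at distinct dyadic scales occupy essentially disjoint portions of the Walsh spectrum, so that the $q$-th moment of $G$ is dominated by the $\ell^2$-combination of the individual $\LL_q$ norms; this is exactly the mechanism responsible for the factor $\sqrt{\sum_v m_v^{s-1}}$ in Theorem~\ref{thm1}, and the cleanest route is to isolate it there as a lemma and reuse it here. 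Everything else — the counting identity for $\delta$, Minkowski over the summands, the binary decomposition of the partial block, and the $\FF_2$-digital-shift invariance of the net bound — is routine.
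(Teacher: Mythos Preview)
Your construction and approach differ substantially from the paper's, and the central step of your argument is not justified.

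\textbf{The paper's route is far simpler.} Given $N$, the paper picks $m$ with $2^{m-1}\le N<2^m$, takes the $2^m$-point order-$2$ digital net $\mathscr{D}(\cP_{2^m})$ (already in $[0,1)^{\mathbb N}$), selects the $N$ points whose first coordinate lies in $[0,N/2^m)$, and rescales that coordinate by $2^m/N$. A change of variables in the first coordinate shows
\[
\mathcal L_q(\widehat{\cP}_{N,s})\le\Big(\tfrac{2^m}{N}\Big)^{1+1/q}\mathcal L_q\big(\mathscr{D}(\cP_{2^m,2s})\big)\le 3\,\mathcal L_q\big(\mathscr{D}(\cP_{2^m,2s})\big)\ll_{q,s}\frac{m^{(s-1)/2}}{N},
\]
using only the $N=2^m$ case of Theorem~\ref{thm1}. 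No multi-scale analysis is needed at all; the whole corollary is a five-line consequence of the net bound.

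\textbf{Where your argument breaks.} Everything in your sketch is routine except the displayed ``key estimate''
\[
\|G(\alpha,\cdot)\|_q\ll_{q,s}\Big(\sum_\mu\big(2^\mu\mathcal L_q(\mathcal R_\mu)\big)^2\Big)^{1/2},
\]
which you assert follows from ``the Littlewood--Paley/square-function mechanism driving the proof of Theorem~\ref{thm1}''. But the discrepancy functions $g_\mu=2^\mu\delta(\mathcal R_\mu;\cdot)$ do \emph{not} have disjoint Walsh supports: for every $\bsb$ the projection $\sigma_{\bsb}g_\mu$ is nonzero, with only exponential decay in $|\mu-|\bsb|_1|$. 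Proposition~\ref{prop_skr} alone therefore does not give an $\ell^2$ combination of the $\|g_\mu\|_q$. To push it through one would need quantitative control of $\|\sigma_{\bsb}g_\mu\|_q$ for each $(\bsb,\mu)$ and then resum over $\bsb$---precisely the content of Lemmas~\ref{lem_c_bound}--\ref{lem_Lq_bound_weak}, redone for your particular sum $G$. That is not ``isolating a lemma and reusing it''; it is reproving Theorem~\ref{thm2} in a variant setting. Note also that the paper's own bound in Theorem~\ref{thm2} carries an extra factor $r^{3/2-1/q}$ beyond $\sqrt{\sum_v m_v^{s-1}}$, so even the paper does not establish the clean $\ell^2$ combination you claim is ``exactly'' its mechanism.

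In short: the paper sidesteps the entire difficulty you identify by taking a single net and truncating, so your hard step never arises. Your route may be salvageable, but as written the key inequality is an assertion, not a proof.
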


In the next subsection we provide a review of the literature and explain how the above results relate to what is known.

\subsection{Literature review}

The classic lower bound on the $\mathcal{L}_q$ discrepancy is by Roth~\cite{Roth} and ascertains that
\begin{equation*}
\mathcal{L}_{q,N,s} \gg_{s} \frac{(\log N)^{(s-1)/2}}{N} \quad \mbox{for all } N, q, s \ge 2.
\end{equation*}
This result is known to be best possible for $q = 2$ as shown first by Davenport~\cite{dav} for $s=2$ and then by Roth~\cite{roth2,Roth4}. Other constructions of point sets with optimal $\mathcal{L}_2$ discrepancy were found by Chen~\cite{C80, C83}, Frolov~\cite{Frolov}, Dobrovol'ski\v{i}~\cite{Do84}, Skriganov~\cite{Skr89, Skr94}, Hickernell and Yue~\cite{HY00}, and Dick and Pillichshammer~\cite{DP05b}. For more details on the history of the subject see the monograph \cite{BC}. All the constructions mentioned so far involve some random elements, except for the special case of $s=2$ studied by Davenport. Further examples of two-dimensional point sets with best possible order of $\LL_2$ discrepancy can be found in
\cite{FauPi09a,FauPi09,FauPiPriSch09,KriPi2006,lp,pro1988a}. Thus the constructions for $s \ge 3$ are not explicit. First explicit constructions of finite point sets in fixed dimension matching the lower bound were provided by the works of Chen and Skriganov \cite{CS02} for $q=2$ and Skriganov~\cite{Skr} for $2 \le q < \infty$. See also Chen and Skriganov~\cite{CS08} where the arguments of \cite{CS02} were simplified and the constant was improved. The papers \cite{CS02} and \cite{Skr} completely solved the open problem of finding explicit constructions of finite point sets of fixed dimension with optimal $\mathcal{L}_2$ and optimal $\mathcal{L}_q$ discrepancy. On the other hand, the $\mathcal{L}_\infty$ discrepancy, called star discrepancy, is much harder to analyze, the exact order of convergence is not known \cite{BL,BLV}.

We briefly describe what is known about the discrepancy of sequences. A lower bound for infinite sequences of points was shown by Pro{\u\i}nov~\cite{pro85}, which states that for all infinite sequences $\cS_s$ in the unit cube $[0,1)^s$ one has
\begin{equation}\label{ineq_pro}
\mathcal{L}_2(\cP_{N,s}) \gg_s \frac{(\log N)^{s/2}}{N},
\end{equation}
for infinitely many values of $N$. This implies that one cannot construct an infinite sequence of points such that its first $N$ points match Roth's lower bound for all values of $N$. An explicit construction of an infinite sequence of points $\cS_s$ in $[0,1]^s$ which satisfies
\begin{equation*}
\mathcal{L}_2(\cP_{N,s}) \ll_s \frac{(\log N)^{s/2}}{N} \quad \mbox{for all } N \ge 2,
\end{equation*}
was provided in \cite{DP12}. Note that those results only apply to the $\mathcal{L}_2$ discrepancy. Further, the sequences from \cite{DP12} match Roth's lower bound for infinitely many values of $N$, more precisely, for $N = 2^m$ one obtains
\begin{equation*}
\mathcal{L}_2(\cP_{2^m,s}) \ll_s \frac{m^{(s-1)/2}}{2^m} \quad \mbox{for all } m \ge 1.
\end{equation*}
One-dimensional infinite sequences whose $\LL_2$ discrepancy satisfies a bound of order $\sqrt{\log N}/N$ for every $N \ge 2$ were given in, e.g. \cite{chafa,g96,lp,pro85,pg}. These constructions are mainly based on the symmetrization of sequences (also called reflection principle).

The explicit construction of sequences studied in \cite{DP12} are the same as in this paper. In \cite{DP12} the authors studied the $\LL_2$ discrepancy, whereas here we consider the $\mathcal{L}_q$ discrepancy for $2 \le q < \infty$. 

Using the estimations $r \le \log N$ and $m_h \le \log N$, the first result of Theorem~\ref{thm1} implies that
\begin{equation}\label{Lq_seq}
\mathcal{L}_q(\cP_{N,s}) \ll_{q,s} \frac{(\log N)^{s/2 + 3/2-1/q}}{N},
\end{equation}
for all $N \ge 2$ and all even integers $q$ with $2 \le q < \infty$. Thus, at least for $q=2$, this result is not best possible. It seems reasonable to suggest that the exponent of the $\log N$ factor above can be replaced by $s/2$, which would be best possible by the lower bound of Pro{\u\i}nov~\cite{pro85}.

On the other hand, for finite point sets, the second part of Theorem~\ref{thm1} and Corollary~\ref{cor1} match the lower bound by Roth~\cite{Roth} and are therefore optimal. The construction of the sequences and point sets presented in this paper uses the finite field $\mathbb{F}_2$, which is different from the construction in \cite{CS02,Skr}, where the points were constructed using the finite field $\mathbb{F}_p$ of prime order $p$ with $p \ge q s^2$. By removing the restriction $p \ge q s^2$ we can now use the projection of infinite dimensional point sets to obtain point sets with optimal $\mathcal{L}_q$ discrepancy, which is not possible using the construction from \cite{CS02, Skr}. Further, the bound on $\mathcal{L}_q(\cP_{N,s})$ holds for all $2 \le q < \infty$, i.e., as opposed to \cite{Skr}, one does not have to change the point set as $q$ increases. Hence the explicit construction in this paper is the first construction which achieves the optimal rate of convergence of the $\mathcal{L}_q$ discrepancy for all $2 \le q < 
\infty$. In fact, we conjecture that the explicit constructions of the point sets and sequences in this paper also achieve the optimal rate of convergence of the $\LL_\infty$ discrepancy.


In the next subsection we describe the explicit construction of sequences and point sets which satisfy Theorem~\ref{thm1} and Corollary~\ref{cor1}.

\subsection{Explicit construction of sequences}\label{sec_const}

The construction is done in two steps. In the first step, we use explicit constructions of so-called digital $(t,m,s)$-nets and digital $(t,s)$-sequences \cite{DP10,nie87,nie88,niesiam,NX96,sob67} over the finite field $\mathbb{F}_2$. We introduce the relevant background as well as a special case of a suitable explicit construction in the following.

We first introduce some notation. We call $x\in [0,1)$ a dyadic rational if it can be written in a finite dyadic expansion. By $\oplus$ we denote the digit-wise addition modulo $2$, i.e., for $x, y \in \mathbb{R}_+ = \{z \in \mathbb{R}: z \ge 0\}$  and dyadic expansions $x = \sum_{i=w}^{\infty} \frac{x_i}{2^i}$ and $y = \sum_{i=w}^{\infty} \frac{y_i}{2^i}$ for some $w \in \mathbb{Z}$, we have $$ x \oplus y := \sum_{i=w}^{\infty } \frac{z_i}{2^i}, \; \; \;{\rm where} \; \; \; z_i := x_i + y_i \mypmod{2},$$ where for dyadic rationals we always use the finite expansion. For vectors $\boldsymbol{x}, \boldsymbol{y} \in \mathbb{R}_+^s$ we use the notation $\bsx \oplus \bsy$ to denote the component-wise addition $\oplus$. (Since we consider addition modulo $2$, the dyadic subtraction $\ominus$ is the same as the dyadic addition $\oplus$.) Note that, for instance, for $x = 2^{-1} + 2^{-3} + 2^{-5} + \cdots$ and $y = 2^{-2} + 2^{-4} + 2^{-6} + \cdots$ we obtain $x \oplus y = 2^{-1} + 2^{-2} + 2^{-3} + \cdots$, which 
is given by its infinite expansion although it is a dyadic rational. Hence $x \oplus y$ is not always defined via its finite expansion, even if we always use the finite expansion of $x$ and $y$. This problem could be avoided by using the dyadic group $(\mathbb{F}_2)^{\mathbb{N}}$ as in \cite[Section~2]{Fine} instead of $\mathbb{R}_+$. However, this situation does not occur in this paper since we only use $\oplus$ for (vectors of) dyadic rationals (in fact, usually nonnegative integers) for which we always use the finite expansion in our proofs, so it is sufficient to use $\mathbb{R}_+$ instead of $(\mathbb{F}_2)^{\mathbb{N}}$.

\subsubsection*{The digital construction scheme}
We now describe the digital construction scheme for point sets in the unit cube. In the following we identify $0, 1 \in \FF_2$ with the integers $0, 1$. Let $C_j = (c_{j,k,\ell})_{\satop{1 \le k \le 2m}{1 \le \ell \le m}} \in \mathbb{F}_2^{2m \times m}$ for $j \in \mathbb{N}$ be $2m \times m$ matrices over $\mathbb{F}_2$.  Let $n = n_0 + n_1 2 + \cdots + n_{m-1} 2^{m-1} \in \{0, 1, \ldots, 2^m-1\}$ be the dyadic expansion of $n$. Set $\vec{n} = (n_0, n_1, \ldots, n_{m-1})^\top \in \mathbb{F}_2^{m}$. Then define
\begin{equation*}
\vec{x}_{j,n} = C_j \vec{n},
\end{equation*}
that is, $\vec{x}_{j,n} = (x_{j,n,1}, x_{j,n,2},\ldots, x_{j,n,2m} )^\top$ with $x_{j,n,k} = \sum_{\ell=1}^m n_{\ell-1} c_{j,k,\ell} \in \mathbb{F}_2$ and define
\begin{equation*}
x_{j,n} = x_{j,n,1} 2^{-1} + x_{j,n,2} 2^{-2} + \cdots + x_{j,n,2m} 2^{-2m}.
\end{equation*}
Then the $n$th point $\boldsymbol{x}_n$ of the point set is given by $\boldsymbol{x}_n = (x_{1,n}, x_{2,n}, \ldots ) \in [0,1)^{\mathbb{N}}$. The point set $\widehat{\cP}_{2^m} = \{\bsx_0, \bsx_1,\ldots, \bsx_{2^m-1}\}$ is a digital net.

With some minor modifications we can also set $m=\infty$. In this case the generating matrices are of the form $C_j = (c_{j,k,\ell})_{k, \ell \in \mathbb{N}}$ and we obtain an infinite sequence $\cS$, which we call a digital sequence (with generating matrices $(C_j)_{j \in \mathbb{N}}$). In this case we have $x_{j,n,k} = \sum_{\ell=1}^\infty n_{\ell-1} c_{j,k,\ell} \in \mathbb{F}_2$, which is actually a finite sum since for any $n \in \mathbb{N}_0$ only finitely many digits are nonzero. Further, we consider only matrices $C_j = (c_{j,k, \ell})_{k, \ell \in \mathbb{N}}$ for which $c_{j,k,\ell} = 0$ for all $k > 2 \ell$.  (We point out that we actually only need $c_{j,k,\ell} = 0$ for all $k$ large enough for our purposes here, but to simplify the notation we use only constructions for which $c_{j,k,\ell}=0$ for $k > 2 \ell$.)

For a matrix $C_j = (c_{j,k,\ell}) \in \mathbb{F}_2^{\mathbb{N} \times \mathbb{N}}$ we denote by $C_j^{u \times v} = (c_{j,k,\ell})_{1 \le k \le u, 1 \le \ell \le v}$ the left-upper $u \times v$ submatrix of $C_j$.

For the proof we also use the concept of a digitally shifted digital net. Let $\bssigma = (\sigma_1, \sigma_2, \ldots) \in [0,1]^{\mathbb{N}}$ with dyadic expansion $\sigma_j = \sigma_{j,1} 2^{-1} + \sigma_{j,2} 2^{-2} + \cdots$. Then the digitally shifted digital net $\widehat{\cP}_{2^m}(\bssigma)$ consists of the points $\bsx_{n} \oplus \bssigma$ for $0 \le n < 2^m$. (Below we only use shift vectors whose components are dyadic rationals.)  We now show that certain subsets of a digital sequence are digitally shifted digital nets.

\begin{lemma}\label{lem_net_seq}
Let $\bsx_0, \bsx_1, \ldots$ be the points of a digital sequence with generating matrices $C_j = (c_{j,k,\ell})_{k, \ell \in \mathbb{N}}$ for which $c_{j,k,\ell} = 0$ for all $k > 2 \ell$. Let $m \ge 0$. Then for any $\beta \ge 0$ the point set
\begin{equation*}
\bsx_{\beta 2^m}, \bsx_{\beta 2^m+1}, \ldots, \bsx_{\beta 2^m + 2^m-1}
\end{equation*}
is a digitally shifted digital net with generating matrices $C_{j}^{2m \times m}$, $j \in \mathbb{N}$, and which is shifted by a digital shift vector whose coordinates are dyadic rationals.
\end{lemma}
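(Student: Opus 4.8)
The goal is to show that the block $\bsx_{\beta 2^m}, \ldots, \bsx_{\beta 2^m + 2^m - 1}$ of a digital sequence agrees, coordinate by coordinate, with a digitally shifted digital net built from the truncated matrices $C_j^{2m\times m}$. The plan is to write out the digital formula for the $n$th point with $n = \beta 2^m + t$, $0 \le t < 2^m$, and separate the contribution of the low-order digits of $n$ (which encode $t$) from the high-order digits (which encode $\beta$ and are fixed across the block).

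First I would fix $j$ and a coordinate and expand. Write $t = t_0 + t_1 2 + \cdots + t_{m-1} 2^{m-1}$ and $\beta = \beta_0 + \beta_1 2 + \cdots$, so that the nonzero digits of $n = \beta 2^m + t$ are $n_{\ell - 1} = t_{\ell-1}$ for $1 \le \ell \le m$ and $n_{m + \ell - 1} = \beta_{\ell - 1}$ for $\ell \ge 1$. Then for each row index $k$,
\begin{equation*}
x_{j,n,k} = \sum_{\ell=1}^{\infty} n_{\ell-1} c_{j,k,\ell} = \sum_{\ell=1}^{m} t_{\ell-1} c_{j,k,\ell} \;+\; \sum_{\ell=1}^{\infty} \beta_{\ell-1} c_{j,k,m+\ell} \quad \text{in } \FF_2.
\end{equation*}
The first sum is exactly the $k$th digit produced by the matrix $C_j$ acting on $\vec{t} = (t_0,\ldots,t_{m-1})^\top$; the second sum does not depend on $t$, so call it $\sigma_{j,k}(\beta) \in \FF_2$. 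Now I invoke the hypothesis $c_{j,k,\ell} = 0$ for $k > 2\ell$: this forces $c_{j,k,\ell} = 0$ whenever $\ell \le m$ and $k > 2m$, so for $k \le 2m$ the first sum uses only the submatrix $C_j^{2m\times m}$, and for $k > 2m$ the first sum vanishes entirely. Hence, writing $\vec{y}_{j,t} = C_j^{2m\times m}\vec{t}$ with associated real number $y_{j,t} = \sum_{k=1}^{2m} y_{j,t,k} 2^{-k}$, and setting $\sigma_j(\beta) = \sum_{k=1}^{\infty}\sigma_{j,k}(\beta) 2^{-k}$ (a dyadic rational, being a finite sum of finitely many nonzero columns shifted past position $2m$), we get $x_{j,n} = y_{j,t} \oplus \sigma_j(\beta)$ for every $j$ and every $0 \le t < 2^m$. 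Since all the quantities involved are dyadic rationals given by their finite expansions, the digit-wise sum $\oplus$ is unambiguous here, so $\bsx_{\beta 2^m + t} = \bsy_t \oplus \bssigma(\beta)$ where $\bsy_0,\ldots,\bsy_{2^m-1}$ is precisely the digital net with generating matrices $C_j^{2m\times m}$ and $\bssigma(\beta) = (\sigma_1(\beta),\sigma_2(\beta),\ldots)$ has dyadic-rational coordinates. This is the claimed description as a digitally shifted digital net.

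The only point requiring care — and the main obstacle, such as it is — is the bookkeeping on digit positions: verifying that the digits of $n$ above position $m$ are genuinely the digits of $\beta$, and that the truncation condition $c_{j,k,\ell}=0$ for $k > 2\ell$ does exactly two jobs at once (kills the $\beta$-independent part for $k \le 2m$ coming from columns $\ell > m$? no — rather, it guarantees the $t$-part lives in the top $2m$ rows and the shift part $\sigma_{j,k}(\beta)$ for $k \le 2m$ comes only from columns $\ell$ with $m < \ell$, which is automatically $\FF_2$-valued and finite). One should also note that no issue of infinite-versus-finite dyadic expansions arises because every coordinate of every point, of every net point, and of the shift is a dyadic rational represented by its terminating expansion, so $\oplus$ behaves as ordinary addition of the finite digit strings. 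With that, the identity $\bsx_{\beta 2^m + t} = \bsy_t \oplus \bssigma(\beta)$ holds for all $t$, completing the proof. $\hfill\Box$
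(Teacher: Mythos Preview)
Your argument is correct and follows essentially the same route as the paper: you split $n=\beta 2^m+t$, separate the $t$-contribution (which is $C_j^{2m\times m}\vec t$ in the top $2m$ rows and zero below, by the condition $c_{j,k,\ell}=0$ for $k>2\ell$) from the $\beta$-contribution (which is a fixed dyadic-rational shift), exactly as the paper does via a block-matrix decomposition of $C_j$. The only difference is notational---you work entrywise while the paper writes the same computation in block form.
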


\begin{proof}
For $n \in \{\beta 2^m, \beta 2^m+1, \ldots, \beta 2^m +2^m-1\}$ we write $n = a + \beta 2^m$ with $0 \le a < 2^m$. Then $\vec{n} = (\vec{a}^\top, \vec{0}_\infty^\top)^\top + (\vec{0}_{m}^\top, \vec{\beta}^\top)^\top$, where $\vec{a} = (n_0, n_1,\ldots, n_{m-1})^\top$, $\vec{\beta} = (n_{m}, n_{m+1},\ldots)^\top$ and $\vec{0}_z$ is the zero-vector of length $z$. We write
$$
C_{j} = \left( \begin{array}{ccc}
           & \vline &  \\
  C_{j}^{2 m \times m} & \vline & D_{j}^{2 m \times \NN} \\
           & \vline &   \\ \hline
    & \vline & \\
 0^{\NN \times m}  & \vline &  F_{j}^{\NN \times \NN} \\
   &   \vline       &
\end{array} \right) \in \FF_2^{\NN \times \NN},
$$
where $0^{\NN \times m}$ denotes the $\mathbb{N} \times m$ matrix whose entries are all $0 \in \mathbb{F}_2$. With this notation we have
$$C_j \vec{n}=\left( \begin{array}{c} C_{j}^{2 m \times m}  \vec{a} \\ 0 \\ 0 \\ \vdots \end{array} \right)  + \left( \begin{array}{c}
             \\
  D_{j}^{2 m \times \NN}  \\
          \\ \hline  \\
 F_{j}^{\NN \times \NN}  \\
   \end{array} \right) \vec{\beta}.$$ For the point set under consideration, the vector
\begin{equation*}
\vec{\sigma}_{\beta,j}:=\left( \begin{array}{c}
             \\
  D_{j,2 m \times \NN}  \\
          \\ \hline  \\
 F_{j,\NN \times \NN}  \\
   \end{array} \right) \vec{\beta}
\end{equation*}
is fixed. Let $\vec{\sigma}_{\beta,j} = (\sigma_{\beta,j,1}, \sigma_{\beta,j,2},\ldots)^\top$. By the assumption $c_{j,k, \ell} =0$ for all $k > 2\ell$ it also follows that $\sigma_{\beta,j,b} = 0$ for all $b$ large enough. Further, as $n$ runs through all elements in the set $\{\beta 2^m, \beta 2^m+1,\ldots, (\beta + 1) 2^m-1\}$, the vector $\vec{a}$ runs through all elements in the set $\mathbb{F}_2^m$. Thus the point set $\{\bsx_{\beta 2^m}, \bsx_{\beta 2^m+1},\ldots, \bsx_{\beta 2^m+2^m-1}\}$ is a digitally shifted digital net with generating matrices $C_j^{2m \times m}$, $j \in \mathbb{N}$ and digital shift vector $\bssigma_\beta = (\sigma_{\beta,j})_{j \in \mathbb{N}}$ where $\sigma_{\beta,j} = \sigma_{\beta,j,1} 2^{-1} + \sigma_{\beta,j,2} 2^{-2} + \cdots$ are dyadic rationals.
\end{proof}

\subsubsection*{The NRT weight function}
The properties of the digital sequence $\cS$ depend entirely on the properties of the generating matrices $(C_j)_{j \in \mathbb{N}}$. We now introduce a weight function which serves as a criterion for selecting good generating matrices.  Assume that the integer $k > 0$ has dyadic expansion $k = \kappa_0 + \kappa_1 2 + \cdots + \kappa_{a-2} 2^{a-2} + 2^{a-1}$ with $\kappa_i \in \{0,1\}$.  We define the NRT weight function $\mu_1$ (Niederreiter~\cite{nie86} and Rosenbloom-Tsfasman~\cite{RT}) weight) for nonnegative integers $k$ by
\begin{equation}\label{def_mu}
\mu_1(k) = \left\{\begin{array}{rl} a = 1 + \lfloor \log_2 k \rfloor & \mbox{if } k > 0, \\ 0 & \mbox{if } k = 0, \end{array} \right.
\end{equation}
where $\lfloor x \rfloor$ is the largest integer smaller or equal to $x$. For vectors $\bsk = (k_1,\ldots, k_s) \in \mathbb{N}_0^s$ we define the NRT weight by
\begin{equation*}
\mu_1(\bsk) = \mu_1(k_1) + \mu_1(k_2) + \cdots + \mu_1(k_s).
\end{equation*}

We now explain how the NRT weight is used to obtain a criterion for choosing good generating matrices. For $m \ge 1$ let $C_j^{2m \times m} \in \mathbb{F}_2^{2m \times m}$ denote the left-upper $2m \times m$ sub-matrix of $C_j \in \mathbb{F}_2^{\mathbb{N} \times \mathbb{N}}$. Further we set $\vec{k} = (\kappa_0, \kappa_1, \ldots, \kappa_{2m-1})^\top \in \mathbb{F}_2^{2m}$, where for $a < 2m$ we set $\kappa_{i} = 0$ for $a-1 < i \le 2m-1$. We define
\begin{align*}
\mathcal{D}_{m,s} = & \mathcal{D}(C_1^{2m \times m},\ldots, C_s^{2m \times m}) \\ = & \{\bsk = (k_1,\ldots, k_s) \in \mathbb{N}_0^s: (C_1^{2m \times m})^\top \vec{k}_1 + \cdots + (C_s^{2m \times m})^\top \vec{k}_s = \vec{0}_m \in \mathbb{F}_2^m\},
\end{align*}
where $\vec{0}_m$ denotes the column zero vector in $\FF_2^m$. Further we set $\mathcal{D}^\ast_{m,s} = \mathcal{D}_{m,s} \setminus \{\bszero\}$, where $\bszero$ denotes the zero-vector in $\mathbb{N}_0^s$. (The set $\mathcal{D}_{m,s}$ is related to the dual space of the row space of $((C_1^{2m \times m})^\top, \ldots, (C_s^{2m \times m})^\top)$.)

We define the minimal weight of $\mathcal{D}^\ast_{m,s}$ as
\begin{equation*}
\rho_{1,m,s} = \rho_{1,m,s}(\mathcal{D}^\ast_{m,s}) = \min_{\bsk \in \mathcal{D}_{m,s}^\ast} \mu_1(\bsk).
\end{equation*}
It can be shown that a large weight $\rho_{1,m,s}(\mathcal{D}^\ast_{m,s})$ for all $m \ge 1$ yields good distribution properties of the corresponding digital sequence. Therefore the goal is to construct generating matrices $(C_j)_{j \in \mathbb{N}}$ of digital sequences for which the minimal weight is in some sense large. Since this is only an intermediate step in our construction, we will not go into the details of relating the NRT weight to the distribution properties of the sequence, the interested reader may, for instance, consult \cite{nie87,np} for details.

\subsubsection*{Construction of generating matrices with large $\rho_{1,m,s}$}
We return to the digital construction scheme. We now introduce a construction of generating matrices with large minimal weight $\rho_{1,m,s}$. This is the first step in our construction of obtaining digital sequences which satisfy the bound in Theorem~\ref{thm1}.

Explicit constructions of suitable generating matrices $C_j \in \mathbb{F}_2^{\mathbb{N} \times \mathbb{N} }$ were obtained by Sobol'~\cite{sob67}, Niederreiter~\cite{nie88}, Tezuka~\cite{Tez}, Niederreiter-Xing~\cite{NX96} and others (see also \cite[Chapter~8]{DP10}). To make the construction fully explicit, we briefly describe a special case of generalized Niederreiter sequences introduced by Tezuka~\cite[Eq. (3)]{Tez}. The basic idea of this construction is based on Sobol's and Niederreiter's construction of the generating matrices. The construction is based on irreducible polynomials over the finite field $\mathbb{F}_2$. Let $p_1=x$ and $p_j \in \mathbb{F}_2[x]$, for $j \ge 2$, be the $(j-1)$st irreducible polynomial in a list of irreducible polynomials over $\mathbb{F}_2$ that is sorted in increasing order according to their degree $e_j = \deg(p_j)$, that is, $e_1 \le e_2 \le \cdots $ (the ordering of polynomials with the same degree is irrelevant; further, one could also use primitive polynomials 
instead of
irreducible polynomials).

Let $C_j = (c_{j,k,\ell})_{k,\ell \in \mathbb{N} }$ with $c_{j,k,\ell} \in \mathbb{F}_2$. We describe now how to obtain the element $c_{j,k,\ell}$ for $j, k, \ell \ge 1$. To do so, fix natural numbers $j$ and $k$. Take $i-1$ and $z$ to be respectively the main term and remainder when we divide $k-1$ by $e_j$, so that $k-1 = (i-1) e_j + z$ with $0 \le z < e_j$. Now consider the Laurent series expansion
\begin{equation*}
\frac{x^{e_j-z-1}}{p_j(x)^i} = \sum_{\ell =1}^\infty a_\ell(i,j,z) x^{-\ell} \in \mathbb{F}_2((x^{-1})).
\end{equation*}
Then for all $\ell \ge 1$ we set
\begin{equation*}
c_{j,k,\ell} = a_\ell(i,j,z).
\end{equation*}
Note that in this construction we have $c_{j,k,\ell} = 0$ for all $k > \ell$.

The weight function and constructions we introduced so far have been well studied. In the following we introduce a new weight function and construction of generating matrices which can be viewed as an extension of the constructions above. It has first been studied in \cite{D07,D08}.

\subsubsection*{A new weight function}
As mentioned above, we have not found the NRT weight to be sufficient to obtain explicit constructions of point sets and sequences satisfying the bound in Theorem~\ref{thm1}. In fact, \cite{BC} and \cite{Skr} use the NRT weight and additionally a Hamming weight to obtain their constructions. Here we use a generalization of the NRT weight (but we do not use the Hamming weight). We introduce this weight function in the following. Let $k = 2^{a_1-1} +  2^{a_2-1} + \cdots + 2^{a_\nu-1} \in \mathbb{N}$, where $a_1 > a_2 > \cdots > a_\nu > 0$. Then we define the weight function
\begin{equation}\label{def_mu2}
\mu_2(k) = \left\{\begin{array}{rl} a_1 + a_2 & \mbox{if } \nu \ge 2, \\ a_1 & \mbox{if } \nu = 1, \\ 0 & \mbox{if } k = 0. \end{array}  \right.
\end{equation}
For vectors $\bsk = (k_1,\ldots, k_s) \in \mathbb{N}_0^s$ we set
\begin{equation}\label{def_mu2_vec}
\mu_2(\bsk) = \mu_2(k_1) + \cdots + \mu_2(k_s).
\end{equation}
We can also define the minimal weight by
\begin{equation*}
\rho_{2,m,s} = \rho_{2,m,s}(\mathcal{D}^\ast_{m,s}) = \min_{\bsk \in \mathcal{D}^\ast_{m,s}} \mu_2(\bsk).
\end{equation*}
The main idea in this paper is to use $\rho_{2,m,s}$ as the criterion to choose generating matrices $(C_j)_{j \in \mathbb{N}}$ and to use it to prove Theorem~\ref{thm1}.

In the following we introduce the second part of our construction of digital sequences.

\subsubsection*{Construction of generating matrices with large $\rho_{2,m,s}$}
We first describe a method to obtain generating matrices $(C_j)_{j \in \mathbb{N}}$ for which $\rho_{2,m,s}$ is large. The following definition was used in \cite{D08} to obtain explicit construction of suitable sequences.

\begin{definition}\rm
The digit interlacing composition is defined by
\begin{eqnarray*}
\mathscr{D}: [0,1)^{2} & \to & [0,1) \\
(x_1, x_{2}) &\mapsto & \sum_{d=1}^\infty \sum_{r=1}^2
\xi_{r,d} 2^{-r - 2 (d-1)},
\end{eqnarray*}
where $x_r = \xi_{r,1} 2^{-1} + \xi_{r,2} 2^{-2} + \cdots$ for $1
\le r \le 2$. We also define this function for vectors by setting
\begin{eqnarray*}
\mathscr{D}: [0,1)^{\mathbb{N}} & \to & [0,1)^\mathbb{N} \\
(x_1, x_2, \ldots) &\mapsto & (\mathscr{D}(x_1, x_2),  \mathscr{D}(x_{3},x_{4}), \ldots),
\end{eqnarray*}
for point sets $\cP_{N} = \{\bsx_0,\bsx_1, \ldots, \bsx_{N-1}\} \subseteq [0,1)^{\mathbb{N} }$ by setting
\begin{equation*}
\mathscr{D}(\cP_{N}) = \{\mathscr{D}(\bsx_0), \mathscr{D}(\bsx_1), \ldots, \mathscr{D}(\bsx_{N-1})\}\subseteq[0,1)^{\mathbb{N}}
\end{equation*}
and sequences $\cS = (\bsx_0, \bsx_1, \ldots)$ by setting
\begin{equation*}
\mathscr{D}(\cS) = (\mathscr{D}(\bsx_0), \mathscr{D}(\bsx_1), \ldots).
\end{equation*}
\end{definition}

We comment here that the interlacing can also be applied to the generating matrices $C_1, C_2, \ldots$ of digital nets or digital sequences directly as described in \cite[Section~4.4]{D08}. This is done in the following way: Let $C_1, C_2, \ldots$ be generating matrices of a digital net or digital sequence and let $\vec{c}_{j,k}$ denote the $k$th row of $C_j$. We define matrices $D_1, D_2, \ldots$, where the $k$th row of $D_j$ is given by $\vec{d}_{j,k}$, in the following way: For all $j \ge 1$, $u \ge 0$ and $1 \le v \le 2$ let
\begin{equation*}
\vec{d}_{j,2 u  + v} = \vec{c}_{2 (j-1)  + v, u+1}
\end{equation*}
It is easy to show that if $C_1, C_2, \ldots$ are the generating matrices of a digital net $\cP_{N}$ or digital sequence $\cS$ respectively, then the matrices $D_1, D_2, \ldots$ defined above, are the generating matrices of $\mathscr{D}(\cP_{N})$ or $\mathscr{D}(\cS)$ respectively. In particular, $\mathscr{D}(\cP_{N})$ is a digital net and $\mathscr{D}(\cS)$ is a digital sequence.

In the proof of Theorem~\ref{thm1} below we show that the following explicit construction satisfies the $\mathcal{L}_q$ discrepancy bounds:
\begin{construction}\label{construction}
Let $C_j \in \mathbb{F}_2^{\mathbb{N} \times \mathbb{N} }$ be defined as above (based on a special case of Tezuka's construction \cite{Tez} of generalized Niederreiter sequences as described above) and let $\cS$ in $[0,1)^{\mathbb{N} }$ denote the digital sequence obtained from these generating matrices. Then the sequence $\mathscr{D}(\cS) \subset [0,1)^{\mathbb{N}}$ provides an example of an explicit construction of a sequence satisfying Theorem~\ref{thm1}.
\end{construction}

Let $\cS$ be a digital sequence as defined in Construction~\ref{construction}. Since the generating matrices $C_j = (c_{j,k,\ell})_{k, \ell \in \mathbb{N}}$ of $\cS$ satisfy $c_{j, k, \ell} = 0$ for $k > \ell$, the generating matrices $D_j = (d_{j,k,\ell})_{j,k,\ell \in \mathbb{N}}$ for the sequence $\mathscr{D}(\cS)$ satisfy $d_{j,k,\ell} = 0$ for $k > 2\ell$.

Finally we describe how to obtain, for each $N \in \mathbb{N}$, a finite point set $\widehat{\cP}_{N} \subset [0,1]^{\mathbb{N}}$, whose projection onto the first $s$ coordinates achieves the optimal order of convergence of the $\mathcal{L}_q$ discrepancy. To do so, we use a propagation rule introduced in \cite{CS02}. In Section~\ref{cor_N} we show that the subset
\begin{equation}\label{P_tilde}
\widetilde{\cP}_{N,s}:= \mathscr{D}(\cP_{2^m,2s}) \cap
\left(\left[0,\frac{N}{2^m}\right) \times [0,1)^{s-1}\right)
\end{equation}
contains exactly $N$ points. Then we define the point set
\begin{equation}\label{Npoints}
\widehat{\cP}_{N,s}:=\left\{\left(\frac{2^m}{N} x_1,x_2,\ldots,x_s\right)\, :
\, (x_1,x_2,\ldots,x_s) \in \widetilde{\cP}_{N,s}\right\}.
\end{equation}
Further we show in Section~\ref{cor_N} that the point set $\widehat{\cP}_{N,s}$ satisfies the bound in Corollary~\ref{cor1}.

\subsection{The essential property}

The construction in the previous subsection is a special case of a more general construction principle for infinite-dimensional sequences which satisfy Theorem~\ref{thm1}. We describe this in the following.

\begin{definition}\rm\label{def_net}
Let $m \ge 1$ and $0 \le t \le 2 m$  be  natural
numbers. Let $\FF_2$ be the finite field of order $2$ and
let $C_1,\ldots, C_s \in \FF_2^{2 m \times m}$ with $C_j = (c_{j,1},
\ldots, c_{j, 2 m})^\top$. If for all $1 \le i_{j,\nu_j} < \cdots <
i_{j,1} \le 2 m$ with $$\sum_{j = 1}^s \sum_{l=1}^{\min(\nu_j,2)} i_{j,l}  \le
2 m - t$$ the vectors
$$c_{1,i_{1,\nu_1}}, \ldots, c_{1,i_{1,1}}, \ldots,
c_{s,i_{s,\nu_s}}, \ldots, c_{s,i_{s,1}}$$ are linearly independent
over $\FF_2$, then the digital net with generating matrices
$C_1,\ldots, C_s$ is called an order $2$ digital $(t,m,s)$-net over $\FF_2$.
\end{definition}

\begin{definition}\rm\label{def_seq}
Let $t \ge 0$ be an integer. Let $C_1,\ldots, C_s \in \FF_2^{\mathbb{N} \times \mathbb{N}}$ and let $C_{j}^{2 m \times m}$ denote the left upper $2 m \times m$ submatrix of  $C_j$. If for all $m > t/2$ the matrices $C_{1}^{2 m \times m},\ldots, C_{s}^{2 m \times m}$ generate an order $2$ digital $(t, m,s)$-net over $\FF_2$, then the digital sequence with generating matrices $C_1,\ldots, C_s$ is called an {\it order $2$ digital $(t,s)$-sequence over $\FF_2$}.
\end{definition}

From \cite[Lemma~4]{Tez} we obtain that generalized Niederreiter sequences are digital $(t',s)$-sequences with
\begin{equation*}
t' = \sum_{j=1}^s (e_j-1).
\end{equation*}
A special case of \cite[Theorem~4.12]{D08} is the following result (set $d=2$ in \cite[Theorem~4.12]{D08}).
\begin{theorem}
Let $\cS$ be a digital sequence such that $\cS_s$ is a digital $(t',s)$-sequence. Then the projection of the sequence $\mathscr{D}(\cS) \subset [0,1)^{\mathbb{N}}$ onto the first $s \ge 1$ coordinates is an order $2$ digital $(t,s)$-sequence over $\mathbb{F}_2$ with
\begin{equation*}
t = s + 2 t'.
\end{equation*}
\end{theorem}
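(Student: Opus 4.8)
The plan is to verify the defining condition of Definition~\ref{def_net} directly for the interlaced generating matrices, uniformly in $m$. Fix $s\ge 1$. By the interlacing rule recorded above, the first $s$ coordinates of $\mathscr{D}(\cS)$ come from the first $2s$ coordinates of $\cS$ by interlacing consecutive pairs: if $C_1,C_2,\dots$ are the generating matrices of $\cS$ and $D_1,D_2,\dots$ those of $\mathscr{D}(\cS)$, then the row of index $2u+v$ of $D_j$ equals the row of index $u+1$ of $C_{2(j-1)+v}$ for $u\ge 0$ and $v\in\{1,2\}$. Consequently $D_j^{2m\times m}$ is assembled from all $m$ rows of $C_{2j-1}^{m\times m}$ (in the odd positions) and all $m$ rows of $C_{2j}^{m\times m}$ (in the even positions); in particular the row of index $i$ of $D_j^{2m\times m}$ is the row of index $\lceil i/2\rceil$ of $C_{2j-1}^{m\times m}$ when $i$ is odd, and of $C_{2j}^{m\times m}$ when $i$ is even. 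The hypothesis on $\cS$ says precisely that $C_1^{m\times m},\dots,C_{2s}^{m\times m}$ generate a classical (order~$1$) digital $(t',m,2s)$-net over $\FF_2$ for every $m$; equivalently, whenever $d_1,\dots,d_{2s}\ge 0$ satisfy $d_1+\cdots+d_{2s}\le m-t'$, the union over $i$ of the first $d_i$ rows of $C_i^{m\times m}$ is linearly independent over $\FF_2$.

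Now fix $m>t/2$ with $t=s+2t'$ (so $m>t'$, and the net property above is non-vacuous) and let $1\le i_{j,\nu_j}<\cdots<i_{j,1}\le 2m$, $j=1,\dots,s$, satisfy $\sum_{j=1}^s\sum_{l=1}^{\min(\nu_j,2)}i_{j,l}\le 2m-t$; I must show that the corresponding rows of the $D_j^{2m\times m}$ are linearly independent over $\FF_2$. Using the translation above, for each $j$ let $D_{2j-1}$ (resp.\ $D_{2j}$) be the largest value of $\lceil i_{j,l}/2\rceil$ over the $l$ with $i_{j,l}$ odd (resp.\ even), or $0$ if there is none; then every chosen $D_j$-row lies among the first $D_{2j-1}$ rows of $C_{2j-1}^{m\times m}$ together with the first $D_{2j}$ rows of $C_{2j}^{m\times m}$. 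Hence, by the digital $(t',m,2s)$-net property, it suffices to prove that $\sum_{i=1}^{2s}D_i\le m-t'$.

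The heart of the argument is the elementary bound
\[
D_{2j-1}+D_{2j}\le \tfrac12\sum_{l=1}^{\min(\nu_j,2)}i_{j,l}+\tfrac12\qquad(1\le j\le s),
\]
proved by a short case distinction on the parities of the two largest chosen indices $i_{j,1},i_{j,2}$ (the cases $\nu_j\in\{0,1\}$ being immediate): if $i_{j,1}$ and $i_{j,2}$ have opposite parity they realise the two maxima, so $D_{2j-1}+D_{2j}\le\lceil i_{j,1}/2\rceil+\lceil i_{j,2}/2\rceil\le(i_{j,1}+i_{j,2}+1)/2$; if they have the same parity, one of $D_{2j-1},D_{2j}$ is at most $\lceil i_{j,1}/2\rceil$ and the other is governed by a chosen index of the opposite parity, which is at most $i_{j,2}-1$, again giving a sum at most $(i_{j,1}+i_{j,2})/2$. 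Summing over $j$ and invoking the weight constraint,
\[
\sum_{i=1}^{2s}D_i=\sum_{j=1}^s\bigl(D_{2j-1}+D_{2j}\bigr)\le\tfrac12(2m-t)+\tfrac{s}{2}=m-\tfrac{t-s}{2}=m-t',
\]
the last equality being exactly where $t=s+2t'$ enters. Thus the union over $i$ of the first $D_i$ rows of $C_i^{m\times m}$ is linearly independent over $\FF_2$, hence so is the chosen sub-collection of $D_j$-rows. Since $m>t/2$ was arbitrary, Definition~\ref{def_seq} is satisfied: the first $s$ coordinates of $\mathscr{D}(\cS)$ form an order~$2$ digital $(t,s)$-sequence over $\FF_2$ with $t=s+2t'$, which is the asserted special case $d=2$ of \cite[Theorem~4.12]{D08}.

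I expect the displayed combinatorial inequality to be the only genuine obstacle: one must see that each of the two one-dimensional matrices extracted from the $j$th interlaced coordinate is charged only for (roughly) half of $i_{j,1}$ and half of $i_{j,2}$ — the order-$2$ phenomenon that only the two leading selected indices per coordinate enter the weight — and that the $s$ leftover halves sum to exactly $s/2$, which is precisely what the summand $s$ in $t=s+2t'$ is there to absorb. Everything else is routine bookkeeping about which row of which $C_i$ a given row of $D_j$ is.
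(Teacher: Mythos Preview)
Your argument is correct. The paper does not give a proof of this theorem; it merely cites \cite[Theorem~4.12]{D08} and specialises to $d=2$. What you have written is precisely the direct verification of that result in the case $d=2$: translate the rows of $D_j^{2m\times m}$ back to rows of $C_{2j-1}^{m\times m}$ and $C_{2j}^{m\times m}$ via the interlacing rule, bound the two ``top row indices'' $D_{2j-1},D_{2j}$ needed in each pair by the displayed parity case analysis, sum over $j$, and invoke the classical $(t',m,2s)$-net property. The key combinatorial inequality $D_{2j-1}+D_{2j}\le\tfrac12\sum_{l\le\min(\nu_j,2)}i_{j,l}+\tfrac12$ is exactly the mechanism by which the extra $s$ in $t=s+2t'$ arises, and your case split establishes it cleanly.

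Two small remarks. First, you implicitly use that the assignment of chosen $D_j$-rows to $C$-rows is injective (so that linear independence of the larger family transfers to the chosen subfamily); this is immediate since distinct odd (respectively even) indices have distinct $\lceil\,\cdot\,/2\rceil$, and different $j$ feed into disjoint pairs $C_{2j-1},C_{2j}$, but it is worth saying. Second, the paper's hypothesis is phrased as ``$\cS_s$ is a digital $(t',s)$-sequence'', whereas your proof (correctly) uses that the first $2s$ coordinates form a $(t',2s)$-sequence; this is indeed what is required for the interlacing and what \cite[Theorem~4.12]{D08} assumes, so you have read the intended hypothesis rather than the literal one.
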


The main property of order $2$ digital $(t,m,s)$-nets with generating matrices $C_1, \ldots, C_s \in \mathbb{F}_2^{2 m \times m}$ is that the minimum weight of $\mathcal{D}(C_1,\ldots, C_s)$ satisfies
\begin{equation}\label{2min_weight}
\rho_{2,m,s}(\mathcal{D}^\ast(C_1,\ldots, C_s)) > 2m-t.
\end{equation}
This property follows directly from the linear independence property of the rows of the generating matrices. Consider now the sequence $\mathscr{D}(\cS)$. \cite[Proposition~1]{D10} implies that the first $2^m$ points of the projection of $\mathscr{D}(\cS)$ onto the first $s$ coordinates is also a digital $(t,m,s)$-net. Thus the linear independence properties of certain sets of rows of the generating matrices implies that we also have
\begin{equation}\label{min_weight}
\rho_{1,m,s}(\mathcal{D}^\ast(C_1,\ldots, C_s)) > m- t.
\end{equation}

\subsubsection*{Some background on higher order nets}

Definitions~\ref{def_net} and \ref{def_seq} are derived from numerical integration of smooth functions studied in \cite{D08}. We give only a very rough description of these results in the following, since we do not rely on them for our purposes here. Let $\bsx_0, \bsx_1, \ldots, \bsx_{2^m-1} \in [0,1]^s$ be an order $2$ digital $(t,m,s)$-net over $\mathbb{F}_2$. Let $f:[0,1]^s \to \mathbb{R}$ be a function whose partial mixed derivatives up to order $2$ in each variable are square integrable, that is,
\begin{equation*}
\int_{[0,1]^s} \left|\frac{\partial^{\boldsymbol{\tau} } f}{\partial \bsx^{\boldsymbol{\tau} }}(\bsx) \right|^2 \rd \bsx < \infty,
\end{equation*}
where for $\boldsymbol{\tau} = (\tau_1,\tau_2,\ldots, \tau_s) \in \{0, 1, 2\}^s$, the expression $\frac{\partial^{\boldsymbol{\tau} } f}{\partial \bsx^{\boldsymbol{\tau} }}(\bsx)$ denotes the partial mixed derivatives of order $\tau_j$ in coordinate $j$. Then
\begin{equation*}
\left|\int_{[0,1]^s} f(\bsx) \rd \bsx - \frac{1}{2^m} \sum_{n=0}^{2^m-1} f(\bsx_n) \right| \ll_{f,s,t} \frac{m^{2 s}}{2^{2 m}}.
\end{equation*}
See \cite{D08} for details.

\section{A bound on the $\mathcal{L}_q$ discrepancy of higher order digital sequences}\label{cor_N}

In this section we state a bound on the $\mathcal{L}_q$ discrepancy of the higher order digital sequences introduced in Section~\ref{sec_const}. Construction~\ref{construction} in Section~\ref{sec_const} is infinite dimensional, however in this section we only deal with the projection of those infinite dimensional sequences onto the first $s$ coordinates. To simplify the notation we write $\bsx_n$ instead of $\bsx_n^{(s)}$ in the remainder of the paper. The next theorem implies the first part of Theorem~\ref{thm1}.

\begin{theorem}\label{thm2}
For all even integers $q$ with $2 \le q < \infty$, the $\mathcal{L}_q$ discrepancy of the first $N \ge 2$ points of an order $2$ digital $(t,s)$-sequence $\cS_{s}$ in $[0,1)^s$ over $\mathbb{F}_2$ is bounded by
\begin{align*}
\mathcal{L}_q(\cP_{N,s})  \ll_{s, q} & \frac{r^{3/2-1/q} }{N} \sqrt{\sum_{v=1}^r m_v^{s-1} },
\end{align*}
where $N = 2^{m_1} + 2^{m_2} + \cdots + 2^{m_r}$ with $m_1 > m_2 > \cdots >  m_r \ge 0$.
\end{theorem}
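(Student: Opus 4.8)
The plan is to write the first $N$ points of the order $2$ digital $(t,s)$-sequence as a weighted superposition of finitely many digitally shifted order $2$ digital nets, to exploit that $q$ is even in order to expand $\mathcal{L}_q(\cP_{N,s})^q$ into a sum of Walsh coefficients, and then to estimate that sum by means of the large minimal weights $\rho_{2,m_v,s}$. Concretely, let $N=2^{m_1}+\cdots+2^{m_r}$ with $m_1>\cdots>m_r\ge0$ and split $\{0,1,\ldots,N-1\}$ into the $r$ consecutive blocks $B_v$ of lengths $2^{m_v}$. The starting index of $B_v$ is divisible by $2^{m_v}$, so Lemma~\ref{lem_net_seq} shows that $\mathcal{Q}_v:=\{\bsx_n:n\in B_v\}$ is a digitally shifted digital net with generating matrices $C_j^{2m_v\times m_v}$ and dyadic shift $\bssigma_v$; for those $v$ with $m_v>t/2$, Definition~\ref{def_seq} makes these matrices an order $2$ digital $(t,m_v,s)$-net, so $\rho_{2,m_v,s}>2m_v-t$ and $\rho_{1,m_v,s}>m_v-t$ by \eqref{2min_weight} and \eqref{min_weight}. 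The boundedly many blocks with $2m_v\le t$ contain a total number of points bounded in terms of $t$ (hence of $s$); since $|\delta|\le1$, their contribution to the $\mathcal{L}_q$ norm is $\ll_s 1/N$ and is absorbed into the constant. As the counting function is additive over blocks, $N\,\delta(\cP_{N,s};\bstheta)=\sum_{v=1}^r 2^{m_v}\,\delta(\mathcal{Q}_v;\bstheta)$, and since $q$ is an even integer,
\begin{align*}
N^q\,\mathcal{L}_q(\cP_{N,s})^q &= \int_{[0,1]^s}\Bigl(\sum_{v=1}^{r}2^{m_v}\,\delta(\mathcal{Q}_v;\bstheta)\Bigr)^{q}\rd\bstheta\\
&= \sum_{v_1,\ldots,v_q=1}^{r}\Bigl(\prod_{i=1}^{q}2^{m_{v_i}}\Bigr)\int_{[0,1]^s}\prod_{i=1}^{q}\delta(\mathcal{Q}_{v_i};\bstheta)\,\rd\bstheta .
\end{align*}

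Next I expand the local discrepancy function of a digitally shifted digital net in the Walsh basis in $\bstheta$: expanding each factor $1_{[0,\theta_j)}(\cdot)$ and each $\theta_j$ in a one-dimensional Walsh series and multiplying out gives $\delta(\mathcal{Q}_v;\bstheta)=\sum_{\bsk\in\NN_0^s}\widehat\delta_v(\bsk)\,\wal_{\bsk}(\bstheta)$, where the digital net property collapses $\widehat\delta_v(\bsk)$ to a sum, over the nonzero elements of the dual net $\Dcal_{m_v,s}$ (after the relevant dyadic truncation), of products of one-dimensional Walsh coefficients evaluated at the shift $\bssigma_v$ — the smooth part $\prod_j\theta_j$ of $\delta$ exactly cancels the contribution of the zero element of the dual net. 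Two consequences are used: (i) $\widehat\delta_v(\bsk)=0$ for every $\bsk\ne\bszero$ with $\mu_2(\bsk)\le 2m_v-t$ (and with $\mu_1(\bsk)\le m_v-t$), since such $\bsk$ cannot meet $\Dcal^\ast_{m_v,s}$; and (ii) $|\widehat\delta_v(\bsk)|$ is bounded by a quantity of shape $\ll_s 2^{-\mu_1(\bsk)}$, refined through $\mu_2(\bsk)$, so that it decays geometrically in $\mu_2(\bsk)$. By orthogonality of the Walsh system, $\int_{[0,1]^s}\prod_{i=1}^q\wal_{\bsk^{(i)}}(\bstheta)\,\rd\bstheta$ is $1$ if $\bsk^{(1)}\oplus\cdots\oplus\bsk^{(q)}=\bszero$ and $0$ otherwise, so each inner integral becomes
\begin{equation*}
\int_{[0,1]^s}\prod_{i=1}^{q}\delta(\mathcal{Q}_{v_i};\bstheta)\,\rd\bstheta=\sum_{\substack{\bsk^{(1)},\ldots,\bsk^{(q)}\in\NN_0^s\\ \bsk^{(1)}\oplus\cdots\oplus\bsk^{(q)}=\bszero}}\ \prod_{i=1}^{q}\widehat\delta_{v_i}\bigl(\bsk^{(i)}\bigr);
\end{equation*}
I will also use the crude bound $\bigl|\int_{[0,1]^s}\prod_i\delta(\mathcal{Q}_{v_i};\bstheta)\,\rd\bstheta\bigr|\le\prod_i\mathcal{L}_q(\mathcal{Q}_{v_i})$ coming from Hölder's inequality.

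It remains to bound $\sum_{v_1,\ldots,v_q}\bigl(\prod_{i}2^{m_{v_i}}\bigr)\sum_{\bsk^{(1)}\oplus\cdots\oplus\bsk^{(q)}=\bszero}\prod_{i}\widehat\delta_{v_i}(\bsk^{(i)})$ by $r^{3q/2-1}\bigl(\sum_{v=1}^{r}m_v^{s-1}\bigr)^{q/2}$; taking $q$-th roots and dividing by $N$ then finishes the proof. This is where the announced generalization of the Niederreiter--Rosenbloom--Tsfasman metric enters: it converts the relation $\bsk^{(1)}\oplus\cdots\oplus\bsk^{(q)}=\bszero$ together with the memberships $\bsk^{(i)}\in\Dcal_{m_{v_i},s}$ into an estimate controlled by $\sum_i\mu_2(\bsk^{(i)})$, together with a count of admissible frequency tuples of a prescribed total $\mu_2$-weight. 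The diagonal tuples $v_1=\cdots=v_q=v$ reduce to the single-net quantity, which the weighted count bounds by $\ll_{q,s}2^{-qm_v}m_v^{q(s-1)/2}$, the exponent $s-1$ reflecting the number of one-dimensional factors that can carry weight while frequencies of minimal admissible weight $\approx 2m_v$ dominate; multiplying by $2^{qm_v}$ and summing over $v$ yields $\ll_{q,s}\sum_{v}m_v^{q(s-1)/2}\le\bigl(\sum_v m_v^{s-1}\bigr)^{q/2}$ since $q\ge2$. For tuples in which not all $v_i$ coincide, the frequency vectors are forced to lie in several distinct dual nets $\Dcal_{m_{v_i},s}$, which only makes $\bsk^{(1)}\oplus\cdots\oplus\bsk^{(q)}=\bszero$ more restrictive; organizing these $\le r^q$ tuples according to the pattern of coincidences among the $v_i$, applying the same weighted count supplemented by the Hölder bound above, and bookkeeping the powers of $r$ and of the $m_v$ (where the $m_v^{s-1}$ are aggregated over the $r$ blocks in an $\ell^2$ fashion, and losses of the form $r^{1-1/q}$ and $r^{1/2}$ appear when comparing $\sum_v m_v^{(s-1)/2}$ with $\bigl(\sum_v m_v^{s-1}\bigr)^{1/2}$) produces the factor $r^{3q/2-1}$.

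The reduction to shifted nets and the orthogonality step are essentially bookkeeping once the Walsh expansion of the local discrepancy function of a digitally shifted digital net is in place. The substance, and the main obstacle, is the last step in two parts: first, establishing the vanishing/decay properties (i)--(ii) of the coefficients $\widehat\delta_v(\bsk)$ with the decay expressed through $\mu_2$ rather than only $\mu_1$ — this is exactly what the order $2$ net property and \eqref{2min_weight} are for; and second, carrying out the weighted count of admissible frequency $q$-tuples with constants depending only on $q$ and $s$ and uniformly in $N$. Keeping the off-diagonal contributions, where the $v_i$ are not all equal, under control without losing more than the stated power of $r$ is the delicate point, and it is precisely here that the generalized NRT metric is indispensable.
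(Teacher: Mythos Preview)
Your overall architecture---block decomposition via Lemma~\ref{lem_net_seq}, Walsh expansion, exploitation of even $q$ through orthogonality, and use of \eqref{2min_weight}---matches the paper, but the proposal is missing the one analytic ingredient that makes the argument go through: the Littlewood--Paley inequality for the Walsh system (Proposition~\ref{prop_skr}). The paper does \emph{not} expand $\bigl(\sum_v 2^{m_v}\delta(\mathcal{Q}_v;\cdot)\bigr)^q$ directly. Instead it first applies Proposition~\ref{prop_skr} to reduce $\mathcal{L}_q^2$ to an $\ell^2$-sum over dyadic blocks $\bsb$ of $\bigl(\int|\sigma_{\bsb}|^q\bigr)^{2/q}$, and only then, \emph{within a single block} $B(\bsb)$, uses orthogonality and even $q$ to write $\int|\sigma_{\bsb}|^q=\sum_{\bsell_1,\ldots,\bsell_{q-1}\in B(\bsb)} c(\bsell_1\oplus\cdots\oplus\bsell_{q-1})\prod_i c(\bsell_i)$. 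The point is that all $\bsell_i$ now share the same $\mu_1$-profile $\bsb$, and since $q-1$ is odd so does $\bsell_1\oplus\cdots\oplus\bsell_{q-1}$; this is exactly what makes the bounds in Lemma~\ref{lem_c_bound} and the count in Lemma~\ref{lem_aux} uniform over the tuple and tractable. Your direct expansion has no such localization: the constraint $\bsk^{(1)}\oplus\cdots\oplus\bsk^{(q)}=\bszero$ permits the $\bsk^{(i)}$ to live at wildly different $\mu_1$-scales, and your sketch gives no mechanism to control these cross-scale interactions.

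There is also a concrete error in your claim (i). The Walsh coefficient $\widehat\delta_v(\bsk)$ of the local discrepancy function in the variable $\bstheta$ does \emph{not} vanish merely because $\mu_2(\bsk)\le 2m_v-t$. By Lemma~\ref{lem1}, the $\bsell$th coefficient is a sum over $\bsz\in\NN_0^s$ of terms involving $\wal_{\bsell\oplus\lfloor 2^{\bsz+\nu(\bsell)-\bsone}\rfloor}(\bsx_n)$; it is this \emph{modified} frequency that must lie in $\Dcal_{m_v,s}^\ast$ for the net average to survive, and for suitable $\bsz$ it will, regardless of how small $\mu_2(\bsell)$ is. What the order~$2$ property actually buys (see the proof of Lemma~\ref{lem_c_bound}) is the lower bound $|\bsz|_1\ge 2m_v-t+1-2|\bsb|_1$, which feeds into a geometric decay in $|\bsz|_1$, not a vanishing in $\bsk$. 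Finally, your treatment of the off-diagonal tuples is not a proof: the phrases ``bookkeeping the powers of $r$'' and ``losses of the form $r^{1-1/q}$ and $r^{1/2}$ appear'' do not substitute for the actual estimates in Lemmas~\ref{lem_aux}--\ref{lem_Lq_bound_weak}, which are where the exponent $3/2-1/q$ is produced.
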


The proof of this result is shown in Section~\ref{sec_appendix}. Choosing $r=1$ in Theorem~\ref{thm2} yields the following corollary. This result implies the second part of Theorem~\ref{thm1}.
\begin{corollary}
Let $\cP_{2^m,s}$ be an order $2$ digital net. Then
\begin{equation*}
\mathcal{L}_q(\cP_{2^m,s}) \ll_{s,q} \frac{m^{(s-1)/2}}{2^m} \quad \mbox{for all } 2 \le q < \infty.
\end{equation*}
\end{corollary}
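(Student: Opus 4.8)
The plan is to obtain this corollary as a direct specialization of Theorem~\ref{thm2}, supplemented by the elementary monotonicity of $L^q$ norms on a probability space in order to pass from even integers $q$ to arbitrary real $q\in[2,\infty)$.

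First I would take $r=1$ and $m_1=m$ in Theorem~\ref{thm2}, so that $N=2^{m_1}=2^m$, the sum $\sum_{v=1}^{r}m_v^{s-1}$ collapses to the single term $m^{s-1}$, and $r^{3/2-1/q}=1$. Since $r=1$, the point set $\cP_{2^m,s}$ is precisely the initial segment of the order $2$ digital $(t,s)$-sequence, hence an order $2$ digital $(t,m,s)$-net (cf.\ the discussion around \eqref{min_weight}); moreover, inspecting the argument of Section~\ref{sec_appendix} in this one-block case shows that it uses nothing about $\cP_{2^m,s}$ beyond this net property, so the conclusion in fact holds for an arbitrary order $2$ digital net. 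Theorem~\ref{thm2} then gives
\[
\mathcal{L}_q(\cP_{2^m,s}) \ll_{s,q} \frac{m^{(s-1)/2}}{2^m}
\]
for every $m\ge 1$ and every even integer $q$ with $2\le q<\infty$.

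To remove the parity assumption, fix a real $q\in[2,\infty)$ and set $q'=2\lceil q/2\rceil$, the least even integer with $q'\ge q$; note $q\le q'\le q+2$, so $q'$ is determined by $q$ alone. Since $[0,1]^s$ carries a probability measure, Jensen's (equivalently H\"older's) inequality yields $\mathcal{L}_q(\cP_{2^m,s})=\|\delta(\cP_{2^m,s};\cdot)\|_{L^q([0,1]^s)}\le\|\delta(\cP_{2^m,s};\cdot)\|_{L^{q'}([0,1]^s)}=\mathcal{L}_{q'}(\cP_{2^m,s})$. Applying the even-exponent bound with $q'$ in place of $q$ gives $\mathcal{L}_q(\cP_{2^m,s})\ll_{s,q'}m^{(s-1)/2}/2^m$, and since $q'$ depends only on $q$, the implied constant depends only on $s$ and $q$, which is the assertion.

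I do not anticipate a genuine obstacle here: the substance lies entirely in Theorem~\ref{thm2}, and this corollary is merely its specialization to a single dyadic block together with a one-line norm comparison. The only points deserving a moment's care are (i) confirming that the $r=1$ instance of the proof of Theorem~\ref{thm2} really needs only the order $2$ digital net structure, so that the statement may legitimately be phrased for order $2$ digital nets rather than only for initial segments of sequences, and (ii) the bookkeeping showing that the constant in the non-even case still depends on $s$ and $q$ alone.
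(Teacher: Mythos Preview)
Your proposal is correct and follows the same approach as the paper, which simply states that the corollary follows by choosing $r=1$ in Theorem~\ref{thm2}. You are in fact more careful than the paper on two minor points: the explicit passage from even $q$ to arbitrary $q\in[2,\infty)$ via monotonicity of $L^q$ norms (which the paper invokes only implicitly here, though it uses the same reduction in the proof of Corollary~\ref{cor2}), and the observation that the $r=1$ case of the argument in Section~\ref{sec_appendix} uses only the order~$2$ digital net property.
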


The next corollary shows that the optimal convergence rate can be obtained for any $N \in \mathbb{N}$ using an idea from \cite{CS02}.
\begin{corollary}\label{cor2}
For each $s \ge 1$ and $N \ge 2$ one can explicitly construct a point set $\widehat{\cP}_{N,s} \subset [0,1)^s$ such that
\begin{equation*}
\mathcal{L}_q(\widehat{\cP}_{N,s}) \ll_{s,q} \frac{(\log N)^{(s-1)/2}}{N} \quad \mbox{for all } 2 \le q < \infty.
\end{equation*}
\end{corollary}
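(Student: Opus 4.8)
The plan is to apply the Chen–Skriganov propagation rule described in Section~\ref{sec_const}, deriving the bound for $\widehat{\cP}_{N,s}$ from the bound for order $2$ digital nets already established in Theorem~\ref{thm2} (with $r=1$). Fix $N \ge 2$ and let $m = \lceil \log_2 N \rceil$, so that $N \le 2^m < 2N$. Start from the order $2$ digital $(t,2s)$-sequence of Construction~\ref{construction} in dimension $2s$, take its first $2^m$ points $\mathscr{D}(\cP_{2^m,2s})$, and form the set $\widetilde{\cP}_{N,s}$ as in \eqref{P_tilde}, i.e. the points whose first coordinate lies in $[0,N/2^m)$. First I would verify that $\widetilde{\cP}_{N,s}$ contains exactly $N$ points: because the first coordinate of a digital net generated from a $2m \times m$ matrix only depends on the first $m$ digits and the net is a $(t,m,2s)$-net, the first coordinates are equidistributed among the dyadic intervals of length $2^{-m}$ (for $m$ large relative to $t$; for small $N$ one absorbs the finitely many cases into the implied constant), so exactly $N$ of the $2^m$ points have first coordinate in $[0,N/2^m)$. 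Then I would rescale the first coordinate by the factor $2^m/N$ as in \eqref{Npoints} to obtain $\widehat{\cP}_{N,s} \subset [0,1)^s$.

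The core estimate is to bound $\mathcal{L}_q(\widehat{\cP}_{N,s})$ by $\mathcal{L}_q(\mathscr{D}(\cP_{2^m,2s}))$ up to a constant depending only on $q$ and $s$. The key point is that stretching the first coordinate by $2^m/N \in (1,2]$ and restricting to the slab $[0,N/2^m) \times [0,1)^{s-1}$ distorts the local discrepancy function in a controlled way. Writing $\delta_N$ for the local discrepancy function of $\widehat{\cP}_{N,s}$ and $\delta$ for that of $\mathscr{D}(\cP_{2^m,2s})$ (projected onto the first $s$ coordinates), one shows pointwise that $|\delta_N(\bstheta)|$ is bounded by a constant times $\sup_{\theta_1' } |\delta(\theta_1',\theta_2,\ldots,\theta_s)|$ evaluated along the stretched first coordinate, plus a negligible term coming from $|N/2^m - 1| \le 1/N$ times the volume; integrating over $\bstheta$ and using the factor $2^m/N \le 2$ in the change of variables for $\theta_1$ gives $\mathcal{L}_q(\widehat{\cP}_{N,s}) \ll_{q,s} \mathcal{L}_q(\mathscr{D}(\cP_{2^m,2s})_{s})$, where $\mathscr{D}(\cP_{2^m,2s})_{s}$ denotes the projection of $\mathscr{D}(\cP_{2^m,2s})$ onto its first $s$ coordinates. (This is precisely the device from \cite{CS02}; the only nonroutine bookkeeping is tracking that the restriction to the slab changes the count by an integer and the rescaling is by a bounded factor.)

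Now I would invoke the machinery already in place: by the theorem of \cite{D08} quoted in Section~\ref{sec_const}, $\mathscr{D}(\cS)$ with $\cS_{2s}$ a digital $(t',2s)$-sequence is an order $2$ digital $(t,2s)$-sequence with $t = 2s + 2t'$, hence its first $2^m$ points form an order $2$ digital $(t,m,2s)$-net, and in particular the projection onto the first $s$ coordinates is an order $2$ digital $(t,m,s)$-net. Applying Theorem~\ref{thm2} with $r=1$ (the corollary stated immediately after it) to this net yields $\mathcal{L}_q(\mathscr{D}(\cP_{2^m,2s})_s) \ll_{s,q} m^{(s-1)/2}/2^m$. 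Combining with the previous paragraph and using $m = \lceil \log_2 N \rceil \asymp \log N$ and $2^m \asymp N$ gives $\mathcal{L}_q(\widehat{\cP}_{N,s}) \ll_{s,q} (\log N)^{(s-1)/2}/N$, as claimed; the explicitness is inherited from Construction~\ref{construction}. The main obstacle is the second paragraph: making the comparison between the discrepancy of the stretched-and-truncated point set and that of the original net fully rigorous, in particular checking that the truncation really yields exactly $N$ points and controlling the error terms in the local discrepancy uniformly in $\bstheta$ so that the $\mathcal{L}_q$ norm transfers with only a constant loss.
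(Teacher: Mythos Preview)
Your overall strategy is the same as the paper's: apply the Chen--Skriganov propagation rule \eqref{P_tilde}--\eqref{Npoints} to the order $2$ digital net $\mathscr{D}(\cP_{2^m,2s})$ and then invoke Theorem~\ref{thm2} with $r=1$. Two points deserve correction.

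First, the discrepancy comparison (your second paragraph) is simpler than you suggest, and the version you sketch is not quite right. There is no need for a supremum over $\theta_1'$ or for any ``negligible term'': for $\bstheta \in [0,1]^s$, since $N 2^{-m} \theta_1 \le N 2^{-m}$, the counting function of $\widehat{\cP}_{N,s}$ in $[\bszero,\bstheta)$ coincides with the counting function of $\mathscr{D}(\cP_{2^m,2s})$ in $[0,N2^{-m}\theta_1)\times\prod_{j\ge 2}[0,\theta_j)$, and hence
\[
N\,\delta(\widehat{\cP}_{N,s};\bstheta) \;=\; 2^m\,\delta\bigl(\mathscr{D}(\cP_{2^m,2s});\,(N2^{-m}\theta_1,\theta_2,\ldots,\theta_s)\bigr)
\]
exactly. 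A change of variable $\theta_1 \mapsto N2^{-m}\theta_1$ then gives $(N\mathcal{L}_q(\widehat{\cP}_{N,s}))^q \le \tfrac{2^m}{N}(2^m\mathcal{L}_q(\mathscr{D}(\cP_{2^m,2s})))^q$, hence $\mathcal{L}_q(\widehat{\cP}_{N,s}) \le (2^m/N)^{1+1/q}\mathcal{L}_q(\mathscr{D}(\cP_{2^m,2s}))$. Your proposed route via $\sup_{\theta_1'}$ would not transfer to an $\mathcal{L}_q$ bound without further work, and your side claim $|N/2^m-1|\le 1/N$ is false (e.g.\ $N=5$, $m=3$).

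Second, there is some dimensional confusion: $\mathscr{D}(\cP_{2^m,2s})$ is already $s$-dimensional (interlacing halves the number of coordinates), so writing ``$\mathscr{D}(\cP_{2^m,2s})_s$'' for its projection onto the first $s$ coordinates is redundant, and it is an order $2$ digital $(t,m,s)$-net, not a $(t,m,2s)$-net. For the claim that $\widetilde{\cP}_{N,s}$ has exactly $N$ points, the paper invokes \cite[Proposition~1]{D10} to get that each dyadic interval $[a2^{-m},(a+1)2^{-m})$ contains exactly one first coordinate; your sketch via the $(t,m,2s)$-net property with the caveat ``for $m$ large relative to $t$'' is weaker and not what is actually used.
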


\begin{proof}
It suffices to prove the result for all even integers $q$ with $2 \le q < \infty$. For given $N \ge 2$ choose $m \ge 1$ such that $2^{m-1} \le N < 2^m$. Then $\frac{2^m}{N} \le 2$. Let $\cP_{N,s}$ be the point set given by \eqref{Npoints}. It is elementary to check that the projection of $\mathscr{D}(\cP_{2^m, 2s})$ onto the first coordinate yields a point set which has exactly one point in each interval $[a 2^{-m}, (a+1) 2^{-m})$ for $0 \le a < 2^m$. This also follows from \cite[Proposition~1]{D10}. Thus $\cP_{N,s}$ contains exactly $N$ points.

Let $A([\bszero, \bstheta), N, \widehat{\cP}_{N,s}) = \sum_{n=0}^{N-1} 1_{[\bszero, \bstheta)}(\bsx_n)$ and let $\widetilde{\cP}_{N,s}$ be the point set given by \eqref{P_tilde}. Then we have
\begin{align*}
& (N \mathcal{L}_q(\widehat{\cP}_{N,s}))^q =  \int_{[0,1]^s} |\delta(\widehat{\cP}_{N,s}; \bstheta)|^q \,\mathrm{d} \bstheta \\ = & \int_{[0,1]^s} \left| A([0, N 2^{-m} \theta_1) \times \prod_{j=2}^s [0, \theta_j), N, \widetilde{\cP}_{N,s})  - 2^m \frac{N}{2^m} \theta_1 \theta_2 \cdots \theta_s \right|^q \,\mathrm{d} \bstheta \\  = & \frac{2^m}{N} \int_{0}^{N2^{-m}} \int_{[0,1]^{s-1}} \left| A([0,\bstheta), N, \widetilde{\cP}_{N,s}) - 2^m \theta_1 \theta_2 \cdots \theta_s\right|^q \,\mathrm{d} \bstheta \\ = & \frac{2^m}{N} \int_{0}^{N2^{-m}} \int_{[0,1]^{s-1}} \left| A([0,\bstheta), N, \mathscr{D}(\cP_{N,2s})) - 2^m \theta_1 \theta_2 \cdots \theta_s\right|^q \,\mathrm{d} \bstheta \\ \le & \frac{2^m}{N} (2^m \mathcal{L}_q(\mathscr{D}(\cP_{2^m, 2s})))^q.
\end{align*}
Thus we obtain $$\mathcal{L}_q(\widehat{\cP}_{N,s}) \le \left(\frac{2^m}{N} \right)^{1+1/q} \mathcal{L}_q(\mathscr{D}(\cP_{2^m,2s}) \le 3 \mathcal{L}_q(\mathscr{D}(\cP_{2^m,2s})) $$ and therefore
\begin{align*}
\mathcal{L}_q(\widehat{\cP}_{N,s}) \ll_{s,q} & \frac{m^{(s-1)/2}}{N}  \ll_{s,q} \frac{(\log N)^{(s-1) /2}}{N}.
\end{align*}
\end{proof}

The proof of Theorem~\ref{thm2} is presented in the next section.

\section{The proof of Theorem~\ref{thm2}}\label{sec_appendix}

The main analytical tool to prove the bound in Theorem~\ref{thm2} are Walsh functions. These are introduced in next subsection. The Walsh series expansion of the local discrepancy function is given in Subsection~\ref{Walsh_discrepancy}. Finally, the proof of Theorem~\ref{thm2} is presented in Subsection~\ref{subsec_proof}.

\subsection{Walsh functions and some of their properties}\label{sect_walsh}

In this section we introduce Walsh functions in base $2$ (see \cite{chrest,walsh}).

\begin{definition}\rm
For a non-negative integer $k$ with dyadic expansion
\[
   k = \kappa_{a-1} 2^{a-1} + \cdots + \kappa_1 2 + \kappa_0,
\]
with $\kappa_i \in \{0,1\}$ and $x \in [0,1)$ with dyadic expansion $$x =
\frac{x_1}{2}+\frac{x_2}{2^2}+\cdots $$ (unique in the sense that
infinitely many of the $x_i$ must be zero), we define the Walsh function
$\wal_{k}:[0,1) \rightarrow \{-1,1\}$ by
\[
  \wal_{k}(x) := (-1)^{x_1 \kappa_0 + \cdots + x_a \kappa_{a-1}}.
\]
\end{definition}

\begin{definition}\rm
For dimension $s \ge 2$, $\bsx = (x_1, \ldots, x_s) \in [0,1)^s$ and $\bsk = (k_1,
\ldots, k_s) \in \nat_0^s$ we define $\wal_{\bsk} : [0,1)^s
\rightarrow \{-1,1\}$ by
\[
   \wal_{\bsk}(\bsx) := \prod_{j=1}^s \wal_{k_j}(x_j).
\]
\end{definition}
Walsh functions are orthogonal in $\mathcal{L}_2$, that is, for any $\bsk, \bsell \in \mathbb{N}_0^s$ we have
\begin{equation}\label{walsh_orthogonal}
\int_{[0,1]^s} \wal_{\bsk}(\bsx) \wal_{\bsell}(\bsx) \rd \bsx = \left\{\begin{array}{rl} 1 & \mbox{if } \bsk = \bsell, \\ 0 & \mbox{otherwise}. \end{array} \right.
\end{equation}
Further, they are characters with respect to digital nets. That is, let $\cP_{2^m,s}$ be a digital net with generating matrices $C_1,\ldots, C_s$, then (cf. \cite[Lemma~4.75]{DP10})
\begin{equation}\label{char_prop}
\frac{1}{2^m} \sum_{n=0}^{2^m-1} \wal_{\bsk}(\bsx_n) = \left\{\begin{array}{rl} 1 & \mbox{if }  \bsk \in \mathcal{D}(C_1,\ldots, C_s), \\ 0 & \mbox{otherwise}. \end{array} \right.
\end{equation}

The classical Walsh functions were first used in earlier investigations of discrepancy in \cite{CS00} and in the related contexts of numerical integration in \cite{LT94} and pseudo random numbers in \cite{Tez2}. For more properties of Walsh functions see \cite{chrest,walsh}, for Walsh functions in the context of discrepancy see for instance \cite{CS02,DP05b,lp,Skr}, or \cite[Appendix~A]{DP10} in the context of numerical integration.

In the following we also introduce an inequality from \cite{Skr}, which is based on the Littlewood-Paley inequality for the Walsh function system and Minkowski's inequality. This inequality plays a central role in the proof of Theorem~\ref{thm2}.  

The following proposition is \cite[Lemma~4.2]{Skr}.
\begin{proposition}\label{prop_skr}
For $\bsb = (b_1,\ldots, b_s) \in \mathbb{N}_0^s$ we set $$B(\bsb) = \{\bsell \in \mathbb{N}_0^s: \mu_1(\ell_j) = b_j \mbox{ for } 1 \le j \le s\}.$$

Let $2 \le q < \infty$. For functions $f \in \mathcal{L}_q([0,1]^s)$ and $\bsb \in \mathbb{N}_0^s$ let
\begin{equation*}
\sigma_{\bsb} f(\bstheta) = \sum_{\bsell \in B(\bsb)} \widehat{f}(\bsell) \wal_{\bsell}(\bstheta)
\end{equation*}
where $\widehat{f}(\bsell) = \int_{[0,1]^s} f(\bsx) \wal_{\bsell}(\bsx) \rd \bsx$ is the $\bsell$th Walsh coefficient of $f$. Then for any $f \in \mathcal{L}_q([0,1]^s)$ we have
\begin{equation*}
\left(\int_{[0,1]^s} |f(\bstheta)|^q \,\mathrm{d} \bstheta \right)^{1/q} \ll_{q,s} \left(\sum_{\bsb \in \mathbb{N}_0^s} \left(\int_{[0,1]^s} |\sigma_{\bsb}f(\bstheta)|^q \,\mathrm{d} \bstheta \right)^{2/q} \right)^{1/2}.
\end{equation*}
\end{proposition}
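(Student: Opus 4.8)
The plan is to prove the inequality in two stages. The first stage is a Littlewood--Paley inequality for the Walsh system, bounding $\|f\|_q$ by the $\mathcal{L}_q$ norm of the \emph{square function} $(\sum_{\bsb}|\sigma_{\bsb}f|^2)^{1/2}$ of the block projections. The second stage uses Minkowski's inequality in $\mathcal{L}_{q/2}$ to pass from this square function to the $\ell^2$-combination $(\sum_{\bsb}\|\sigma_{\bsb}f\|_q^2)^{1/2}$; this is the only place where the hypothesis $q\ge 2$ is needed.

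The key structural fact underlying the first stage is that the projections $\sigma_{\bsb}f$ are precisely the martingale differences of $f$ relative to the product dyadic filtration. In dimension one I would first note that $\mu_1(k)\le b$ is equivalent to $k<2^b$, so the partial sum $\sum_{\mu_1(k)\le b}\widehat{f}(k)\wal_k=\sum_{k<2^b}\widehat{f}(k)\wal_k$ is the orthogonal projection of $f$ onto the span of $\wal_0,\ldots,\wal_{2^b-1}$. By the definition of the Walsh functions this span is exactly the space of functions that are constant on the dyadic intervals $[a2^{-b},(a+1)2^{-b})$, so the partial sum equals the conditional expectation $\EE[f\mid\mathcal{F}_b]$, and hence $\sigma_b f=\EE[f\mid\mathcal{F}_b]-\EE[f\mid\mathcal{F}_{b-1}]$ is the $b$th martingale difference $d_b f$. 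The Burkholder--Davis--Gundy martingale square function inequality then yields, for $1<q<\infty$,
\[
\|f\|_q \ll_q \Bigl\|\Bigl(\sum_{b\ge 0}|\sigma_b f|^2\Bigr)^{1/2}\Bigr\|_q.
\]

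For $s\ge 2$ the Walsh system and the dyadic filtration factor as tensor products, with $\sigma_{\bsb}f$ the composition of the one-parameter difference operators acting in the separate coordinates. I would therefore apply the one-dimensional inequality successively in each variable, treating $f$ as a function valued in $\mathcal{L}_q$ of the remaining variables; since $\mathcal{L}_q$ is a UMD space for $1<q<\infty$, the vector-valued Burkholder inequality applies, and iterating over all $s$ coordinates gives
\[
\|f\|_q \ll_{q,s} \Bigl\|\Bigl(\sum_{\bsb\in\mathbb{N}_0^s}|\sigma_{\bsb}f|^2\Bigr)^{1/2}\Bigr\|_q,
\]
with a constant depending on $q$ and $s$ through the $s$-fold iteration. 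For the second stage I would rewrite the right-hand side as $\|\sum_{\bsb}|\sigma_{\bsb}f|^2\|_{q/2}^{1/2}$ and use $q/2\ge 1$ together with the triangle inequality in $\mathcal{L}_{q/2}$:
\[
\Bigl\|\sum_{\bsb}|\sigma_{\bsb}f|^2\Bigr\|_{q/2}\le\sum_{\bsb}\bigl\||\sigma_{\bsb}f|^2\bigr\|_{q/2}=\sum_{\bsb}\|\sigma_{\bsb}f\|_q^2,
\]
and taking square roots produces the asserted bound.

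I expect the main obstacle to be the multidimensional passage in the third paragraph: making the product Littlewood--Paley inequality rigorous with an explicit dependence of the constant on $s$ requires the vector-valued martingale machinery (equivalently, the UMD property of $\mathcal{L}_q$), and one must verify that the coordinatewise difference operators commute and that the full square function over $\bsb\in\mathbb{N}_0^s$ reassembles correctly from the iterated one-parameter square functions. By contrast, the martingale identification $\sigma_{\bsb}f=d_{\bsb}f$, the one-dimensional inequality, and the final Minkowski step are routine once this product structure is set up.
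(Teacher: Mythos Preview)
Your approach is correct and matches what the paper indicates: the paper does not prove this proposition itself but cites it as \cite[Lemma~4.2]{Skr}, noting explicitly that it ``is based on the Littlewood--Paley inequality for the Walsh function system and Minkowski's inequality,'' which is precisely your two-stage argument. Your identification of $\sigma_{\bsb}f$ with the multiparameter dyadic martingale differences and the iteration via the UMD property of $\mathcal{L}_q$ is the standard route to the product Littlewood--Paley inequality used here.
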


\subsection{The Walsh series expansion of the $\mathcal{L}_q$ discrepancy function}\label{Walsh_discrepancy}

We now obtain the Walsh series expansion for the local discrepancy function. In the following the symbol `$\sim$' shall denote equality in the $\mathcal{L}_2$ norm sense. It is only used to point out which function corresponds to a given Walsh series.

We need the following notation. For $a \in \mathbb{R}$ let
\begin{equation*}
1_{a\neq 0} = \left\{\begin{array}{rl} 1 & \mbox{if } a \neq 0, \\ 0 & \mbox{if } a = 0. \end{array} \right.
\end{equation*}
For $\bsa = (a_1,\ldots, a_s)$, let $|\bsa|_1 = |a_1| + \cdots + |a_s|$, $1_{\bsa \neq \bszero} = (1_{a_1\neq 0}, \ldots, 1_{a_s\neq 0})$, $|1_{\bsa \neq \bszero}|_1 = \sum_{j=1}^s 1_{a_j\neq 0}$, and for a subset $u \subseteq \{1,\ldots, s\}$ let $\bsa_u = (a_j)_{j \in u}$, and $(\bsa_u, \bszero)$ denote the vector whose $j$th component is $a_j$ for $j \in u$ and $0$ otherwise. Let $k \in \mathbb{N}$ have dyadic expansion $k = \kappa_0 + \kappa_1 2 + \cdots + \kappa_{a-2} 2^{a-2} + 2^{a-1}$ with $\kappa_i \in \{0,1\}$. Further let $\bsk=(k_1,\ldots, k_s) \in \mathbb{N}_0^s$, $\nu(\bsk) = (\mu_1(k_1), \ldots, \mu_1(k_s))$, where $\mu_1$ is given by \eqref{def_mu}, and  $$\bsk \oplus \lfloor 2^{\bsa + \nu(\bsk) - \bsone} \rfloor = (k_1 \oplus \lfloor 2^{a_1+\mu_1(k_1)-1} \rfloor, \ldots,  k_s \oplus \lfloor 2^{a_s+\mu_1(k_s)-1} \rfloor).$$

\begin{lemma}\label{lem1}
The local discrepancy function has Walsh series expansion
\begin{align*}
& \delta(\cP_{N,s}; \bstheta) \\ \sim &  \frac{1}{2^s N} \sum_{n=0}^{N-1} \sum_{\bsk \in \mathbb{N}_0^s \setminus \{\bszero\}} 2^{-\mu_1(\bsk)}  \wal_{\bsk}(\bsx_n) \sum_{\bsa \in \mathbb{N}_0^s} (-1)^{|1_{\bsa \neq \bszero}|_1} 2^{-|\bsa|_1} \wal_{\bsk \oplus \lfloor 2^{\bsa + \nu(\bsk)-\bsone} \rfloor} (\bstheta).
\end{align*}
\end{lemma}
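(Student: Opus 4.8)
The plan is to compute the Walsh coefficients of the local discrepancy function $\delta(\cP_{N,s};\bstheta)$ directly and identify the resulting series. First I would write $\delta(\cP_{N,s};\bstheta) = \frac{1}{N}\sum_{n=0}^{N-1} 1_{[\bszero,\bstheta)}(\bsx_n) - \prod_{j=1}^s\theta_j$ and observe that $1_{[\bszero,\bstheta)}(\bsx_n) = \prod_{j=1}^s 1_{[0,\theta_j)}(x_{n,j})$, so as a function of $\bstheta$ we need the Walsh expansion of the map $\bstheta \mapsto \prod_{j=1}^s 1_{[x_{n,j},1)}(\theta_j)$ (note $1_{[0,\theta_j)}(x_{n,j}) = 1_{(x_{n,j},1]}(\theta_j)$, equal a.e. to $1_{[x_{n,j},1)}(\theta_j)$). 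Because the Walsh system on $[0,1)^s$ is a tensor product, it suffices to expand the one-dimensional indicator $\theta \mapsto 1_{[y,1)}(\theta)$ in Walsh series, then multiply out the $s$ coordinates. The constant term $\prod_j\theta_j$ corresponds exactly to the $\bsk=\bszero$ term of $\prod_j 1_{[x_{n,j},1)}(\theta_j)$ (each factor has Walsh-$0$ coefficient $1-x_{n,j}$, but after subtracting the volume this is absorbed), which is why the final sum runs over $\bsk\in\NN_0^s\setminus\{\bszero\}$.

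The technical heart is the one-dimensional computation: for $y\in[0,1)$ with dyadic expansion $y = y_1 2^{-1} + y_2 2^{-2} + \cdots$, determine $\int_0^1 1_{[y,1)}(\theta)\wal_k(\theta)\rd\theta$ for each $k\in\NN_0$. For $k=0$ this is $1-y$. For $k>0$ with $k = \kappa_0 + \cdots + \kappa_{a-2}2^{a-2} + 2^{a-1}$, a standard computation (splitting $[y,1)$ into dyadic intervals aligned with the support of $\wal_k$) shows this coefficient is a signed dyadic multiple depending on the first $a$ digits of $y$; the clean way to package the answer is to recognize that $1_{[y,1)}$ restricted modulo the $2^a$-adic structure is itself a finite Walsh polynomial plus a tail. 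I would then assemble: each single-coordinate factor contributes a sum over $k_j$, and expanding the product $\prod_j$ over the non-constant parts gives a sum over nonempty $u\subseteq\{1,\ldots,s\}$ of terms with $k_j>0$ for $j\in u$; however the stated form folds this into a single sum over $\bsk\in\NN_0^s\setminus\{\bszero\}$ by letting $k_j=0$ encode "coordinate $j$ in the constant branch". The $2^{-s}$ prefactor, the $2^{-\mu_1(\bsk)}$ factor, the auxiliary summation over $\bsa\in\NN_0^s$ with weights $(-1)^{|1_{\bsa\neq\bszero}|_1}2^{-|\bsa|_1}$, and the shifted index $\bsk\oplus\lfloor 2^{\bsa+\nu(\bsk)-\bsone}\rfloor$ all emerge from writing the one-dimensional coefficient of $1_{[y,1)}$ at index $k$ in the form $2^{-1}\cdot 2^{-\mu_1(k)} \sum_{a\geq 0}(-1)^{1_{a\neq 0}}2^{-a}\wal_{k\oplus\lfloor 2^{a+\mu_1(k)-1}\rfloor}(y)$ — i.e. the coefficient of $\wal_k(\theta)$ is a Walsh polynomial evaluated at $y=x_{n,j}$, and reindexing the double sum $(k,a)$ appropriately produces exactly the claimed expansion. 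Summing over $n$ and dividing by $N$ finishes it.

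I expect the main obstacle to be the one-dimensional Walsh expansion of $1_{[y,1)}$ and, in particular, verifying that it can be written precisely in the form $\sum_k c_k(y)\wal_k(\theta)$ with $c_k(y)$ exhibiting the $2^{-\mu_1(k)}\sum_a(-1)^{1_{a\neq 0}}2^{-a}\wal_{k\oplus\lfloor 2^{a+\mu_1(k)-1}\rfloor}(y)$ structure; the bookkeeping with the "carry" index $\lfloor 2^{a+\mu_1(k)-1}\rfloor$ (which flips the leading digit position of $k$ up by $a$) requires care, as does checking that the formula for $y$ a dyadic rational holds in the $\mathcal{L}_2$ sense (hence the `$\sim$'). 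Everything after the one-dimensional case is routine: tensoring the coordinates, matching the $\bsk=\bszero$ part against $\prod_j\theta_j$ so it cancels, collecting the $2^{-s}$ and sign factors, and passing to the sum over $n$. I would present the one-dimensional lemma first as a separate displayed identity, prove it by the dyadic-interval splitting argument, and then obtain Lemma~\ref{lem1} by taking the product over $j=1,\ldots,s$ and averaging over the points $\bsx_0,\ldots,\bsx_{N-1}$.
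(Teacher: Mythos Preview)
Your overall strategy---expand $1_{[0,\theta)}(x)$ in Walsh functions coordinate by coordinate and substitute into the definition of $\delta$---is exactly what the paper does, and the paper's proof is in fact much shorter than your plan: it simply quotes the known bivariate Walsh expansion
\[
1_{[0,\theta)}(x) \sim \sum_{a=0}^\infty \sum_{k=0}^\infty (-1)^{1_{a\neq 0}}\, 2^{-a-1}\, 2^{-\mu_1(k)}\, \wal_k(x)\,\wal_{k\oplus\lfloor 2^{a+\mu_1(k)-1}\rfloor}(\theta)
\]
(from Fine, or \cite[Lemma~14.8]{DP10}) together with the Walsh expansion of $\theta$, and substitutes directly into $\delta(\cP_{N,s};\bstheta)=\frac{1}{N}\sum_n\bigl(\prod_j 1_{[0,\theta_j)}(x_{n,j})-\prod_j\theta_j\bigr)$. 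The $\bsk=\bszero$ part of the product is exactly $\prod_j\theta_j$ and cancels, leaving the sum over $\bsk\neq\bszero$.

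There is, however, a concrete error in your one-dimensional formula: you have the roles of the two variables reversed. In the lemma (and in Fine's identity above) it is $\wal_k$ that is evaluated at the node $x$ and $\wal_{k\oplus\lfloor 2^{a+\mu_1(k)-1}\rfloor}$ that is evaluated at $\theta$, with the weight $2^{-\mu_1(k)}$ attached to the $x$-index $k$. Your proposed expression for the $\theta$-Walsh coefficient, $c_k(y)=2^{-1}2^{-\mu_1(k)}\sum_{a\ge 0}(-1)^{1_{a\neq 0}}2^{-a}\wal_{k\oplus\lfloor 2^{a+\mu_1(k)-1}\rfloor}(y)$, swaps this assignment, and a check at $k=0$ already fails: your formula yields $c_0(y)=y$, whereas $\int_0^1 1_{[y,1)}(\theta)\,\rd\theta=1-y$. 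The reindexing you mention cannot repair this, since $1_{[0,\theta)}(x)$ is not symmetric in $x$ and $\theta$. Once you use the correct bivariate expansion (with $\wal_k$ on $x$), there is no reindexing to do at all---the product over $j$ and the sum over $n$ give the lemma's formula verbatim.
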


\begin{proof}
It is well known that (cf. \cite{Fine} or \cite[Lemma~A.22]{DP10})
\begin{equation*}
\theta = \sum_{a=0}^\infty (-1)^{1_{a \neq 0}} 2^{-a-1} \wal_{\lfloor 2^{a-1} \rfloor }(\theta).
\end{equation*}
The Walsh series expansion of the indicator function $1_{[0,t)}(x)$ can be obtained from \cite[Section~3]{Fine} (or \cite[Lemma~2]{lp} and \cite[Lemma~3]{lp}, or see also \cite[Lemma~14.8]{DP10}) and is given by
\begin{align*}
1_{[0,\theta)}(x)  \sim &  \sum_{a=0}^\infty  \sum_{k=0}^\infty  (-1)^{1_{a\neq 0}} 2^{-a-1} 2^{-\mu_1(k)} \wal_k(x)  \wal_{k \oplus \lfloor 2^{a+ \mu_1(k)-1} \rfloor}(\theta).
\end{align*}
By substituting these formulae into
\begin{align*}
\delta(\cP_{2^m,s}; \bstheta) = & \frac{1}{N} \sum_{n=0}^{N-1} \left( \prod_{j=1}^s 1_{[0, \theta_j)}(x_{n,j}) - \prod_{j=1}^s \theta_j \right)
\end{align*}
we obtain the result.
\end{proof}

In the following section we show how Definition~\ref{def_net} and Lemma~\ref{lem1} can be combined to obtain Theorem~\ref{thm2}.

\subsection{The proof of Theorem~\ref{thm2}}\label{subsec_proof}

Throughout this subsection we assume that $q$ is an even integer with $2 \le q < \infty$. Let $\cP_{N,s} = \{\bsx_0, \bsx_1, \ldots, \bsx_{N-1}\}$ denote the first $N$ points of an order $2$ digital $(t,s)$-sequence. We split the proof of Theorem~\ref{thm2} into several lemmas.
\begin{lemma}\label{lem_1}
For $N, s \in \mathbb{N}$, with $N \ge 2$, we have
\begin{align*}
\mathcal{L}^2_q(\cP_{N,s}) \ll_{q,s} \sum_{\bsb \in \mathbb{N}_0^s} \left(\int_{[0,1]^s} |\sigma_{\bsb} (\bstheta)|^q \,\mathrm{d} \bstheta \right)^{2/q},
\end{align*}
where
\begin{align*}
\sigma_{\bsb}(\bstheta) = \sum_{\bsell \in B(\bsb)} c(\bsell) \wal_{\bsell}(\bstheta),
\end{align*}
and where
\begin{align*}
c(\bsell) = & \sum_{u \in A(\bsell)} (-1)^{s-|u|} \sum_{\bsz_{u} \in \mathbb{N}^{|u|} } 2^{-\mu_1(\bsell) - |\bsz_{u}|_1 - s} \frac{1}{N} \sum_{n=0}^{N-1} \wal_{\bsell \oplus \lfloor 2^{(\bsz_{u}, \bszero) + \nu(\bsell) - \bsone}\rfloor}(\bsx_n),
\end{align*}
where the set $A(\bsell)$ is the power set $\mathscr{P}(\{1,\ldots, s\})$ of $\{1,\ldots, s\}$, unless $\bsell = (\ell_1,\ldots, \ell_s)$ is of the form $\ell_j = 0$ or $\ell_j = 2^{\mu_1(\ell_j)-1}$ for all $1 \le j \le s$, in which case the term $u = \emptyset$ is omitted, i.e.
\begin{equation*}
A(\bsell) = \left\{\begin{array}{rl} \mathscr{P}(\{1,\ldots, s\}) \setminus \{\emptyset\} & \mbox{if } \ell_j = 0 \mbox{ or } \ell_j = 2^{\mu_1(\ell_j)-1} \mbox{ for } 1 \le j \le s, \\ \mathscr{P}(\{1,\ldots, s\}) & \mbox{otherwise}. \end{array} \right.
\end{equation*}
\end{lemma}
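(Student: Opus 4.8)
The plan is to feed the local discrepancy function $f=\delta(\cP_{N,s};\cdot)$, which lies in every $\mathcal{L}_q([0,1]^s)$ because $|f|\le 1$, into Skriganov's Littlewood--Paley-type inequality (Proposition~\ref{prop_skr}). Since $\mathcal{L}_q(\cP_{N,s})=\bigl(\int_{[0,1]^s}|\delta(\cP_{N,s};\bstheta)|^q\rd\bstheta\bigr)^{1/q}$, that proposition gives
$$
\mathcal{L}_q(\cP_{N,s})\ll_{q,s}\left(\sum_{\bsb\in\mathbb{N}_0^s}\left(\int_{[0,1]^s}|\sigma_\bsb f(\bstheta)|^q\rd\bstheta\right)^{2/q}\right)^{1/2},
$$
and squaring yields the asserted bound provided we identify the function $\sigma_\bsb f$ of Proposition~\ref{prop_skr} with the function $\sigma_\bsb$ of the statement. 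By the definition $\sigma_\bsb f(\bstheta)=\sum_{\bsell\in B(\bsb)}\widehat f(\bsell)\wal_\bsell(\bstheta)$, this reduces entirely to computing the Walsh coefficients of the local discrepancy function and checking that
$$
\widehat f(\bsell)=\int_{[0,1]^s}\delta(\cP_{N,s};\bstheta)\wal_\bsell(\bstheta)\rd\bstheta=c(\bsell)\qquad\text{for every }\bsell\in\mathbb{N}_0^s.
$$

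To compute $\widehat f(\bsell)$ I would use the Walsh series expansion of $\delta(\cP_{N,s};\cdot)$ from Lemma~\ref{lem1} and read off the coefficient of $\wal_\bsell(\bstheta)$. Writing $\varphi(\bsk,\bsa)=\bsk\oplus\lfloor 2^{\bsa+\nu(\bsk)-\bsone}\rfloor$, this is
$$
\widehat f(\bsell)=\frac{1}{2^sN}\sum_{n=0}^{N-1}\ \sum_{\substack{(\bsk,\bsa)\,:\,\bsk\neq\bszero\\ \varphi(\bsk,\bsa)=\bsell}}(-1)^{|1_{\bsa\neq\bszero}|_1}\,2^{-\mu_1(\bsk)-|\bsa|_1}\,\wal_\bsk(\bsx_n);
$$
for fixed $\bsell$ the weights $2^{-\mu_1(\bsk)-|\bsa|_1}$ over the solution set sum to a finite geometric value, so the rearrangement is legitimate. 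Everything then reduces to describing, one coordinate at a time, the solutions of $k\oplus\lfloor 2^{a+\mu_1(k)-1}\rfloor=\ell$: distinguishing $a=0$ from $a\ge1$ and $k=0$ from $k>0$, one finds that the solutions with $a=0,\ k>0$ form a one-parameter family $k=\ell\oplus\lfloor 2^{\mu_1(\ell)+z-1}\rfloor$ (with $\mu_1(k)=\mu_1(\ell)+z$) indexed by $z\ge1$, while in all remaining cases there is exactly one solution, $k=\ell\oplus\lfloor 2^{\mu_1(\ell)-1}\rfloor$ with $a=\mu_1(\ell)-\mu_1(k)$ forced. Letting $u\subseteq\{1,\dots,s\}$ be the set of coordinates on which the one-parameter family is chosen turns the coordinate-wise product into the sum over $u$: each $j\in u$ contributes a geometric sum over $z_j\ge1$, each $j\notin u$ contributes its single term, the powers of $2$ collect into $2^{-\mu_1(\bsell)-|\bsz_u|_1-s}$, the point-arguments collect into $\wal_{\bsell\oplus\lfloor 2^{(\bsz_u,\bszero)+\nu(\bsell)-\bsone}\rfloor}(\bsx_n)$, and the signs $(-1)^{1_{a_j\neq0}}$ collect into $(-1)^{s-|u|}$ once the degenerate coordinates are reconciled. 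Finally, the one configuration excluded in Lemma~\ref{lem1}, namely $\bsk=\bszero$, corresponds to $u=\emptyset$ with every single-coordinate solution equal to $0$; this occurs precisely when each $\ell_j$ is $0$ or equals $2^{\mu_1(\ell_j)-1}$, and removing it replaces $\mathscr{P}(\{1,\dots,s\})$ by $A(\bsell)$.

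The main obstacle is exactly this coordinate-wise inversion together with the bookkeeping it entails. One has to handle the floor functions $\lfloor 2^{a+\mu_1(k)-1}\rfloor$ carefully, in particular the degenerate cases $\ell_j=0$ (where $\lfloor 2^{-1}\rfloor=0$ and $\wal_{k_j}\equiv1$) and $\ell_j$ a power of two (where the single solution again has $k_j=0$); one must verify that the geometric series over each $z_j$ together with the sign pattern $(-1)^{|1_{\bsa\neq\bszero}|_1}$ really do reassemble into the closed form in the statement; and one must check that the single deleted term is precisely what the modification from $\mathscr{P}(\{1,\dots,s\})$ to $A(\bsell)$ accounts for. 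None of this is conceptually hard, but it is where all the constants and signs have to be pinned down.
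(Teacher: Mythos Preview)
Your proposal is correct and follows essentially the same route as the paper. The paper also starts from the Walsh expansion of Lemma~\ref{lem1}, performs the coordinate-wise change of variables $\ell_j=k_j\oplus\lfloor 2^{a_j+\mu_1(k_j)-1}\rfloor$ (equivalently $k_j=\ell_j\oplus\lfloor 2^{z_j+\mu_1(\ell_j)-1}\rfloor$), passes from the index set $v=\{j:a_j>0\}$ to its complement $u$, records the exclusion $\bsk\neq\bszero$ as the removal of $u=\emptyset$ in the degenerate case, and then applies Proposition~\ref{prop_skr}; the only cosmetic difference is that the paper rearranges the series first and invokes Proposition~\ref{prop_skr} at the end, whereas you invoke it at the start and then identify the Walsh coefficients.
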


\begin{proof}
Using Lemma~\ref{lem1} we obtain the following expression for the Walsh series of the local discrepancy function $\delta(\cP_{N,s}; \bstheta)$
\begin{align}\label{big_sum}
& \frac{1}{2^sN} \sum_{n=0}^{N-1} \sum_{\bsk\in \mathbb{N}_0^s \setminus \{\bszero\} }  2^{-\mu_1(\bsk)} \wal_{\bsk}(\bsx_n)\sum_{v \subseteq \{1,\ldots, s\}} (-1)^{|v|} \sum_{\bsa_v \in \mathbb{N}^{|v|}}  2^{-|\bsa_v|_1} \wal_{\bsk \oplus \lfloor 2^{(\bsa_v,\bszero) + \nu(\bsk)-\bsone} \rfloor }(\bstheta).
\end{align}

The sum \eqref{big_sum} can be rearranged using the substitution $\ell_j = k_j \oplus 2^{a_j + \mu_1(k_j)-1}$. Thus we can write $k_j = \ell_j \oplus 2^{z_j + \mu_1(\ell_j)-1}$. In this case we have 
\begin{align*}
\mu_1(\ell_j) = \mu_1(k_j) + a_j & \mbox{ if } a_j > 0, \\ 
\mu_1(k_j) = \mu_1(\ell_j) + z_j & \mbox{ if } z_j > 0.
\end{align*}
The two cases are complementary, that is, if $a_j > 0$, then $\mu_1(\ell_j) > \mu_1(k_j)$ and then $z_j  = 0$. By symmetry we also have that if $z_j > 0$, then $a_j = 0$.

In \eqref{big_sum} we sum over all $v \subseteq \{1,\ldots, s\}$ and then sum over the vector $\bsa_v$ which consists of positive integers. By the substitution above we replace $k_j$ by $\ell_j$ and $a_j$ by $z_j$, where $z_j = 0$ for $a_j > 0$ and $z_j > 0$ for $a_j = 0$. Thus we need to replace the sum over $v \subseteq \{1,\ldots, s\}$ by a sum over $u := \{1,\ldots, s\} \setminus v$. Hence we obtain
\begin{align}\label{big_sum2}
& \frac{1}{2^sN} \sum_{n=0}^{N-1} \sum_{\bsk\in \mathbb{N}_0^s \setminus \{\bszero\} }  2^{-\mu_1(\bsk)} \wal_{\bsk}(\bsx_n) \\ & \sum_{v \subseteq \{1,\ldots, s\}} (-1)^{|v|} \sum_{\bsa_v \in \mathbb{N}^{|v|}}  2^{-|\bsa_v|_1} \wal_{\bsk \oplus \lfloor 2^{(\bsa_v,\bszero) + \nu(\bsk)-\bsone} \rfloor }(\bstheta) \nonumber \\ = & \frac{1}{2^sN} \sum_{n=0}^{N-1} \sum_{u \subseteq \{1,\ldots, s\}} (-1)^{s-|u|} \sum_{\bsz_u \in \mathbb{N}^{|u|}} 2^{-|\bsz_u|_1} \nonumber \\ &  \sum_{ \satop{ \bsell \in \mathbb{N}_0^s }{\bsell \oplus \lfloor 2^{\bsz_u + \mu_1(\bsell)- \bsone } \rfloor \neq \bszero  } } 2^{-\mu_1(\bsell)}  \wal_{\bsell \oplus \lfloor 2^{\bsz_u + \mu_1(\bsell)- \bsone } \rfloor  }(\bsx_n)  \wal_{\bsell}(\bstheta).
\end{align}

Note that for $k_j = 2^{\mu_1(k_j)-1}$ and $a_j = 0$ we have $\ell_j = 0$. Thus the case  $\bsell = \bszero$ needs to be included. On the other hand, we must have $\bsell \oplus \lfloor 2^{(\bsz_u,\bszero) + \nu(\bsell) - \bsone} \rfloor \neq \bszero$. We have $\ell_j \oplus \lfloor 2^{z_j+\mu_1(\ell_j)-1} \rfloor = 0$ if either $\ell_j = 0$ and $z_j=0$ or $\ell_j = 2^{\mu_1(\ell_j)-1}$ and $z_j=0$. We can exclude this case by assuming that if all $\ell_j$ are of the form $\ell_j = 0$ or $\ell_j = 2^{\mu_1(\ell_j)-1}$ then $u \neq \emptyset$, since then at least one $z_j \neq 0$. This is ensured by the condition on the sum over $u$ in the statement of the lemma. Thus we obtain that \eqref{big_sum2} equals
\begin{align*}
\sum_{\bsell \in \mathbb{N}_0^s} c(\bsell) \wal_{\bsell}(\bstheta)  = & \sum_{\bsb \in \mathbb{N}_0^s} \sum_{\bsell \in B(\bsb)} c(\bsell) \wal_{\bsell}(\bstheta)
\end{align*}
and we have
\begin{equation}\label{eq_c}
\delta(\cP_{N,s}; \bstheta) \sim \sum_{\bsb \in \mathbb{N}_0^s} \sigma_{\bsb}(\bstheta).
\end{equation}

Using Proposition~\ref{prop_skr} applied to the local discrepancy function $\delta(\cP_{N,s}; \cdot)$ we obtain the result.
\end{proof}

Using the orthogonality of the Walsh functions \eqref{walsh_orthogonal} and the assumption that $q \ge 2$ is an even integer, the integral over $|\sigma_{\bsb}|^q$ can be written in terms of the coefficients $c(\bsell_i)$ in the following way:
\begin{align}\label{bound_sumb}
\int_{[0,1]^s} |\sigma_{\bsb}(\bstheta) |^{q} \,\mathrm{d} \bstheta = & \sum_{\bsell_1, \ldots, \bsell_{q} \in B(\bsb)} \prod_{i=1}^{q} c(\bsell_i) \int_{[0,1]^s}  \prod_{i=1}^{q} \wal_{\bsell_i}(\bstheta)  \,\mathrm{d} \bstheta \nonumber \\ = & \sum_{\bsell_1,\ldots, \bsell_{q-1} \in B(\bsb)} c(\bsell_1 \oplus \cdots \oplus \bsell_{q-1}) \prod_{i=1}^{q-1} c(\bsell_i).
\end{align}
We now obtain a bound on $\int_{[0,1]^s} |\sigma_{\bsb}(\bstheta) |^{q} \,\mathrm{d} \bstheta$ by bounding the coefficients $c(\bsell_i)$ and $c(\bsell_1 \oplus \cdots \oplus \bsell_{q-1})$.

\begin{lemma}
Let $N \in \mathbb{N}$ with $N \ge 2$ and $N = 2^{m_1} + 2^{m_2} + \cdots + 2^{m_r}$ with $m_1 > m_2 > \cdots > m_r \ge 0$ and $\bsell \in B(\bsb)$. Then we have
\begin{align}\label{cl_est}
|c(\bsell)| \ll_s &  \sum_{h=1}^r \frac{2^{m_h}}{N} \sum_{\satop{ \bsz \in \mathbb{N}_0^{s}}{ \bsell \oplus \lfloor 2^{\bsz + \nu(\bsell) - \bsone } \rfloor  \in \mathcal{D}_{m_h,s}^\ast}} 2^{- |\bsb|_1 - |\bsz|_1 }.
\end{align}
\end{lemma}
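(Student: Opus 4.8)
The plan is to start from the closed form for $c(\bsell)$ given in Lemma~\ref{lem_1}, namely
\[
c(\bsell) = \sum_{u \in A(\bsell)} (-1)^{s-|u|} \sum_{\bsz_{u} \in \mathbb{N}^{|u|} } 2^{-\mu_1(\bsell) - |\bsz_{u}|_1 - s} \frac{1}{N} \sum_{n=0}^{N-1} \wal_{\bsell \oplus \lfloor 2^{(\bsz_{u}, \bszero) + \nu(\bsell) - \bsone}\rfloor}(\bsx_n),
\]
and to estimate the inner character sum $\frac{1}{N}\sum_{n=0}^{N-1}\wal_{\bsk}(\bsx_n)$, where $\bsk = \bsell \oplus \lfloor 2^{(\bsz_u,\bszero)+\nu(\bsell)-\bsone}\rfloor$. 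The first step is to use the dyadic decomposition $N = 2^{m_1}+\cdots+2^{m_r}$ to split the point set $\cP_{N,s}$ into $r$ blocks of consecutive points: the $h$th block consists of the $2^{m_h}$ points $\bsx_n$ with index $n$ in a suitable interval of the form $[\beta_h 2^{m_h}, \beta_h 2^{m_h} + 2^{m_h})$. By Lemma~\ref{lem_net_seq} each such block is a digitally shifted digital net with generating matrices $C_j^{2m_h \times m_h}$ and a digital shift vector $\bssigma_{\beta_h}$ whose coordinates are dyadic rationals.

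The second step is to evaluate the character sum over one shifted net. The digital shift factors out as a Walsh character: $\wal_{\bsk}(\bsx_n \oplus \bssigma_{\beta_h}) = \wal_{\bsk}(\bsx_n)\wal_{\bsk}(\bssigma_{\beta_h})$, so by the character property \eqref{char_prop} the normalized sum over the $h$th block has absolute value $1$ if $\bsk \in \mathcal{D}_{m_h,s} = \mathcal{D}(C_1^{2m_h\times m_h},\ldots,C_s^{2m_h\times m_h})$ and $0$ otherwise. Combining the $r$ blocks with the triangle inequality gives
\[
\left|\frac{1}{N}\sum_{n=0}^{N-1}\wal_{\bsk}(\bsx_n)\right| \le \sum_{h=1}^r \frac{2^{m_h}}{N}\,\mathbf{1}_{\bsk \in \mathcal{D}_{m_h,s}}.
\]
One must be slightly careful that, since $\bsell \ne \bszero$ implies the relevant $\bsk$ is nonzero only when it lies in $\mathcal{D}_{m_h,s}^\ast$; this is exactly why the starred set appears in \eqref{cl_est}. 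Plugging this into the formula for $c(\bsell)$, using $2^{-\mu_1(\bsell)} = 2^{-|\bsb|_1}$ for $\bsell \in B(\bsb)$ (since $\mu_1(\bsell) = \sum_j \mu_1(\ell_j) = \sum_j b_j = |\bsb|_1$), and bounding $|A(\bsell)| \le 2^s$, gives a bound of the asserted shape with the sum over $\bsz_u \in \mathbb{N}^{|u|}$.

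The third step is the bookkeeping to pass from the sum over subsets $u$ and vectors $\bsz_u \in \mathbb{N}^{|u|}$ to a single sum over $\bsz \in \mathbb{N}_0^s$. For each term indexed by $(u,\bsz_u)$ one sets $\bsz \in \mathbb{N}_0^s$ to be the vector with $z_j$ in coordinates $j \in u$ and $0$ in coordinates $j \notin u$; then $(\bsz_u,\bszero) = \bsz$, $|\bsz_u|_1 = |\bsz|_1$, and $\lfloor 2^{(\bsz_u,\bszero)+\nu(\bsell)-\bsone}\rfloor = \lfloor 2^{\bsz+\nu(\bsell)-\bsone}\rfloor$, so the membership condition $\bsell \oplus \lfloor 2^{\bsz+\nu(\bsell)-\bsone}\rfloor \in \mathcal{D}_{m_h,s}^\ast$ is precisely what appears in \eqref{cl_est}. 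Each $\bsz \in \mathbb{N}_0^s$ with support contained in $\{1,\dots,s\}$ arises from at most one $(u,\bsz_u)$ with $u \supseteq \operatorname{supp}(\bsz)$ — or at worst boundedly many if one is sloppy about which $u$ one chooses — so enlarging the index set to all $\bsz \in \mathbb{N}_0^s$ only loses a constant depending on $s$, which is absorbed into $\ll_s$. Absorbing the factor $2^{-s}$ and $|A(\bsell)| \le 2^s$ into the implied constant yields \eqref{cl_est}.

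I expect the main obstacle to be the first two steps done together cleanly: namely organizing the dyadic block decomposition of $\{\bsx_0,\dots,\bsx_{N-1}\}$ so that each block is exactly a coset $[\beta_h 2^{m_h},(\beta_h+1)2^{m_h})$ to which Lemma~\ref{lem_net_seq} applies, and then correctly tracking the digital shift through the Walsh character. The shift contributes only a unimodular scalar $\wal_{\bsk}(\bssigma_{\beta_h})$, so it drops out under absolute values, but one has to make sure the generating matrices of every block are the \emph{same} truncations $C_j^{2m_h\times m_h}$ (hence the same $\mathcal{D}_{m_h,s}$), which is exactly the content of Lemma~\ref{lem_net_seq}. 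The remaining combinatorial reindexing (step three) is routine once one is careful that no $\bsz$ is counted with unbounded multiplicity.
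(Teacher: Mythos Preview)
Your proposal is correct and follows essentially the same route as the paper: block decomposition of $\{0,\ldots,N-1\}$ into intervals of length $2^{m_h}$, Lemma~\ref{lem_net_seq} to recognize each block as a digitally shifted net, the character property \eqref{char_prop} with the shift factored out as a unimodular Walsh value, then triangle inequality and the reindexing $(u,\bsz_u)\leftrightarrow\bsz\in\mathbb{N}_0^s$. One small clean-up: the map $(u,\bsz_u)\mapsto\bsz$ with $u=\operatorname{supp}(\bsz)$ is an exact bijection between $\bigcup_u\{u\}\times\mathbb{N}^{|u|}$ and $\mathbb{N}_0^s$ (since $\bsz_u\in\mathbb{N}^{|u|}$ forces all entries to be strictly positive), so no multiplicity hedge or separate $|A(\bsell)|\le 2^s$ bound is needed---the paper writes this step as an equality.
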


\begin{proof}
In the following we rewrite the expression for $c(\bsell)$ from Lemma~\ref{lem_1} which will make it easier to obtain a suitable bound. To do so, we need the following notation. Let $C_1, \ldots, C_s \in \mathbb{F}_2^{\mathbb{N} \times \mathbb{N}}$ denote the generating matrices of $\cS_s$. Let $C_{j}^{(2m_h \times m_h)}$ denote the left upper submatrix of $C_j$ of size $2m_h \times m_h$. We divide the point set $\{\bsx_0, \bsx_1, \ldots, \bsx_{N-1}\}$ into blocks of size $2^{m_h}$ in the following way: Let
\begin{equation*}
Q_h = \{\bsx_{2^{m_1}+\cdots + 2^{m_{h-1}}}, \bsx_{2^{m_1}+\cdots + 2^{m_{h-1}} + 1}, \ldots, \bsx_{2^{m_1} + \cdots + 2^{m_{h}}-1}\},
\end{equation*}
for $1 \le h \le r$, where for $h=1$ we set $2^{m_1} + \cdots + 2^{m_{h-1}} = 0$.

The main reason for dividing the point set in this way is that $Q_h$ is a digitally shifted digital net over $\mathbb{F}_2$ with generating matrices $C_1^{2m_h \times m_h}, \ldots, C_s^{2m_h \times m_h}$, where the digital shift is done by dyadic rationals, see Lemma~\ref{lem_net_seq}. Assume that the digital shift is given by $\sigma_h$. We have $Q_h \oplus \bssigma_h =  \{\bsx \oplus \bssigma_h: \bsx \in Q_h\}$ is a digital net with generating matrices $C_1^{(2m_h \times m_h)}, \ldots, C_s^{(2m_h \times m_h)}$ (note that for $\mathbb{F}_2$ the symbol $\oplus$ coincides with $\ominus$). Let $\mathcal{D}_{m_h,s}$ denote the dual net corresponding to $Q_h$, that is, $\mathcal{D}_{m_h,s} = \mathcal{D}(C_1^{(2m_h \times m_h)}, \ldots, C_s^{(2m_h \times m_h)})$. Further $$\wal_{\bsell \oplus \lfloor 2^{(\bsz_{u}, \bszero) + \nu(\bsell) - \bsone}\rfloor}(\bsx_n) = \wal_{\bsell \oplus \lfloor 2^{(\bsz_{u}, \bszero) + \nu(\bsell) - \bsone}\rfloor}(\bsx'_n) \wal_{\bsell \oplus \lfloor 2^{(\bsz_{u}, \bszero) + \nu(\
bsell) - \bsone}\rfloor}(\bssigma_h),$$ where $\bsx_n = \bsx'_n \oplus \bssigma_
h$ (note that all components of $\bsx_n, \bssigma, \bsx'_n$ are dyadic rationals). We can use the character property \eqref{char_prop} for the digital net $Q_h \oplus \bssigma_h$ to obtain
\begin{align*}
c(\bsell) = & \sum_{h=1}^r \frac{2^{m_h}}{N} \sum_{u\subseteq \{1,\ldots, s\}}^\ast (-1)^{s-|u|} \hspace{-1cm} \sum_{\satop{ \bsz_{u} \in \mathbb{N}^{|u|}}{ \bsell \oplus \lfloor 2^{(\bsz_u, \bszero) + \nu(\bsell) - \bsone } \rfloor  \in \mathcal{D}_{m_h,s}^\ast}} \hspace{-1cm} 2^{-\mu_1(\bsell) - |\bsz_{u}|_1 - s} \wal_{\bsell \oplus \lfloor 2^{(\bsz_{u}, \bszero) + \nu(\bsell) - \bsone} \rfloor}(\bssigma_h).
\end{align*}

We now estimate $|c(\bsell)|$ for $\bsell \in B(\bsb)$. Using the facts that $|(-1)^{s-|u|}|=1$, $|\wal_{\bsell \oplus \lfloor 2^{(\bsz_{u}, \bszero) + \nu(\bsell) - \bsone} \rfloor}(\bssigma_h)| = 1$ and the triangle inequality we deduce
\begin{align*}
|c(\bsell)| \ll_s &  \sum_{h=1}^r \frac{2^{m_h}}{N} \sum_{u \subseteq \{1,\ldots, s\}}  \sum_{\satop{ \bsz_u \in \mathbb{N}^{|u|}}{ \bsell \oplus \lfloor 2^{(\bsz_u, \bszero) + \nu(\bsell) - \bsone } \rfloor  \in \mathcal{D}_{m_h,s}^\ast}} 2^{-\mu_1(\bsell) - |\bsz_u|_1 } \\ = & \sum_{h=1}^r \frac{2^{m_h}}{N} \sum_{\satop{ \bsz \in \mathbb{N}_0^{s}}{ \bsell \oplus \lfloor 2^{\bsz + \nu(\bsell) - \bsone } \rfloor  \in \mathcal{D}_{m_h,s}^\ast}} 2^{-\mu_1(\bsell) - |\bsz|_1 }.
\end{align*}
Since $\mu_1(\bsell) = |\bsb|_1$ we obtain the result.
\end{proof}

The following lemma proves an effective bound on $|c(\bsell)|$. In the proof of this result we make essential use of the order $2$ digital net property of our point set.

\begin{lemma}\label{lem_c_bound}
Let $\bsell \in B(\bsb)$ and let $N \in \mathbb{N}$ have dyadic expansion $N = 2^{m_1} + 2^{m_2} + \cdots + 2^{m_r}$ with $m_1 > m_2 > \cdots > m_r \ge 0$.  Then we have
\begin{equation*}
|c(\bsell)| \ll_s \frac{1}{N} \sum_{h=1}^r 2^{m_h - |\bsb|_1 -2(m_h-|\bsb|_1)_+} {2(m_h-|\bsb|_1)_+ + s-1 \choose s-1},
\end{equation*}
where $(v)_+ = \max\{0, v\}$.
\end{lemma}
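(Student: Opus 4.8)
The plan is to start from the bound \eqref{cl_est} on $|c(\bsell)|$ and estimate, for each dyadic block $h$ with $1 \le h \le r$, the inner sum
\[
T_h := \sum_{\satop{\bsz \in \mathbb{N}_0^s}{\bsell \oplus \lfloor 2^{\bsz + \nu(\bsell) - \bsone}\rfloor \in \mathcal{D}_{m_h,s}^\ast}} 2^{-|\bsb|_1 - |\bsz|_1}.
\]
The key observation is that $\mathcal{D}_{m_h,s}$ is the dual net of an order $2$ digital $(t,m_h,s)$-net (for $2m_h > t$), so by \eqref{2min_weight} every nonzero $\bsk \in \mathcal{D}_{m_h,s}^\ast$ satisfies $\mu_2(\bsk) > 2m_h - t$. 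I would apply this to $\bsk = \bsell \oplus \lfloor 2^{\bsz+\nu(\bsell)-\bsone}\rfloor$. Since $\ell_j$ has leading bit $2^{\mu_1(\ell_j)-1} = 2^{b_j-1}$ and $\lfloor 2^{z_j + \mu_1(\ell_j)-1}\rfloor$ (for $z_j > 0$) sets the bit in position $z_j + b_j - 1 \ge b_j$, the $j$th component $k_j = \ell_j \oplus \lfloor 2^{z_j+b_j-1}\rfloor$ has, for $z_j > 0$, leading bit in position $z_j + b_j - 1$ and second bit in position $\le b_j - 1$; hence $\mu_2(k_j) \ge z_j + b_j$ when $z_j > 0$ (with the convention from \eqref{def_mu2}), while $\mu_2(k_j) = \mu_1(k_j) \le b_j$ when $z_j = 0$ and $\ell_j$ is a pure power of two, and $\mu_2(k_j) \le b_j + (b_j-1) \le 2b_j$ in general when $z_j = 0$. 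Summing over $j$ this gives roughly $\mu_2(\bsk) \le |\bsb|_1 + |\bsz|_1 + (\text{something} \le |\bsb|_1)$, so the constraint $\mu_2(\bsk) > 2m_h - t$ forces $|\bsz|_1 \gtrsim 2m_h - t - 2|\bsb|_1$, i.e. $|\bsz|_1 \ge 2(m_h - |\bsb|_1)_+ - O_s(1)$.

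Given this lower bound on $|\bsz|_1$, I would bound $T_h$ by $2^{-|\bsb|_1}$ times the sum of $2^{-|\bsz|_1}$ over all $\bsz \in \mathbb{N}_0^s$ with $|\bsz|_1 \ge 2(m_h-|\bsb|_1)_+ =: L_h$ (absorbing the $O_s(1)$ slack into the implied constant). A standard computation gives
\[
\sum_{\satop{\bsz \in \mathbb{N}_0^s}{|\bsz|_1 \ge L}} 2^{-|\bsz|_1} = \sum_{w \ge L} 2^{-w} \binom{w+s-1}{s-1} \ll_s 2^{-L} \binom{L+s-1}{s-1},
\]
since the summand decays geometrically once $w$ exceeds $L$ by a fixed amount (the binomial grows only polynomially). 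This yields $T_h \ll_s 2^{-|\bsb|_1 - 2(m_h-|\bsb|_1)_+}\binom{2(m_h-|\bsb|_1)_+ + s-1}{s-1}$, and multiplying by $2^{m_h}/N$ and summing over $h$ gives exactly the claimed bound. One technical point to handle carefully: when $2m_h \le t$ the net property does not apply, but then $m_h = O_s(1)$ and $(m_h - |\bsb|_1)_+ = O_s(1)$, so such blocks contribute $\ll_s \frac{1}{N}2^{m_h}$ times a bounded factor, consistent with the stated bound; I would simply note this case separately or fold it into the constant.

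The main obstacle will be the bookkeeping around the weight function $\mu_2$ and the interaction between $\bsell$ and the shift $\lfloor 2^{\bsz+\nu(\bsell)-\bsone}\rfloor$: I need to verify carefully, component by component, that the two leading bits of $k_j = \ell_j \oplus \lfloor 2^{z_j + b_j - 1}\rfloor$ are located where claimed, distinguishing the cases $z_j = 0$ versus $z_j > 0$ and whether $\ell_j$ is or is not a pure power of two (the latter being precisely the situation governing the set $A(\bsell)$ in Lemma~\ref{lem_1}). Once that case analysis is pinned down, the estimate $\mu_2(\bsk) \le 2|\bsb|_1 + |\bsz|_1$ (hence $|\bsz|_1 > 2m_h - t - 2|\bsb|_1$) follows, and the remainder is the elementary geometric-sum estimate above. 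Everything else — the triangle inequality, $|\wal|=1$, and the reduction to blocks $Q_h$ — has already been carried out in the preceding lemmas.
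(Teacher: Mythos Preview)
Your approach is correct and matches the paper's proof: derive $\mu_2(\bsell \oplus \lfloor 2^{\bsz+\nu(\bsell)-\bsone}\rfloor) \le 2|\bsb|_1 + |\bsz|_1$, combine with \eqref{2min_weight} to force $|\bsz|_1 \ge 2m_h - t + 1 - 2|\bsb|_1$, and then bound the tail sum $\sum_{|\bsz|_1 \ge L} 2^{-|\bsz|_1}$ via \eqref{ineq_el}. Two small cleanups: for $z_j > 0$ what you actually need (and what your next sentence uses) is the \emph{upper} bound $\mu_2(k_j) = (z_j+b_j) + b_j = z_j + 2b_j$, not the lower bound $\mu_2(k_j) \ge z_j + b_j$ you stated; and the case $2m_h \le t$ needs no separate treatment, since then $2m_h - t + 1 - 2|\bsb|_1 \le 0$ and the trivial estimate $\sum_{\bsz \in \mathbb{N}_0^s} 2^{-|\bsz|_1} = 2^s$ already matches the claimed bound.
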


\begin{proof}
Let $\bsell = (\ell_{1},\ldots, \ell_{s})$ and $\ell_{j} = 2^{w_{j,1}-1} + 2^{w_{j,2}-1} + \cdots + 2^{w_{j,r_{j}}-1}$, where $w_{j,1} > w_{j,2} > \cdots > w_{j,r_{j}} > 0$ for $\ell_j > 0$ and for $\ell_j=0$ we set $w_{j,1}= w_{j,2} =0$ and $r_{j}=0$. Further we set $w_{j, r_j+1}  = w_{j, r_j+2} = w_{j, r_j+3} = 0$. For $\bsz = (z_1, \ldots, z_s) \in \mathbb{N}_0^s$ let $u$ be the set of components $j$ for which $z_j > 0$. Using the definition of $\mu_2$ given by \eqref{def_mu2} and \eqref{def_mu2_vec}, we have
\begin{equation*}
\mu_2(\bsell \oplus \lfloor 2^{\bsz + \nu(\bsell)-\bsone} \rfloor) = \sum_{j\in u} (2 \mu_1(\ell_{j}) + z_{j}) + \sum_{j \in \{1,\ldots, s\} \setminus u} (w_{j,2}+w_{j,3}).
\end{equation*}
By the order $2$ digital net property, in particular \eqref{2min_weight}, and $\bsell \oplus \lfloor 2^{\bsz + \nu(\bsell)-\bsone} \rfloor \in \mathcal{D}_{m_h,s}^\ast$ for $\bsz \in \mathbb{N}_0^{s}$ we have
\begin{align*}
2 m_h -t+1 \le & \mu_2(\bsell \oplus \lfloor 2^{\bsz + \nu(\bsell)-\bsone} \rfloor) \\ = & \sum_{j\in u} (2 \mu_1(\ell_{j}) + z_{j}) + \sum_{j \in \{1, \ldots, s\} \setminus u} (w_{j,2}+w_{j,3}) \\ \le & 2\mu_1(\bsell) + |\bsz|_1.
\end{align*}
Since $\mu_1(\bsell) = |\bsb|_1$, we obtain
\begin{equation}\label{zb_bound2}
|\bsz|_1 \ge 2 m_h -  t + 1 - 2 |\bsb|_1.
\end{equation}
Using \eqref{zb_bound2}, the right-most sum in \eqref{cl_est} can be split into the cases where $2 m_h -  t + 1 - 2 |\bsb|_1 \le 0$ and where $2 m_h -  t + 1 - 2 |\bsb|_1 > 0$. If $2 m_h -  t + 1 - 2 |\bsb|_1 \le 0$ we sum over all $\bsz \in \mathbb{N}^{s}$, which is
\begin{equation*}
\sum_{\bsz \in \mathbb{N}_0^{s}} 2^{- |\bsz|_1} = \left(\sum_{z=0}^\infty 2^{-z} \right)^s = 2^s.
\end{equation*}

In the following we make use of the well-known inequality
\begin{equation}\label{ineq_el}
\sum_{a=a_0}^\infty b^{-a} {a + s-1 \choose s-1} \le b^{-a_0} {a_0 + s-1 \choose s-1} \left(1-\frac{1}{b}\right)^{-s}.
\end{equation}
A proof can for instance be found in \cite[Lemma~2.18]{mat}.

In the second case we have $2m_h-t+1-2|\bsb|_1 > 0$. From \eqref{2min_weight} we obtain that for $\bsz$ satisfying $\bsell \oplus \lfloor 2^{\bsz + \nu(\bsell) - \bsone } \rfloor  \in \mathcal{D}_{m_h,s}^\ast$, we have $|\bsz|_1 \ge 2m_h-t+1-2|\bsb|_1$. Thus
\begin{align*}
\sum_{\satop{ \bsz \in \mathbb{N}_0^{s}}{ \bsell \oplus \lfloor 2^{\bsz + \nu(\bsell) - \bsone } \rfloor  \in \mathcal{D}_{m_h,s}^\ast}} 2^{- |\bsz|_1 } \le & \sum_{\satop{ \bsz \in \mathbb{N}_0^{s}}{|\bsz|_1 \ge 2m_h-t+1-2|\bsb|_1 }} 2^{- |\bsz|_1 } \\ = & \sum_{a= 2m_h-t+1 - 2|\bsb|_1}^\infty 2^{-a} {a + s-1 \choose s-1} \\ \ll_s & 2^{-2m_h + 2|\bsb|_1} {2m_h-2|\bsb|_1 + s-1 \choose s-1}.
\end{align*}
Thus we obtain
\begin{align*}
|c(\bsell)| \ll_s &  \frac{1}{N} \sum_{h=1}^r 2^{m_h - |\bsb|_1} 2^{-2(m_h-|\bsb|_1)_+} {2(m_h-|\bsb|_1)_+ + s-1 \choose s-1},
\end{align*}
where we used that $t$ depends only on the dimension $s$.
\end{proof}

We can also estimate $|c(\bsell_1 \oplus \cdots \oplus \bsell_{q-1})|$. Note that for $\bsell_1,\ldots, \bsell_{q-1} \in B(\bsb)$ and for $q$ even we have $\bsell_1 \oplus \cdots \oplus \bsell_{q-1} \in B(\bsb)$. Therefore, by Lemma~\ref{lem_c_bound} we have
\begin{equation}\label{cl_est2}
|c(\bsell_1 \oplus \cdots \oplus \bsell_{q-1})| \ll_s \frac{1}{N} \sum_{h=1}^r 2^{m_h - |\bsb|_1} 2^{-2(m_h-|\bsb|_1)_+} {2(m_h-|\bsb|_1)_+ + s-1 \choose s-1}.
\end{equation}

We return to the initial aim of bounding \eqref{bound_sumb}. Since the right-hand side of \eqref{cl_est2} depends only on $\bsb$ but is independent of $\bsell_1,\ldots, \bsell_{q-1}$, we obtain a bound on \eqref{bound_sumb}
\begin{align}\label{expr_parenthesis}
& \left|\sum_{\bsell_1,\ldots, \bsell_{q-1} \in B(\bsb)} c(\bsell_1 \oplus \cdots \oplus \bsell_{q-1}) \prod_{i=1}^{q-1} c(\bsell_i) \right| \nonumber \\ \le &  \sum_{\bsell_1,\ldots, \bsell_{q-1} \in B(\bsb)} |c(\bsell_1 \oplus \cdots \oplus \bsell_{q-1})| \prod_{i=1}^{q-1} |c(\bsell_i)| \nonumber \\ \ll_s & \frac{1}{N} \sum_{h=1}^r 2^{m_h - |\bsb|_1} 2^{-2(m_h-|\bsb|_1)_+} {2(m_h-|\bsb|_1)_+ + s-1 \choose s-1} \left(\sum_{\bsell \in B(\bsb)} |c(\bsell)| \right)^{q-1} \nonumber  \\ \ll_s & \frac{1}{N^q} \sum_{h=1}^r 2^{m_h - |\bsb|_1 -2(m_h-|\bsb|_1)_+} {2(m_h-|\bsb|_1)_+ + s-1 \choose s-1} \nonumber \\  & \left(\sum_{h=1}^r 2^{m_h - |\bsb|_1} \sum_{\bsell \in B(\bsb)} \sum_{\satop{ \bsz \in \mathbb{N}_0^{s}}{ \bsell \oplus \lfloor 2^{\bsz + \nu(\bsell) - \bsone } \rfloor  \in \mathcal{D}_{m_h,s}^\ast}} 2^{ - |\bsz|_1 } \right)^{q-1}.
\end{align}

The aim is now to obtain a bound on the expression in parenthesis in \eqref{expr_parenthesis}. We prove an auxiliary result first.

\begin{lemma}\label{lem_aux}
For $|\bsb|_1 \ge m_h$ we have
\begin{equation*}
\sum_{\bsell \in B(\bsb)} \sum_{\satop{ \bsz \in \mathbb{N}_0^{s}}{ \bsell \oplus \lfloor 2^{\bsz + \nu(\bsell) - \bsone } \rfloor  \in \mathcal{D}_{m_h,s}^\ast}} 2^{- |\bsz|_1 } \ll_s 2^{|\bsb|_1-m_h}
\end{equation*}
and for $|\bsb|_1 < m_h$ we have
\begin{equation*}
\sum_{\bsell \in B(\bsb)} \sum_{\satop{ \bsz \in \mathbb{N}_0^{s}}{ \bsell \oplus \lfloor 2^{\bsz + \nu(\bsell) - \bsone } \rfloor  \in \mathcal{D}_{m_h,s}^\ast}} 2^{- |\bsz|_1 } \ll_s 2^{-2m_h + 2|\bsb|_1} {2m_h-2|\bsb|_1 + s \choose s-1}.
\end{equation*}
\end{lemma}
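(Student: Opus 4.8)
The plan is to reorganize the double sum into a weighted count of dual net elements inside dyadic boxes, bound that count uniformly via the order $2$ net property, and then sum a geometric‑type series over $\bsz$; the two displayed bounds then come out of the two regimes $|\bsb|_1 \ge m_h$ and $|\bsb|_1 < m_h$. First I would interchange summation and write the left‑hand side as $\sum_{\bsz \in \mathbb{N}_0^s} 2^{-|\bsz|_1} N(\bsz)$, where $N(\bsz) = \#\{\bsell \in B(\bsb) : \bsell \oplus \lfloor 2^{\bsz + \nu(\bsell) - \bsone}\rfloor \in \mathcal{D}^\ast_{m_h,s}\}$. Since $\bsell \in B(\bsb)$ forces $\nu(\bsell) = \bsb$, for fixed $\bsz$ the map $\bsell \mapsto \bsk := \bsell \oplus \lfloor 2^{\bsz + \bsb - \bsone}\rfloor$ is, coordinate by coordinate, a bijection from $\{\ell_j : \mu_1(\ell_j) = b_j\}$ onto a dyadic box: when $z_j = 0$ it is $\{k_j : \mu_1(k_j) \le b_j-1\}$, and when $z_j \ge 1$ it is the set of $k_j$ whose leading bit sits at position $z_j+b_j$, whose second bit sits at position $b_j$, and whose remaining $(b_j-1)_+$ low‑order bits are free. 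In particular $\bsell$ and $\bsz$ are recoverable from $\bsk$ and $\bsb$, and $N(\bsz)$ is exactly the number of points of $\mathcal{D}^\ast_{m_h,s}$ in this product box.

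The heart of the argument is a uniform bound $N(\bsz) \ll_s 2^{(|\bsb|_1 - m_h)_+}$. The box has $f := \sum_j (b_j-1)_+ \le |\bsb|_1$ free bits, occupying rows $\vec c_{j,1},\dots,\vec c_{j,(b_j-1)_+}$ of the generating matrices, and the condition $\bsk \in \mathcal{D}_{m_h,s}$ is an affine system in those free bits; hence $N(\bsz) \in \{0\} \cup \{2^{f-\rho}\}$, with $\rho$ the $\mathbb{F}_2$‑rank of those rows in $\mathbb{F}_2^{m_h}$. Here Definition~\ref{def_net} is used: picking in coordinate $j$ the lowest $g_j \le (b_j-1)_+$ of these rows, the corresponding index set contributes $2g_j-1$ (for $g_j \ge 2$) to the weight in Definition~\ref{def_net}, so any choice with $\sum_j g_j = \min(f,m_h) - O_{s,t}(1)$ gives linearly independent rows; this yields $\rho \ge \min(f,m_h) - O_{s,t}(1)$ and hence $N(\bsz) \ll_s 2^{f - \min(f,m_h)} \le 2^{(|\bsb|_1 - m_h)_+}$. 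No truncation issues arise, since the generating matrices satisfy $d_{j,k,\ell} = 0$ for $k > 2\ell$, so bits of $k_j$ beyond position $2m_h$ are invisible to $\mathcal{D}_{m_h,s}$.

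It remains to sum over $\bsz$. If $|\bsb|_1 \ge m_h$, then $N(\bsz) \ll_s 2^{|\bsb|_1-m_h}$ uniformly and $\sum_{\bsz} 2^{-|\bsz|_1} = 2^s$, which gives the first bound. If $|\bsb|_1 < m_h$, then $N(\bsz) \ll_s 1$, but also $N(\bsz) = 0$ unless $|\bsz|_1 \ge 2m_h - t + 1 - 2|\bsb|_1$: by \eqref{2min_weight} any $\bsk \in \mathcal{D}^\ast_{m_h,s}$ has $\mu_2(\bsk) > 2m_h - t$, while the computation of $\mu_2(\bsell \oplus \lfloor 2^{\bsz+\nu(\bsell)-\bsone}\rfloor)$ from the proof of Lemma~\ref{lem_c_bound} gives $\mu_2(\bsk) \le 2|\bsb|_1 + |\bsz|_1$. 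Feeding this into $\sum_{|\bsz|_1 \ge 2m_h-t+1-2|\bsb|_1} 2^{-|\bsz|_1}$ and applying \eqref{ineq_el} with $b = 2$ produces $\ll_s 2^{-2m_h+2|\bsb|_1}{2m_h-2|\bsb|_1+s \choose s-1}$ after absorbing the $t$‑dependent constants; in the degenerate range $2m_h-t+1-2|\bsb|_1 \le 0$ the constraint is vacuous, but then the claimed right‑hand side is already $\gg_s 1$, so the crude bound $\sum_{\bsz}2^{-|\bsz|_1} = 2^s$ suffices.

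The main obstacle is precisely the rank estimate in the second paragraph: converting the combinatorial linear‑independence condition of Definition~\ref{def_net} into a bound $\rho \ge \min(f,m_h) - O_{s,t}(1)$ for the exact low rows selected by the box, and checking that the various $O_{s,t}(1)$ losses (and the bit‑truncation at level $2m_h$) are genuinely harmless. Everything else is bookkeeping: the coordinate‑wise bijection in paragraph one, and the elementary summation \eqref{ineq_el} in paragraph three.
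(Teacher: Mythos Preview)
Your proof is essentially the same as the paper's: interchange the sums, bound the count $N(\bsz)$ via a rank argument on the rows of the generating matrices, restrict the $\bsz$-sum via the order $2$ minimum weight bound \eqref{2min_weight} when $|\bsb|_1 < m_h$, and finish with the geometric series \eqref{ineq_el}.

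The only substantive difference is in the rank estimate for $N(\bsz)$. You extract $\rho \ge \min(f,m_h) - O_{s,t}(1)$ directly from Definition~\ref{def_net} by taking $g_j$ consecutive low rows in coordinate $j$ and observing that the associated weight in that definition is $2g_j-1$; this works, but (as you note) one must still choose the $g_j$ subject to $g_j \le (b_j-1)_+$ and $\sum_j(2g_j - 1_{g_j\ge 1}) \le 2m_h - t$, which takes a small amount of care. The paper instead invokes the order~$1$ minimum weight bound \eqref{min_weight}, already recorded as a consequence of the order~$2$ property: this says directly that any initial block of rows $\vec{c}_{j,1},\ldots,\vec{c}_{j,a_j}$ with $\sum_j a_j \le m_h-t$ is linearly independent, so $\rho \ge \min(f,m_h-t)$ with no further case analysis. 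Thus the paper's route is a little cleaner at precisely the step you flagged as the main obstacle; your route avoids citing \eqref{min_weight} but ends up re-deriving the same linear-independence content from Definition~\ref{def_net}.
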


\begin{proof}
Let $\bsz \in \mathbb{N}_0^s$ be fixed. We count the number of $\bsell \in B(\bsb)$ such that $\lfloor \bsell \oplus 2^{\bsz + \nu(\bsell) - \bsone} \rfloor \in \mathcal{D}_{m_h,s}^\ast$. Let $C_1^{(2m_h \times m_h)},\ldots, C_s^{(2m_h \times m_h)}$ denote the generating matrices of the digital net and let $\vec{c}^{(h)}_{j,k}$ denote the $k$th row of $C_j^{(2m_h \times m_h)}$ for $1 \le k \le 2m_h$ and let $\vec{c}^{(h)}_{j,k} = \vec{0}$ for $k > 2m_h$. Let $\bsell = (\ell_{1},\ldots, \ell_{s})$. Assume that $\ell_{j} = 2^{w_{j,1}-1} + 2^{w_{j,2}-1} + \cdots + 2^{w_{j,r_{j}}-1}$, where $w_{j,1} > w_{j,2} > \cdots > w_{j,r_{j}} > 0$ and also that $\ell_{j} = \ell_{j,0} + 2 \ell_{j,1} + \cdots + \ell_{j,w_{j,1}-1} 2^{w_{j,1}-1}$, where $\ell_{j, w_{j,1}-1} = 1$. The condition $\lfloor \bsell \oplus 2^{\bsz + \nu(\bsell) - \bsone} \rfloor \in \mathcal{D}_{m_h,s}^\ast$ translates into the system of equations
\begin{eqnarray}\label{eq_sys}
\vec{c}_{1,1}^{(h)} \ell_{1,0}+\cdots +\vec{c}_{1,w_{1,1}-1}^{(h)}
\ell_{1,w_{1,1}-2}+  \vec{c}_{1,w_{1,1}}^{(h)} + && \nonumber \\
\vec{c}_{2,1}^{(h)} \ell_{2,0}+\cdots + \vec{c}_{2,w_{2,1}-1}^{(h)}
\ell_{2,w_{2,1} -2}+ \vec{c}_{2,w_{2,1}}^{(h)} + &&  \nonumber \\
\vdots && \nonumber \\
\vec{c}_{s,1}^{(h)} \ell_{s,0}+\cdots +\vec{c}_{s,w_{s,1}-1}^{(h)}
\ell_{s,w_{s,1}-2} + \vec{c}_{s,w_{s,1}}^{(h)} \hspace{0.3cm}& = & \vec{c}^{(h)},
\end{eqnarray}
where the vector $\vec{c}^{(h)}$ on the right hand side is fixed by $\bsz$. Consider now the set of vectors $\{ \vec{c}_{j,k}^{(h)}: 1 \le k < w_{j,1}, 1 \le j \le s\}$ from \eqref{eq_sys}. The minimum weight bound \eqref{min_weight} implies that at least $m_h - t + 1$ of those vectors are linearly independent. Thus at most $(\mu_1(\bsell) - m_h + t-1)_+$ of the $\ell_{j,k}$ can be chosen freely, the remaining ones need to be fixed in order to obtain a solution of \eqref{eq_sys}. Hence it follows that the number of solutions of the linear system \eqref{eq_sys} is at most
\begin{equation*}
2^{(\mu_1(\bsell) - m_h + t-1)_+} =  2^{(|\bsb|_1 - m_h + t-1)_+}  \le 2^{t-1} 2^{ (|\bsb|_1 - m_h)_+} \ll_s 2^{(|\bsb|_1 - m_h)_+},
\end{equation*}
where $(x)_+ = \max\{x, 0\}$ and where we used that $t$ depends only on $s$.  Using \eqref{zb_bound2} and the bound above we obtain
\begin{align*}
& \sum_{\bsell \in B(\bsb)} \sum_{\satop{\bsz \in \mathbb{N}_0^{s}}{ (\bsell \oplus 2^{\bsz + \nu(\bsell) - \bsone}) \in \mathcal{D}_{m_h,s}^\ast }} 2^{ -  |\bsz|_1} \ll_s  \sum_{\satop{\bsz\in \mathbb{N}_0^s}{|\bsz|_1 \ge 2m_h - t+1-2|\bsb|_1}} 2^{- |\bsz|_1 }  2^{(|\bsb|_1-m_h)_+}.
\end{align*}
If $|\bsb|_1 \ge m_h$, then the above sum is bounded by (using again that $t$ depends only on $s$)
\begin{equation}\label{bound_lsb1}
2^{|\bsb|_1 -m_h +t-1} \sum_{\bsz \in \mathbb{N}_0^s} 2^{-|\bsz|_1} = 2^{|\bsb|_1 -m_h+t-1+s} \ll_s 2^{|\bsb|_1 - m_h}.
\end{equation}
If $|\bsb|_1 < m_h$, then the above sum is bounded by
\begin{align}\label{bound_lsb2}
\sum_{\satop{\bsz\in \mathbb{N}_0^s}{|\bsz|_1 \ge 2m_h -t+1-2|\bsb|_1}} 2^{- |\bsz|_1}  \le & \sum_{a=2m_h -t+1-2|\bsb|_1}^\infty 2^{-a} {a+s-1\choose s-1} \nonumber \\ \ll_{s} & 2^{-2m_h  + 2|\bsb|_1} {2m_h -2|\bsb|_1 + s - t \choose s-1} \nonumber \\ \ll_s & 2^{-2m_h  + 2|\bsb|_1} {2m_h -2|\bsb|_1 + s \choose s-1},
\end{align}
where we set ${n \choose k} = 0$ for $n < k$ and where we again used that $t$ depends only on $s$.
\end{proof}

In the following lemma we show a bound on the expression in parenthesis in \eqref{expr_parenthesis}.
\begin{lemma}\label{lem_bound_sum}
Let $N \in \mathbb{N}$ with $N \ge 2$ and $N = 2^{m_1} + 2^{m_2} + \cdots + 2^{m_r}$, where $m_1 > m_2 > \cdots > m_r \ge 0$. Set $m_0 = \infty$ and $m_{r+1} = 0$. Then we have
\begin{align*}
\sum_{h=1}^r 2^{m_h - |\bsb|_1} \sum_{\bsell \in B(\bsb)} \sum_{\satop{ \bsz \in \mathbb{N}_0^{s}}{ \bsell \oplus \lfloor 2^{\bsz + \nu(\bsell) - \bsone } \rfloor  \in \mathcal{D}_{m_h,s}^\ast}} 2^{ - |\bsz|_1 } \ll_s r.
\end{align*}
\end{lemma}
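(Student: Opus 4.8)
The strategy is to split the sum over $h$ according to whether $|\bsb|_1 \ge m_h$ or $|\bsb|_1 < m_h$, and to apply the two estimates from Lemma~\ref{lem_aux} in each regime. Since $m_1 > m_2 > \cdots > m_r$, there is an index $h^\ast$ such that $m_h \le |\bsb|_1$ precisely for $h \ge h^\ast$ (and $m_h > |\bsb|_1$ for $h < h^\ast$). For the "large $|\bsb|_1$'' range $h \ge h^\ast$, the first bound in Lemma~\ref{lem_aux} gives
\begin{equation*}
\sum_{h \ge h^\ast} 2^{m_h - |\bsb|_1} \cdot 2^{|\bsb|_1 - m_h} = \sum_{h \ge h^\ast} 1 \le r,
\end{equation*}
which is already of the required order. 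So the real work is the "small $|\bsb|_1$'' range $h < h^\ast$, where $m_h > |\bsb|_1$.

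For that range, the second bound in Lemma~\ref{lem_aux} contributes
\begin{equation*}
\sum_{h < h^\ast} 2^{m_h - |\bsb|_1} \cdot 2^{-2m_h + 2|\bsb|_1} \binom{2m_h - 2|\bsb|_1 + s}{s-1} = \sum_{h < h^\ast} 2^{-(m_h - |\bsb|_1)} \binom{2(m_h - |\bsb|_1) + s}{s-1}.
\end{equation*}
Writing $d_h = m_h - |\bsb|_1 > 0$, this is $\sum_{h<h^\ast} 2^{-d_h} \binom{2 d_h + s}{s-1}$. The key observation is that the $m_h$ are \emph{strictly decreasing}, hence so are the $d_h$; in fact $d_h \ge d_{h^\ast - 1} + (h^\ast - 1 - h)$ for $h < h^\ast$, so each successive term's exponent increases by at least one as $h$ decreases. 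Thus I would bound the $h$-sum by a geometric-type series: since $2^{-d}\binom{2d+s}{s-1}$ is eventually decreasing in $d$ and the $d_h$ are distinct nonnegative integers,
\begin{equation*}
\sum_{h < h^\ast} 2^{-d_h} \binom{2 d_h + s}{s-1} \le \sum_{d=0}^\infty 2^{-d} \binom{2d + s}{s-1} \ll_s 1,
\end{equation*}
where the last estimate uses the standard inequality~\eqref{ineq_el} (with base $b$ essentially $2^{1/2}$ after pairing terms, or directly since $\binom{2d+s}{s-1}$ grows polynomially in $d$ while $2^{-d}$ decays geometrically). This gives a contribution $\ll_s 1 \le r$ for the small range as well. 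Adding the two ranges yields the claimed bound $\ll_s r$.

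\textbf{Main obstacle.} The only delicate point is justifying that the sum over the small-$|\bsb|_1$ range is $O_s(1)$ rather than $O_s(r)$: this relies crucially on the $m_h$ being \emph{strictly} decreasing, so that the values $d_h = m_h - |\bsb|_1$ are distinct nonnegative integers and the series $\sum_d 2^{-d}\binom{2d+s}{s-1}$ converges. If one only knew the $m_h$ were weakly decreasing (or had multiplicities), one would lose the geometric decay and could only salvage the weaker bound $\ll_s r$ directly term-by-term — which, incidentally, would still suffice here. A secondary bookkeeping point is handling the boundary index $h^\ast$ (where $m_{h^\ast} = |\bsb|_1$ exactly, or the transition) consistently; this is routine since both bounds in Lemma~\ref{lem_aux} agree up to $s$-dependent constants when $m_h \approx |\bsb|_1$. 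Everything else is a direct substitution of Lemma~\ref{lem_aux} followed by summing two convergent (or trivially $O(r)$) series.
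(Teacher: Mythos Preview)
Your proposal is correct and follows essentially the same approach as the paper: split the sum at the index $h_0$ (your $h^\ast$) where $m_h$ crosses $|\bsb|_1$, apply the two cases of Lemma~\ref{lem_aux}, bound the large-$|\bsb|_1$ part trivially by $r$, and for the small-$|\bsb|_1$ part use that the $m_h-|\bsb|_1$ are distinct positive integers to dominate by $\sum_{a\ge 1}2^{-a}\binom{2a+s}{s-1}\ll_s 1$. The paper's proof is exactly this argument.
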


\begin{proof}
For $\bsb \in \mathbb{N}_0^s$ let $1 \le h_0= h_0(|\bsb|_1) \le r+1$ be the integer which satisfies $m_{h_0-1} > |\bsb|_1 \ge m_{h_0}$.  Using Lemma~\ref{lem_aux} we obtain
\begin{align*}
& \sum_{h=1}^r 2^{m_h - |\bsb|_1} \sum_{\bsell \in B(\bsb)} \sum_{\satop{ \bsz \in \mathbb{N}_0^{s}}{ \bsell \oplus \lfloor 2^{\bsz + \nu(\bsell) - \bsone } \rfloor  \in \mathcal{D}_{m_h,s}^\ast}} 2^{ - |\bsz|_1 }  \\  \ll_s & \sum_{h=1}^{h_0-1} 2^{|\bsb|_1-m_h} {2m_h-2|\bsb|_1 + s \choose s-1} + \sum_{h=h_0}^{r}  1.
\end{align*}
We now estimate the sum over $1 \le h < h_0$, which is essentially a sum over $\{m_1 - |\bsb|_1, m_2 - |\bsb|_1, \ldots, m_{h_0(|\bsb|_1)} - |\bsb|_1\}$. We replace this set by $\mathbb{N}$, that is
\begin{equation*}
\sum_{h=1}^{h_0(|\bsb|_1)} 2^{|\bsb|-m_h} {2m_h-2|\bsb|_1 +s \choose s-1} \le \sum_{a=1}^\infty 2^{-a} {2a + s \choose s-1} \ll_s 1.
\end{equation*}
Thus the result follows.
\end{proof}

We can now obtain a bound on $\int |\sigma_{\bsb}|^q$.

\begin{lemma}\label{lem_bound_int_sigma}
Let $N \in \mathbb{N}$ with $N \ge 2$ have dyadic expansion $N = 2^{m_1} + 2^{m_2} + \cdots + 2^{m_r}$, where $m_1 > m_2 > \cdots > m_r \ge 0$. Then
\begin{align*}
\int_{[0,1]^s} |\sigma_{\bsb}(\bstheta) |^{q} \,\mathrm{d} \bstheta \ll_{s} & \frac{r^{q-1} }{N^q} \sum_{h=1}^r 2^{m_h-|\bsb|_1- 2(m_h-|\bsb|_1)_+} {2(m_h-|\bsb|_1)_+ + s-1 \choose s-1}.
\end{align*}
\end{lemma}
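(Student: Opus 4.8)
The plan is to assemble the bound directly from the estimates already in hand, so this lemma is essentially bookkeeping on top of Lemmas~\ref{lem_c_bound}, \ref{lem_aux} and \ref{lem_bound_sum}. By \eqref{bound_sumb} the integral $\int_{[0,1]^s}|\sigma_{\bsb}(\bstheta)|^q\,\rd\bstheta$ equals $\sum_{\bsell_1,\ldots,\bsell_{q-1}\in B(\bsb)} c(\bsell_1\oplus\cdots\oplus\bsell_{q-1})\prod_{i=1}^{q-1}c(\bsell_i)$. Since $q$ is even, $\bsell_1\oplus\cdots\oplus\bsell_{q-1}$ again lies in $B(\bsb)$, so Lemma~\ref{lem_c_bound} applies to that argument and gives \eqref{cl_est2}; the key point is that this bound depends only on $\bsb$, not on the individual $\bsell_i$. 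I would therefore pull that factor out of the sum and bound what remains by $\bigl(\sum_{\bsell\in B(\bsb)}|c(\bsell)|\bigr)^{q-1}$, then insert the expansion of $|c(\bsell)|$ from \eqref{cl_est}. This is precisely the chain of inequalities already recorded in \eqref{expr_parenthesis}.

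It then remains only to control the factor $\bigl(\sum_{h=1}^r 2^{m_h-|\bsb|_1}\sum_{\bsell\in B(\bsb)}\sum_{\bsz}2^{-|\bsz|_1}\bigr)^{q-1}$ appearing on the last line of \eqref{expr_parenthesis}, where the innermost sum runs over $\bsz\in\mathbb{N}_0^s$ with $\bsell\oplus\lfloor 2^{\bsz+\nu(\bsell)-\bsone}\rfloor\in\mathcal{D}_{m_h,s}^\ast$. But Lemma~\ref{lem_bound_sum} states exactly that the expression inside the parentheses is $\ll_s r$, hence its $(q-1)$th power is $\ll_s r^{q-1}$. Substituting this back into \eqref{expr_parenthesis} yields $\int_{[0,1]^s}|\sigma_{\bsb}(\bstheta)|^q\,\rd\bstheta\ll_s \frac{r^{q-1}}{N^q}\sum_{h=1}^r 2^{m_h-|\bsb|_1-2(m_h-|\bsb|_1)_+}{2(m_h-|\bsb|_1)_+ + s-1 \choose s-1}$, which is the claim.

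I do not expect a genuine obstacle here, since all the analytic estimation has been carried out in the preceding lemmas; the only points requiring a little care are (i) the observation that $\bsell_1\oplus\cdots\oplus\bsell_{q-1}\in B(\bsb)$ for even $q$, which is what licenses the use of Lemma~\ref{lem_c_bound} for that argument, and (ii) the uniformity of the bound \eqref{cl_est2} in $\bsell_1,\ldots,\bsell_{q-1}$, which is what makes the factorization of the $(q-1)$-fold sum into $\bigl(\sum_{\bsell\in B(\bsb)}|c(\bsell)|\bigr)^{q-1}$ legitimate. With these two facts in place the lemma follows by direct substitution, and one should also note in passing that the implied constant absorbs the dependence on $t$, which in turn depends only on $s$.
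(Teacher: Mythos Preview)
Your proposal is correct and follows exactly the approach in the paper, which states simply that the result follows by combining \eqref{bound_sumb}, \eqref{expr_parenthesis} and Lemma~\ref{lem_bound_sum}. You have merely spelled out this one-line proof in more detail, including the two points of care (that $\bsell_1\oplus\cdots\oplus\bsell_{q-1}\in B(\bsb)$ for even $q$, and the uniformity of \eqref{cl_est2} in the $\bsell_i$) that the paper already recorded in the text surrounding \eqref{cl_est2} and \eqref{expr_parenthesis}.
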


\begin{proof}
The result follows by combining \eqref{bound_sumb}, \eqref{expr_parenthesis} and Lemma~\ref{lem_bound_sum}.
\end{proof}

The following lemma completes the proof of Theorem~\ref{thm2}.

\begin{lemma}\label{lem_Lq_bound_weak}
For any $N \in \mathbb{N}$ with $N \ge 2$ and $N = 2^{m_1} +2^{m_2} + \cdots + 2^{m_r}$, where $m_1 > m_2 > \cdots > m_r \ge 0$, the first $N$ points $\cP_{N,s}$ of the sequence $\cS_s$ satisfy
\begin{equation*}
\mathcal{L}_q(\cP_{N,s}) \ll_{s, q} \frac{r^{3/2-1/q} }{N} \sqrt{\sum_{v=1}^r m_v^{s-1}},
\end{equation*}
for all even integers $q$ with $2 \le q < \infty$.
\end{lemma}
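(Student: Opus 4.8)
The plan is to combine Lemma~\ref{lem_1} and Lemma~\ref{lem_bound_int_sigma} and then reduce everything to elementary one–dimensional geometric sums. By Lemma~\ref{lem_1} we have $\mathcal{L}_q^2(\cP_{N,s}) \ll_{q,s} \sum_{\bsb \in \mathbb{N}_0^s}\left(\int_{[0,1]^s}|\sigma_{\bsb}(\bstheta)|^q\,\mathrm{d}\bstheta\right)^{2/q}$, while Lemma~\ref{lem_bound_int_sigma} bounds the inner integral by $\frac{r^{q-1}}{N^q}\sum_{h=1}^r a_h(\bsb)$, where I abbreviate $a_h(\bsb) := 2^{m_h-|\bsb|_1-2(m_h-|\bsb|_1)_+}\binom{2(m_h-|\bsb|_1)_+ + s-1}{s-1}$. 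Since $q \ge 2$ the exponent $2/q$ lies in $(0,1]$, so $t \mapsto t^{2/q}$ is subadditive on $[0,\infty)$; this lets me bound $\left(\sum_h a_h(\bsb)\right)^{2/q} \le \sum_h a_h(\bsb)^{2/q}$ and then legitimately interchange the sums over $\bsb$ and $h$, which yields $\mathcal{L}_q^2(\cP_{N,s}) \ll_{q,s} r^{2-2/q}N^{-2}\sum_{h=1}^r \sum_{\bsb\in\mathbb{N}_0^s}a_h(\bsb)^{2/q}$.

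The heart of the matter is then the claim that, for each fixed $h$, $\sum_{\bsb \in \mathbb{N}_0^s} a_h(\bsb)^{2/q} \ll_{q,s} (1+m_h)^{s-1}$. To prove it I would group the vectors $\bsb$ by the value $b = |\bsb|_1$, using that there are $\binom{b+s-1}{s-1} \ll_s (1+b)^{s-1}$ of them, and split according to whether $b \ge m_h$ or $b < m_h$. In the first range $a_h(\bsb) = 2^{m_h-b}$, and after the substitution $b = m_h+a$ one is left with $\sum_{a\ge0}(1+m_h+a)^{s-1}2^{-2a/q}$, which is $\ll_{q,s}(1+m_h)^{s-1}$ after expanding the power (convexity) and summing the resulting geometric-type series. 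In the second range $a_h(\bsb) = 2^{-(m_h-b)}\binom{2(m_h-b)+s-1}{s-1}$, and writing $a = m_h-b \ge 1$, using $\binom{2a+s-1}{s-1}^{2/q}\ll_s a^{s-1}$ and $(1+b)^{s-1}\le(1+m_h)^{s-1}$, the sum is dominated by $(1+m_h)^{s-1}\sum_{a\ge1}a^{s-1}2^{-2a/q}\ll_{q,s}(1+m_h)^{s-1}$. Inequality~\eqref{ineq_el} is the convenient tool for the binomial sums that arise here.

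Finally I would assemble the pieces: the above gives $\mathcal{L}_q^2(\cP_{N,s}) \ll_{q,s} r^{2-2/q}N^{-2}\sum_{h=1}^r(1+m_h)^{s-1}$, and since $(1+m_h)^{s-1}\ll_s 1 + m_h^{s-1}$ this is $\ll_{q,s} r^{2-2/q}N^{-2}\bigl(r + \sum_{v=1}^r m_v^{s-1}\bigr)$. Here I would note that $\sum_{v=1}^r m_v^{s-1}\ge 1$: for $s=1$ it equals $r\ge1$, and for $s\ge2$ one has $m_1\ge\max(1,r-1)\ge1$ (the case $r=1$ uses $N\ge2$), so $m_1^{s-1}\ge1$. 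Hence $r + \sum_v m_v^{s-1}\ll r\sum_v m_v^{s-1}$, so $\mathcal{L}_q^2(\cP_{N,s})\ll_{q,s} r^{3-2/q}N^{-2}\sum_{v=1}^r m_v^{s-1}$, and taking square roots completes the proof. I do not expect a deep obstacle: given the earlier lemmas the work is essentially bookkeeping of the powers of $r$ together with the estimate $\sum_{\bsb}a_h(\bsb)^{2/q}\ll_{q,s}(1+m_h)^{s-1}$. The one genuinely essential structural point is $q\ge2$, which makes $2/q\le1$ and hence the subadditivity step legal; without it one could not pull the sum over $\bsb$ through the power and through the sum over $h$, and the clean reduction to one–dimensional series would break down. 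The only other mild care needed is the verification $\sum_v m_v^{s-1}\ge1$ in the boundary cases $r=1$ and $m_r=0$.
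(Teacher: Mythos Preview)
Your proof is correct and uses the same ingredients as the paper---Lemma~\ref{lem_1}, Lemma~\ref{lem_bound_int_sigma}, the subadditivity of $t\mapsto t^{2/q}$ for $q\ge 2$, grouping by $|\bsb|_1$, and the geometric estimate~\eqref{ineq_el}---but your organization is cleaner. The paper first groups by $a=|\bsb|_1$, splits at $a=m_1$, applies subadditivity only on the range $a<m_1$, and then further decomposes that range into the intervals $m_{v+1}\le a<m_v$, estimating each pair $(h,v)$ separately according to whether $v\ge h$ or $v<h$; this produces the bound $r\sum_v m_v^{s-1}$ after a double sum. You instead apply subadditivity at once to the entire sum over $h$, swap with the sum over $\bsb$, and for each fixed $h$ obtain the per-block estimate $\sum_{\bsb}a_h(\bsb)^{2/q}\ll_{q,s}(1+m_h)^{s-1}$ directly by a single split at $b=m_h$. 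This avoids the interval decomposition and the $(h,v)$ case analysis entirely, at the cost of the harmless end-step $r+\sum_v m_v^{s-1}\ll r\sum_v m_v^{s-1}$ (which you justify correctly via $\sum_v m_v^{s-1}\ge 1$). Both routes land on the same final bound; yours is shorter.
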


\begin{proof}
Using Lemma~\ref{lem_1} and Lemma~\ref{lem_bound_int_sigma} we obtain
\begin{align}\label{ineq_Lq_bound}
\mathcal{L}_q^2(\cP_{N,s}) \ll_{s} & c_{q,s} \frac{r^{2(1-1/q)} }{N^2} \sum_{\bsb \in \mathbb{N}_0^s} \left(\sum_{h=1}^r 2^{m_h-|\bsb|_1- 2(m_h-|\bsb|_1)_+} {2(m_h-|\bsb|_1)_+ + s-1 \choose s-1} \right)^{2/q}.
\end{align}

It remains to estimate the sum over $\bsb$. We have
\begin{align*}
& \sum_{\bsb \in \mathbb{N}_0^s} \left(\sum_{h=1}^r 2^{m_h-|\bsb|_1 - 2(m_h-|\bsb|_1)_+} {2(m_h-|\bsb|_1)_+ + s-1 \choose s-1} \right)^{2/q} \\ = & \sum_{a=0}^{\infty} {a+s-1 \choose s-1} \left( \sum_{h=1}^r 2^{m_h-a-2(m_h-a)_+} {2 (m_h-a)_+ + s-1 \choose s-1} \right)^{2/q}.
\end{align*}

We split the above sum into the part where $a \ge m_1$ and where $0 \le a < m_1$. For $a \ge m_1$ we have ${2(m_h-a)_+ + s-1 \choose s-1} = 1$ and
\begin{equation*}
\sum_{h=1}^r 2^{m_h-a- 2(m_h-a)_+} = \sum_{h=1}^r 2^{m_h-a} = 2^{-a} \sum_{h=1}^r 2^{m_h} = 2^{-a} N.
\end{equation*}
Thus we can use \eqref{ineq_el} to obtain
\begin{align*}
& \sum_{a=m_1}^{\infty} {a+s-1 \choose s-1} \left( \sum_{h=1}^r 2^{m_h-a-2(m_h-a)_+} {2 (m_h-a)_+ + s-1 \choose s-1} \right)^{2/q} \\ \le & \sum_{a=m_1 }^\infty N^{2/q} 2^{-2a/q} {a+s-1 \choose s-1} \\ \ll_s & m_1^{s-1}.
\end{align*}

For $0 \le a < m_1$ we use Jensen's inequality, which states that for a sequence of nonnegative real numbers $(a_j)$ and any real number $0 < \lambda \le 1$ we have $(\sum_j a_j)^\lambda \le \sum_j a_j^\lambda$. Since $2/q \le 1$ we have
\begin{align*}
& \sum_{a=0}^{m_1-1} {a+s-1 \choose s-1} \left( \sum_{h=1}^r 2^{m_h-a- 2(m_h-a)_+} {2(m_h-a)_+ + s-1 \choose s-1}  \right)^{2/q} \\ \le  &  \sum_{h=1}^r  \sum_{a=0}^{m_1-1} {a+s-1 \choose s-1} 2^{\frac{2}{q} [m_h-a- 2(m_h-a)_+ ] } {2(m_h-a)_+ + s-1 \choose s-1}^{2/q}.
\end{align*}
We split the sum over $a$ into the parts $m_{v+1} \le a < m_v$. Let $m_{r+1} = 0$. Thus the above sum can be written as
\begin{equation*}
\sum_{h=1}^r \sum_{v=1}^r  \sum_{a=m_{v+1}}^{m_v-1} {a+s-1 \choose s-1} 2^{\frac{2}{q} [m_h-a- 2(m_h-a)_+ ] } {2(m_h-a)_+ + s-1 \choose s-1}^{2/q}.
\end{equation*}
For $v \ge h$ we have $a \le m_h$. We can use \eqref{ineq_el} again to deduce
\begin{align*}
& \sum_{a=m_{v+1}}^{m_v-1} {a+s-1 \choose s-1} 2^{\frac{2}{q} [m_h-a- 2(m_h-a)_+ ] } {2(m_h-a)_+ + s-1 \choose s-1}^{2/q} \\ \ll_s & m_v^{s-1} \sum_{a=m_{v+1}}^{m_v-1} 2^{\frac{2}{q} [- (m_h-a) ] } {2(m_h-a) + s-1 \choose s-1}^{2/q} \\ \le & m_v^{s-1} \sum_{c=0}^\infty 2^{-2c/q} {2c + s-1 \choose s-1}^{2/q} \\ \ll_{s,q} & m_v^{s-1}.
\end{align*}

For $v < h$  we have $a > m_h$. We can use \eqref{ineq_el} again to deduce
\begin{align*}
& \sum_{a=m_{v+1}}^{m_v-1} {a+s-1 \choose s-1} 2^{\frac{2}{q} [m_h-a- 2(m_h-a)_+ ] } {2(m_h-a)_+ + s-1 \choose s-1}^{2/q} \\ \le & \sum_{a=m_{v+1}}^{m_v-1} {a+s-1 \choose s-1} 2^{\frac{2}{q} [m_h-a] } \\ \le & \sum_{a=m_{v+1}}^\infty 2^{\frac{2}{q} [m_h-a]} {a+s-1 \choose s-1} \\ \ll_s & 2^{\frac{2}{q} [m_h-m_{v+1}]} {m_{v+1} + s-1 \choose s-1}.
\end{align*}
Thus we have
\begin{align*}
& \sum_{a=0}^{m_1-1} {a+s-1 \choose s-1} \left( \sum_{h=1}^r 2^{m_h-a- 2(m_h-a)_+} {2(m_h-a)_+ + s-1 \choose s-1}  \right)^{2/q} \\ \ll_{s,q}  & \sum_{h=1}^r \left(\sum_{v=h}^r m_v^{s-1} + \sum_{v=1}^{h-1} 2^{\frac{2}{q} [m_h-m_{v+1}]} {m_{v+1} + s-1 \choose s-1} \right) \\ \le & r \sum_{v=1}^r m_v^{s-1} + \sum_{h=1}^r \sum_{c=0}^\infty 2^{-\frac{2}{q} c} {c+m_h+s-1 \choose s-1} \\ \ll_s & r \sum_{v=1}^r m_v^{s-1}.
\end{align*}
Substituting this result into \eqref{ineq_Lq_bound} we obtain
\begin{equation*}
\mathcal{L}_q^2(\cP_{N,s}) \ll_{s,q} \frac{r^{3-2/q}}{N^2} \sum_{v=1}^r m_v^{s-1},
\end{equation*}
which implies the result.
\end{proof}

\begin{remark}
Assume that $q$ is an even integer with $2 \le q < \infty$. A slight improvement of \eqref{Lq_seq} can be obtained using the interpolation of norms
\begin{equation*}
\mathcal{L}_z(\cP_{N,s}) \le (\mathcal{L}_2(\cP_{N,s}) )^\theta (\mathcal{L}_q(\cP_{N,s}) )^{1-\theta},
\end{equation*}
where $0 \le \theta \le 1$ and $$\frac{1}{z} = \frac{\theta}{2} + \frac{1-\theta}{q}.$$ This elementary inequality can be shown by applying H\"older's inequality to $\int |\delta(\cP_{N,s}, \cdot)|^{z \theta} |\delta(\cP_{N,s}, \cdot)|^{z (1-\theta)}$. Since the results of this paper also apply to the sequences constructed in \cite{DP12}, we obtain that one can explicitly construct infinite-dimensional sequences for which the projection onto the first $s$ coordinates of the first $N$ points satisfies
\begin{equation*}
\mathcal{L}_z(\cP_{N,s}) \ll_{z,s} \frac{(\log N)^{s/2 + 3/2-1/z}}{N}
\end{equation*}
for all $N \ge 2$ and $2 \le z < \infty$. Note that $2 \le z < \infty$ can be any real number.
\end{remark}

\section{Acknowledgements}

The author acknowledges the valuable discussions with F. Pillichshammer and H. Niederreiter which led to the results in this paper. The comments and suggestions by the anonymous referee improving the exposition of the paper are greatly appreciated. The author is supported by an Australian Research Council Queen Elizabeth 2 Fellowship.

\vspace{2cm}
\noindent{\bf Author's Address:}\\

\noindent Josef Dick, School of Mathematics and Statistics, The University of New South Wales, Sydney 2052, Australia.  Email: josef.dick@unsw.edu.au \\

\end{document}